\newtheorem{theorem}{Theorem}[section]
\newtheorem{corollary}[theorem]{Corollary}
\newtheorem{lemma}[theorem]{Lemma}
\newtheorem{proposition}[theorem]{Proposition}
\theoremstyle{Theorem}
\newtheorem{remark}[theorem]{Remark}
\numberwithin{equation}{section}
\newcommand\numberthis{\addtocounter{equation}{1}\tag{\theequation}}
\DeclareMathOperator\re{Re}
\DeclareMathOperator{\ord}{ord}
\DeclareMathOperator{\Gal}{Gal}
\DeclareMathOperator{\Esp}{\mathrm{E}}
\DeclareRobustCommand{\stirling}{\genfrac\{\}{0pt}{}} 
\newcommand{\Fq}{\mathbb{F}_q}
\newcommand{\Fp}{\mathbb{F}_p}
\newcommand{\Q}{\mathbb{Q}}
\newcommand{\R}{\mathbb{R}}
\newcommand{\C}{\mathbb{C}}
\newcommand{\calH}{\mathcal{H}}
\newcommand{\calL}{\mathcal{L}}
\newcommand{\calM}{\mathcal{M}}
\newcommand{\calO}{\mathcal{O}}
\newcommand{\calP}{\mathcal{P}}
\newcommand{\rand}{\mathrm{rand}} 
\newcommand{\prob}{\mathbb{P}} 
\newcommand{\pbx}{\mathbb{X}} 
\newcommand{\pby}{\mathbb{Y}} 
\newcommand{\pbw}{\mathbb{W}} 
\newcommand{\Tag}{\mathrm{Tag}}
\newcommand{\St}{\mathrm{St}}
\begin{document}

	\baselineskip=17pt
	
	
	\title{Quadratic Euler-Kronecker constants in positive characteristic}
	\author{Amir Akbary}
	\address{Department of Mathematics and Computer Science, University of Lethbridge, 4401 University Drive West, Lethbridge, T1K 3M4, Alberta, Canada}
	\email{amir.akbary@uleth.ca}
	\author{F\'elix Baril Boudreau}
	\address{Département de Mathématiques, Université du Luxembourg, Maison du nombre, 6, avenue de la Fonte, L-4364 Esch-sur-Alzette}
	\email{felix.barilboudreau@uni.lu}
	\thanks{Amir Akbary was supported by the NSERC discovery grant RGPIN-2021-02952. The project started when F\'elix Baril Boudreau was supported by a PIMS postdoctoral fellowship at the University of Lethbridge. This research was also funded in part by the Luxembourg National Research Fund (FNR) grant reference 501100001866-17921905. For the purpose of open access, and in fulfilment of the obligations arising from the grant agreement, the authors have applied a Creative Commons Attribution 4.0 International (CC BY 4.0) license to any Author Accepted Manuscript version arising from this submission.}
	
	\begin{abstract}
		\par In 2006, Ihara defined and systematically studied a generalization of the Euler-Mascheroni constant for all global fields, named the Euler-Kronecker constants. This paper examines their distribution across geometric quadratic extensions of a rational global function field, via the values of logarithmic derivatives of Dirichlet $L$-functions at 1.  Using a probabilistic model, we show that the values converge to a limiting distribution with a smooth, positive density function, as the genii of quadratic fields approach infinity. We then prove a discrepancy theorem for the convergence of the frequency of these values, and obtain information about the proportion of the small values. Finally, we prove omega results on the extreme values. Our theorems imply new distribution results on the stable Taguchi heights and logarithmic Weil heights of rank 2 Drinfeld modules with CM.
	\end{abstract}

	\subjclass[2020]{11R59, 11R58, 11G09}
	
	\keywords{Global function field, Euler-Kronecker constant, Drinfeld module, Taguchi height}
	
	\maketitle

	\section{Introduction}
	
In \cite{ihara_2006}, Ihara defines the Euler-Kronecker constant of a global field $F$ as the limit $$
\gamma_F = \lim_{s \to 1}\left( \frac{\zeta'(s,F)}{\zeta(s,F)} + \frac{1}{s-1} \right), $$
where $\zeta(s,F)$ is the Dedekind zeta function of $F$ in the complex variable $s$ (see also \cite{Hashimoto_Iijima_Kurokawa_Masato_2004}*{pp.496-497} for the number field case). If $F$ is a global function field, i.e., $F$ is the function field of a proper, smooth, and geometrically connected curve $C$ defined over a finite field $\Fq$,  it is then shown in \cite{ihara_2006}*{(1.4.3)}) that \begin{equation}\label{Ihara_Rational_Points}  \frac{\gamma_F}{\log{q}}= \sum_{m=1}^{\infty} \left(\frac{q^m+1-N_m}{q^m}\right)+\frac{q-3}{2(q-1)}, \end{equation}  where $N_m$ denotes the number of $\mathbb{F}_{q^m}$-rational points of $C$. This formula allows explicit computation of $\gamma_F$. For instance, when $F = \Fq(t)$, then $N_m = q^m + 1$ for all $m \geq 1$, hence $\gamma_q := \gamma_F = (q-3)\log(q)/(2(q-1))$. Now, consider a supersingular elliptic curve $E/\Fp$ for a prime $p \geq 5$ with Weierstrass equation $y^2 = D$ for some monic $D$ in $\Fp[t]$. For each integer $m \geq 1$, $N_m$ equals $p^m + 1$ if $m$ is odd, and $\left(p^{\frac{m}{2}} - (-1)^{\frac{m}{2}}\right)^2$ if $m$ is even (see \cite{silverman_2009}*{Exercise 5.15}). Therefore, if $F$ is the function field of $E/\mathbb{F}_p$, then $$   \gamma_F = \left( \sum_{m=1}^\infty \left( \frac{p^m + 1 - N_m}{p^m}\right) + 1 - \frac{p+1}{2(p-1)} \right) \log(p) = \frac{p-5}{2(p+1)}.
$$

More generally, Ihara observes from \eqref{Ihara_Rational_Points} that if $N_m$ is large for small $m$ (in particular for $m = 1$), then $\gamma_F$ tends to be negative (see also \cite{ihara_2006}*{Theorem 4}). Ihara puts forward $\gamma_F$ as an invariant of $F$ and investigates its cardinality. If $F$ has genus $g_F$, then one has (see \cite{ihara_2006}*{Theorem 1 and Proposition 3})
		\begin{equation}\label{bound1}
			-\log\left( q^{g_F-1}\right)+O_q(1) \leq \gamma_F\leq 2\log\left(\log\left(q^{g_F-1}\right)\right)+O_q(1).
	\end{equation}
Moreover, for an infinite family of global function fields $F$ of unbounded genii $g_F$ and fixed degree $N \geq 1$ over a given rational function field $\Fq(t)$, Ihara shows \cite{ihara_2006}*{Corollary 2}
		\begin{equation}
			\label{bound2}
			\gamma_F>-2(N-1+\varepsilon)\log\left(\log\left(q^{g_F-1}\right)\right),
		\end{equation}
		for any real number $\varepsilon>0$.
	
	Let $q$ be a power of an odd prime number. Let $\Fq[t]$ be the ring of polynomials in the variable $t$ with coefficients in $\Fq$. For each integer $n \geq 1$, let $\calH_n$ be the subset  of degree $n$ square-free monic polynomials in $\Fq[t]$. In this paper, we study the magnitude of $\gamma_D $, the Euler-Kronecker constant of $K_D := \Fq(t)(\sqrt{D})$, the function field of the hyperelliptic curve of affine model $y^2=D$ as $D$ varies in the set $\calH_n$. From the Hurwitz genus formula, the genus of $K_D$ is
	\begin{equation}\label{Equation_Genus_gD}
		g_D = \begin{cases}
			\frac{n}{2} - 1 &\text{ if } n \text{ is even},\\
			\frac{n-1}{2} &\text{ if } n \text{ is odd}.
		\end{cases}
	\end{equation}
	
	Now, let $\chi_D$ be the quadratic Dirichlet character associated with a given $D \in \calH_n$ and let $L(s,\chi_D)$ be the corresponding Dirichlet $L$-function. Its logarithmic derivative at $s = 1$ satisfies the relation
	\begin{equation}\label{Quadratic_Euler_Kronecker-0}
		\gamma_D=\frac{L'(1,\chi_D)}{L(1,\chi_D)} + \widetilde{\gamma_q},   \end{equation}   where  $$
	\widetilde{\gamma_q} =    \begin{cases}     \gamma_q &\text{if } n \text{ is odd},\\     \gamma_q+\zeta_q(2)&\text{if } n \text{ is even},\\    \end{cases} $$
	with $\zeta_q(2) := \zeta(2,\Fq[t]) = q/(q-1)$. (See Subsection \ref{Projective_Setting} for a proof of \eqref{Quadratic_Euler_Kronecker-0}.) As we see from \eqref{Quadratic_Euler_Kronecker-0}, the magnitudes of $\gamma_D$ and $L'(1,\chi_D)/L(1,\chi_D)$ are intimately related. From \cite{ihara_2008}*{(6.8.20)}, we have
	\begin{equation}
		\label{upper bound}
		\left| \frac{L'(1,\chi_D)}{L(1,\chi_D)}\right|\ll_q \log\left( \log{q^n} \right),
	\end{equation}
	and so $\left|\gamma_D \right|\ll_q\log\left( \log{q^n} \right)$.

	In this paper, we show that for a proportion of polynomials $D$ in $\calH_n$ the values of $| {L'(1,\chi_D)}/{L(1,\chi_D)}|$ are significantly smaller than the upper bound in \eqref{upper bound}.
	
	\begin{theorem}\label{Theorem_Application_Small_Absolute_Value}
		For $n \geq 1$, we have
		$$
		\min\limits_{D \in \calH_n} \left( \left|\frac{L'(1,\chi_D)}{L(1,\chi_D)}\right| \right) \ll_q \frac{ \log^2(\log(q^n))}{\log(q^n)}.
		$$
		More precisely, there are $\gg_q q^n \log^2(\log(q^n))/\log(q^n)$ polynomials $D \in \calH_n$ for which
		$$
		\left| \gamma_D-\widetilde{\gamma}_q\right|= \left |\frac{L'(1,\chi_D)}{L(1,\chi_D)} \right|\ll_q \frac{ \log^2(\log(q^n))}{\log(q^n)}.
		$$
	\end{theorem}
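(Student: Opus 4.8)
The plan is to derive Theorem~\ref{Theorem_Application_Small_Absolute_Value} from the two principal results of the paper. The first is the \emph{limiting distribution theorem}: as the genus $g_D$ grows, the family $\{L'(1,\chi_D)/L(1,\chi_D):D\in\calH_n\}$ converges in distribution to a law $\nu$ whose distribution function $\Phi$ has a density $\phi$ that is smooth and strictly positive on $\R$. The second is the \emph{discrepancy theorem}, which makes this convergence effective by bounding
\[
\mathcal D_n:=\sup_{x\in\R}\bigl|F_n(x)-\Phi(x)\bigr|,\qquad F_n(x):=\frac{1}{|\calH_n|}\#\Bigl\{D\in\calH_n:\tfrac{L'(1,\chi_D)}{L(1,\chi_D)}\le x\Bigr\},
\]
by a quantity that is $o(\epsilon_n)$, where $\epsilon_n:=\log^2(\log q^n)/\log(q^n)$; one anticipates $\mathcal D_n\ll_q 1/\log(q^n)$, possibly up to a factor of $\log\log q^n$. (If $\nu$ is obtained along the even and odd subsequences of $n$ separately, what follows applies verbatim to each.) I would start from the identity
\[
\#\bigl\{D\in\calH_n:|L'(1,\chi_D)/L(1,\chi_D)|\le\epsilon_n\bigr\}=|\calH_n|\,\bigl(F_n(\epsilon_n)-F_n(-\epsilon_n^-)\bigr),
\]
where $F_n(y^-)$ is the left limit, i.e.\ $\#\{D\in\calH_n:L'(1,\chi_D)/L(1,\chi_D)<y\}/|\calH_n|$.

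Next I would bound the right-hand bracket from below. As $\Phi$ is continuous, $|F_n(-\epsilon_n^-)-\Phi(-\epsilon_n)|\le\mathcal D_n$, so
\[
F_n(\epsilon_n)-F_n(-\epsilon_n^-)\ \ge\ \Phi(\epsilon_n)-\Phi(-\epsilon_n)-2\mathcal D_n\ =\ \int_{-\epsilon_n}^{\epsilon_n}\phi(t)\,dt\ -\ 2\mathcal D_n.
\]
Since $\phi$ is continuous, $\phi(0)>0$, and $\epsilon_n\to0$, there is $n_0=n_0(q)$ with $\phi(t)\ge\tfrac12\phi(0)$ whenever $|t|\le\epsilon_n$ and $n\ge n_0$, so the integral is $\ge\phi(0)\epsilon_n$. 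As $\mathcal D_n=o(\epsilon_n)$, the bracket is $\ge\tfrac12\phi(0)\epsilon_n$ for $n$ large; combined with $|\calH_n|\gg_q q^n$ this yields
\[
\#\Bigl\{D\in\calH_n:\bigl|L'(1,\chi_D)/L(1,\chi_D)\bigr|\le\epsilon_n\Bigr\}\ \gg_q\ q^n\,\frac{\log^2(\log q^n)}{\log(q^n)}.
\]
By \eqref{Quadratic_Euler_Kronecker-0} this set is exactly $\{D\in\calH_n:|\gamma_D-\widetilde{\gamma}_q|\le\epsilon_n\}$, which is the second assertion of the theorem; being nonempty it forces $\min_{D\in\calH_n}|L'(1,\chi_D)/L(1,\chi_D)|\le\epsilon_n$, the first assertion. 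The finitely many $n<n_0$ are absorbed into the implied constants.

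The substance of the argument is concentrated in the discrepancy theorem; the one delicate point in the deduction itself is the calibration of $\epsilon_n$, which must be \emph{large enough} to dominate $\mathcal D_n$ (so that the tiny interval $[-\epsilon_n,\epsilon_n]$ still captures a proportion of $\calH_n$ comparable to $\nu([-\epsilon_n,\epsilon_n])$) and \emph{small enough} to tend to $0$ (so that continuity of $\phi$ lets one replace $\nu([-\epsilon_n,\epsilon_n])$ by $\asymp\phi(0)\epsilon_n$); $\epsilon_n=\log^2(\log q^n)/\log(q^n)$ does both, since $\epsilon_n\to0$ while $\epsilon_n/\mathcal D_n\to\infty$. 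I do not expect a shortcut that avoids the distribution theory: combining the bound \eqref{upper bound} with the explicit formula $L'(1,\chi_D)/L(1,\chi_D)=-\log q\sum_{m\ge1}q^{-m}\psi(m,\chi_D)$, where $\psi(m,\chi_D)=\sum_{\deg f=m}\Lambda(f)\chi_D(f)$, and the Weil bound $|\psi(m,\chi_D)|\le(2g_D+1)q^{m/2}$ does let one truncate at $M\asymp\log_q n$ with tail $\ll_q nq^{-M/2}$, but forcing the head $\sum_{\deg f\le M}\Lambda(f)\chi_D(f)/|f|$ to be as small as $\epsilon_n$ for $\gg_q q^n\epsilon_n$ polynomials $D$ still requires exactly the probabilistic model and equidistribution estimates underpinning the paper's main theorems.
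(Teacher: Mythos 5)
Your approach is the same as the paper's: use the discrepancy estimate (Proposition~\ref{Proposition_Discrepancy}) and the positivity of the limiting density (Proposition~\ref{Proposition_Positive_Density}), then choose $\varepsilon_n$ so that the interval $[-\varepsilon_n,\varepsilon_n]$ captures a proportion $\gg\varepsilon_n$ of the $D\in\calH_n$. The only problem is in the calibration step, and it is a real gap in your argument even though the conclusion survives.

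You anticipate $\mathcal D_n\ll_q 1/\log(q^n)$, possibly up to one factor of $\log\log(q^n)$, and then use ``$\mathcal D_n=o(\varepsilon_n)$'' to absorb the error $2\mathcal D_n$ into half the main term $\phi(0)\varepsilon_n$. But Proposition~\ref{Proposition_Discrepancy} only yields
\[
\mathcal D_n\ \ll_q\ \frac{\log^2(\log(q^n))}{\log(q^n)},
\]
which is of the \emph{same} order as $\varepsilon_n=\log^2(\log(q^n))/\log(q^n)$, not $o(\varepsilon_n)$. With that bound, ``$\Phi(\varepsilon_n)-\Phi(-\varepsilon_n)-2\mathcal D_n\ge\tfrac12\phi(0)\varepsilon_n$'' does not follow; the implied constant in the discrepancy bound could overwhelm $\phi(0)$. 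The paper avoids this by inserting a large constant: it sets $\varepsilon_n=C_q\log^2(\log(q^n))/\log(q^n)$ for a constant $C_q>0$ chosen large enough (in terms of $q$ alone) that $\int_{-\varepsilon_n}^{\varepsilon_n}M_\rand(y)\,dy\gg_q C_q\log^2(\log(q^n))/\log(q^n)$ dominates the $O_q(\log^2(\log(q^n))/\log(q^n))$ error from Proposition~\ref{Proposition_Discrepancy}, whose implied constant is independent of $C_q$. Since the final statement is up to $\ll_q$ and $\gg_q$, the extra factor $C_q$ is harmless. Once you replace ``$\mathcal D_n=o(\varepsilon_n)$'' with this choice of a sufficiently large multiplicative constant, your argument becomes the paper's proof. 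A small additional remark: there is no need to split into even and odd $n$; the discrepancy bound of Proposition~\ref{Proposition_Discrepancy} and the density $M_\rand$ are uniform in $n$.
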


	We also obtain unconditional omega results which imply that Ihara's upper bound \eqref{bound1} and lower bound \eqref{bound2} for $\gamma_D$ cannot be improved to $\mid \gamma_D \mid \leq  \log(\log(q^{g_D})) + O_q(1)$.
	\begin{theorem}\label{Theorem_Omega_Result}
	 	Let $\calP_n$ be the set of irreducible elements in $\calH_n$. For any $\varepsilon > 0$ and all large $n$, there are $\gg q^{\frac{n}{2}}$ elements $Q \in \calP_n$ such that
	 	$$
	 	\frac{L'(1,\chi_Q)}{L(1,\chi_Q)} \geq \log(\log(q^n))  + \log(\log(\log(q^n))) - A_q - \varepsilon,
	 	$$
	 	and $\gg q^{\frac{n}{2}}$ elements $Q \in \calP_n$ such that
	 	$$
	 	\frac{L'(1,\chi_Q)}{L(1,\chi_Q)} \leq -\log(\log(q^n)) - \log(\log(\log(q^n))) + A_q + \varepsilon,
	 	$$
	 	where $A_q := (2.61)\log(q) + \log(2\log(2)\zeta_q(2)) > 0$.
	 	
	 	Bounds for $\gamma_D$ are obtained by replacing $L'(1,\chi_Q)/L(1,\chi_Q)$ with $\gamma_D$ and $A_q$ with $A_q + \tilde{\gamma}$.
	\end{theorem}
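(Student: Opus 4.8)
The plan is to realize both extreme values by forcing $L'/L(1,\chi_Q)$ to be dominated by the contribution of the primes of small degree, with a prescribed sign. Starting from the Euler product,
\[
\frac{L'(1,\chi_Q)}{L(1,\chi_Q)}=-\log q\sum_{P}\frac{\chi_Q(P)\deg P}{q^{\deg P}-\chi_Q(P)},
\]
the sum over monic irreducibles $P\in\Fq[t]$. Fix a parameter $y$, let $M_y$ be the product of all monic irreducibles of degree $\le y$, and write $\pi(y)$ for their number, so that $\deg M_y=\sum_{d\le y}d\,\pi_q(d)\sim q^{y+1}/(q-1)$ and $\pi(y)\sim q^{y+1}/\bigl(y(q-1)\bigr)$, where $\pi_q(d)$ counts monic irreducibles of degree $d$. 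For the lower bound I would impose $\chi_Q(P)=-1$ for every $P$ with $\deg P\le y$ (for the upper bound, $\chi_Q(P)=+1$ throughout). By quadratic reciprocity in $\Fq[t]$ this forces $Q\bmod P$ into a fixed coset of the squares of $(\Fq[t]/P)^{\times}$ for each such $P$, hence, via the Chinese remainder theorem, forces $Q$ into a set $\mathcal{A}$ of residues modulo $M_y$ which is a single coset of the subgroup $H\le(\Fq[t]/M_y)^{\times}$ of classes that are squares modulo every $P\mid M_y$, so $|\mathcal{A}|/\phi(M_y)=2^{-\pi(y)}$. For such $Q$, using $\sum_{\deg P\le y}\frac{\deg P}{q^{\deg P}+1}=y-C_q+o(1)$ with $C_q=\sum_{d\ge1}\bigl(1-\tfrac{d\pi_q(d)}{q^d+1}\bigr)$ a bounded positive constant, one obtains
\[
\frac{L'(1,\chi_Q)}{L(1,\chi_Q)}=(y-C_q)\log q-\log q\cdot R(Q)+o(1),\qquad R(Q)=\sum_{\deg P>y}\frac{\chi_Q(P)\deg P}{q^{\deg P}-\chi_Q(P)}.
\]

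Next I would count the admissible $Q\in\calP_n$. Expanding the indicator of $\mathcal{A}$ over the characters of $(\Fq[t]/M_y)^{\times}$ and using that $\mathcal{A}$ is a coset of $H$, only the $2^{\pi(y)}$ quadratic characters $\chi_S=\bigl(\tfrac{\,\cdot\,}{\prod_{P\in S}P}\bigr)$, $S\subseteq\{P:\deg P\le y\}$, survive; together with the prime polynomial theorem and the Riemann hypothesis over finite fields (Weil), which gives $\bigl|\sum_{\deg f=n}\Lambda(f)\chi_S(f)\bigr|\le(\deg M_y)q^{n/2}$ for $S\neq\varnothing$, this yields
\[
\#\{Q\in\calP_n:Q\in\mathcal{A}\}=2^{-\pi(y)}\,\frac{q^n}{n}+O\!\left(\frac{(\deg M_y)\,q^{n/2}}{n}\right).
\]
Since $\deg M_y\sim q^y$, I would take $y$ maximal with $\pi(y)$ below $\tfrac{n\log q}{2\log 2}$ (so $q^y\sim\tfrac{q-1}{2q\log 2}\,n\log n$): this keeps the main term $\gg q^{n/2}$, indeed $\gg q^{n/2}(\log n)^{4}$, while swamping the error, and makes $(y-C_q)\log q=\log n+\log\log n+\log\tfrac{q-1}{2q\log 2}-C_q\log q+o(1)$. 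Using $\log\log q^n+\log\log\log q^n=\log n+\log\log n+\log\log q+o(1)$ and $A_q=(2.61)\log q+\log(2\log(2)\zeta_q(2))$, the difference $(y-C_q)\log q-\bigl(\log\log q^n+\log\log\log q^n-A_q\bigr)$ reduces, after substitution, to exactly $(2.61-C_q)\log q-\log\log q+o(1)$, which one checks is positive uniformly in the odd prime power $q$ — even after allowing for the discreteness of the values of $\pi(y)$ (a loss of at most $\log q$) — so that the bound holds with room to spare for $\varepsilon$ and the error terms; this is how the constant $2.61$ is chosen. The negative bound is the same construction with $\chi_Q(P)=+1$ and $C_q$ replaced by $C_q^{-}=\sum_{d\ge1}\bigl(\tfrac{d\pi_q(d)}{q^d-1}-1\bigr)$, which is likewise small.

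The delicate step, and the one I expect to be the main obstacle, is controlling $R(Q)$, i.e.\ the contribution of the primes of degree $>y$. Pointwise this is impossible: the Weil bound only gives $|R(Q)|\ll n\,q^{-y/2}\asymp\sqrt{n/\log n}$, since $\deg L(u,\chi_Q)$ grows linearly in $n$ while $q^{y/2}$ is only of size $\sqrt{n\log n}$. Instead I would work on average. Truncating at $z\sim 2\log_q n$, the primes of degree $>z$ contribute $O(nq^{-z/2})=o(1)$ pointwise, so it suffices to bound $U(Q)=\sum_{y<\deg P\le z}\chi_Q(P)\deg P\,q^{-\deg P}$ in mean square over $Q\in\calP_n\cap\mathcal{A}$. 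Expanding the square and again invoking the coset structure and the Weil bound, each off-diagonal sum $\sum_{Q\in\calP_n\cap\mathcal{A}}\chi_Q(P_1P_2)$ with $P_1\neq P_2$ is $O\!\bigl(q^{n/2}(\deg M_y)/n\bigr)=O(q^{n/2}\log n)$ — crucially \emph{without} the factor $\phi(M_y)$, precisely because $\mathcal{A}$ is one coset so only $O(2^{\pi(y)})$ characters intervene — whence
\[
\sum_{Q\in\calP_n\cap\mathcal{A}}|U(Q)|^2\ll\frac{\#\{Q\in\calP_n\cap\mathcal{A}\}}{n\log q}+q^{n/2}(\log n)^3.
\]
Since $\#\{Q\in\calP_n\cap\mathcal{A}\}\gg q^{n/2}(\log n)^{4}$, Chebyshev's inequality shows that all but $o\bigl(\#\{Q\in\calP_n\cap\mathcal{A}\}\bigr)$ of the admissible $Q$ satisfy $|U(Q)|$, hence $|R(Q)|$, less than $\varepsilon/\log q$; for each of these one gets $\frac{L'(1,\chi_Q)}{L(1,\chi_Q)}\ge(y-C_q)\log q-\varepsilon+o(1)\ge\log\log q^n+\log\log\log q^n-A_q-\varepsilon$, and there are $\gg q^{n/2}$ of them. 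The symmetric argument yields the lower bound, and the statements for $\gamma_D$ follow by adding $\widetilde{\gamma_q}$ through \eqref{Quadratic_Euler_Kronecker-0}. The whole argument hinges on replacing an impossible pointwise estimate for the large-degree primes by a second moment, together with the observation that pinning $\chi_Q$ at the small primes confines $Q$ to a single residue coset modulo $M_y$, so that only quadratic twists — not the full character group of $(\Fq[t]/M_y)^{\times}$ — contribute to the error and to the variance.
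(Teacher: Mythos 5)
Your proposal is correct in its structural backbone but follows a genuinely different statistical strategy than the paper. Both you and the paper start from the same key observation: prescribing the Legendre symbols $\chi_Q(P)$ at all primes $P$ of degree at most $y$ (equivalently, at most $m$) confines $Q$ to a single coset of the ``everywhere-square'' subgroup of $(\Fq[t]/M_y)^\times$, so that expanding the coset indicator over characters leaves only the $2^{\pi(y)}$ quadratic twists; combined with Weil/RH, this makes the relevant character sums tractable. The difference is what you do next. The paper (Section 8) computes a single \emph{first moment} $\sum_{Q\in S}\bigl(-L'(1,\chi_Q)/L(1,\chi_Q)\bigr)$ (Proposition \ref{Proposition_Intermediate_Omega_Step}), decomposes $S$ into the good set $G_{\varepsilon,\omega}$ and its complement, and leverages the pointwise Ihara bound $|L'/L|\ll_q\log\log(q^n)$ (Lemma \ref{Lemma_Ihara_Upper_Bound}) to solve the resulting linear inequality for $\#G_{\varepsilon,\omega}$; this is precisely the Granville--Soundararajan ``first moment plus sup-bound'' trick, and no variance estimate is ever needed. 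You instead truncate the tail $R(Q)$ of the Euler product at degree $z\asymp 2\log_q n$ (the ultra-tail being $o(1)$ pointwise by the same explicit-formula estimate (6.8.4) of Ihara that the paper uses in \eqref{Ihara_Approx}), then bound the \emph{second moment} of the truncation $U(Q)$ over the coset, and apply Chebyshev. Your variance bookkeeping is plausible: the diagonal is controlled by $\sum_{d>y}d^2\pi_q(d)q^{-2d}$ and the off-diagonal by Weil applied to nonsquare moduli $P_1P_2\prod_{P\in S}P$, and since the coset keeps the number of characters at $2^{\pi(y)}$ rather than $\phi(M_y)$, the error does not blow up. Your approach costs more work (a mean-square estimate and a separate truncation) but yields the slightly stronger conclusion that all but $o(\#(\calP_n\cap\mathcal{A}))$ elements of the coset achieve the extreme value. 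Two small cautions on the constant: (i) you write the small-prime contribution as $(y-C_q)\log q$ using the exact Euler factor $d_P/(q^{d_P}+1)$, whereas the paper lower-bounds via the $r$-odd truncation plus Lemma \ref{Lemma_Lower_Bound}'s explicit $2.61$; your $C_q=\sum_{d\geq1}\bigl(1-d\pi_q(d)/(q^d+1)\bigr)$ is a different (larger) deficit constant, and one does need to check numerically (which works out, $C_q\approx 1$ for $q=3$) that $(2.61-C_q)\log q-\log\log q>0$ for all odd prime powers to recover the stated $A_q$; (ii) you need $z$ strictly greater than $2\log_q n$ to make the ultra-tail $o(1)$ rather than $O(1)$. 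Neither is fatal, so the plan would succeed.
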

	
	We prove Theorem \ref{Theorem_Application_Small_Absolute_Value} following the number field method in \cite {akbary_hamieh_2024}, which drew ideas from \cite{lamzouri_2015}, \cite{lamzouri_lester_radziwill_2019}, \cite {lamzouri_languasco_2023} and \cite{hamieh_mcclenagan_2022}. More precisely, we establish an asymptotic formula for the $r$-th integral moments of $-L’(1,\chi_D)/L(1,\chi_D)$ (Proposition \ref{Proposition_Average_Power_LprimeL}), uniform on a certain range for $r$, and use it to compute the Laplace transform of $-L’(1,\chi_D)/L(1,\chi_D)$ asymptotically (Proposition \ref{Proposition_Estimation_Exponential_Series}). We then introduce a random series $-\frac{L'(1,\pbx)}{L(1,\pbx)}$ as the global function field analogue of the model in \cite{Granville_Soundararajan_2003} for integers, and compare its Laplace transform to that of $-L'(1,\chi_D)/L(1,\chi_D)$ (Proposition \ref{Proposition_Probabilistic_Estimation_Exponential_Series}). This yields, together with an application of Berry-Esseen inequality, a discrepancy estimate between the characteristic function of the series $-\frac{L'(1,\pbx)}{L(1,\pbx)}$ and the sequence of characteristic functions of some arithmetic functions (Proposition \ref{Proposition_Discrepancy}). The proof of Theorem  \ref{Theorem_Application_Small_Absolute_Value} results from the positivity of the density function associated with the distribution of the random series (Proposition \ref{Proposition_Positive_Density}) following an argument analogous to \cite{lamzouri_languasco_2023}*{p.368} (see also \cite{akbary_hamieh_2024}*{Proof of Theorem 1.5} and \cite{hamieh_mcclenagan_2022}*{Corollary 1.4}).
	
	Proposition 3.2, a crucial result in the proof of Theorem 1.1, provides an approximation for the value $-L'(s, \chi)/L(s, \chi)$ for a non-trivial Dirichlet character $\chi$ and $\re(s) > 1/2$ in terms of a short Dirichlet polynomial. Although we only need such a result for $s=1$ and a quadratic Dirichlet character, we prove the general statement as a worthwhile addition to the literature. The proof of this key assertion substantially differs from its number field analogue. We exploit the fact that $L(s, \chi)$ is a polynomial in $q^{-s}$ with roots $\alpha_i(\chi)$ satisfying $|\alpha_i(\chi)| \leq q^{1/2}$ by the proven Riemann Hypothesis for global function fields. Consequently, we establish an optimal error term in Proposition 3.2, which gives better error terms and more flexible range of moments for global function field counterparts of some of the assertions of \cite{lamzouri_2015} (see Propositions \ref{Proposition_Average_Power_LprimeL}, \ref{Proposition_Estimation_Exponential_Series}, \ref{Proposition_Probabilistic_Estimation_Exponential_Series}). 
	
	We prove Theorem \ref{Theorem_Omega_Result} using Granville--Soundararajan's strategy \cite{Granville_Soundararajan_2003}*{Proposition 9.1 and Theorem 5a}. Others have used this method to obtain analogous results for $L'(1,\chi_D)/L(1,\chi_D)$ in the number field case (\cite{Mourtada_Murty_2013}*{Theorem 2}), and for $L(1,\chi_D)$ over global function fields (\cite{lumley_2019}*{Theorem 1.6}). Our result, like \cite{lumley_2019}*{Theorem 1.6}, is unconditional, unlike those of \cite{Granville_Soundararajan_2003}*{Proposition 9.1 and Theorem 5a} and \cite{Mourtada_Murty_2013}*{Theorem 2}, which require the Generalized Riemann Hypothesis. Our Theorem \ref{Theorem_Omega_Result} stands alongside Granville--Soundararajan's, as omega results with explicit numerical bounds.

	\subsection*{Applications to Taguchi and Weil Heights}
	
	In this subsection, we use Theorems \ref{Theorem_Application_Small_Absolute_Value} and \ref{Theorem_Omega_Result} to provide refined information on stable Taguchi heights of Drinfeld modules and logarithmic Weil heights of their $j$-invariants. We start with background material and refer the reader to \cite{papikian_2023} and \cite{wei_2017} for more details.
	
	Let $F$ be a field containing $\Fq$ and $F\{\tau\}$ the set of polynomials in the variable $\tau$ and coefficients in $F$. Endowed with the usual polynomial addition and the twisted multiplication $\tau a = a^q \tau$ for all $a \in F$, the set $F\{\tau\}$ is a non-commutative ring. 
	
	Let $|\cdot|$ be the absolute value corresponding to the place $\infty$ of $\Fq(t)$, normalized by $|t|=q$. This absolute value extends uniquely to an algebraic closure $\overline{\Fq(t)_\infty}$ of the completion $\Fq(t)_\infty$ of $\Fq(t)$ at $\infty$. We write $\C_\infty$ for the algebraically closed complete valued field of characteristic $p > 0$ that is the completion of $\overline{\Fq(t)_\infty}$. We embed $\Fq(t)$ into $\C_\infty$ via $\infty$. For $F$ a subfield of $\C_\infty$, we let $\overline{F}$ be its algebraic closure in $\C_\infty$ and $F^\text{sep}$ its separable closure in $\overline{F}$.
	
	A Drinfeld $\Fq[t]$-module over $\C_\infty$ of rank $2$ is a ring morphism $\rho: \Fq[t] \to \C_\infty\{\tau\}$ with
	\begin{equation}\label{Defining_Equation_Rank_2_Drinfeld_Module}
	\rho(t) = t + a \tau + \Delta \tau^2,
	\end{equation}
	where $a, \Delta \in \C_\infty$ and $\Delta \neq 0$. We simply say Drinfeld module if there is no risk of confusion. Its $j$-invariant is the element $j(\rho) := a^{q+1}/\Delta$ of $\C_\infty$. If $F$ is a subfield of $\C_\infty$ such that $\rho(\Fq[t])$ is in $F\{\tau\}$, then we say that $\rho$ is defined over $F$ and denote by $\rho/F$. In particular, $\rho/\overline{\Fq(t)}$ is defined over a finite extension of $\Fq(t)$ since $a$ and $\Delta$ are algebraic over $\Fq(t)$.
	
	A morphism $\rho_1 \to \rho_2$ of Drinfeld modules over $F$ is a $\phi \in F\{\tau\}$ such that $\phi \rho_1(a) = \rho_2(a) \phi$ for all $a \in \Fq[t]$. In particular, if $\phi$ is invertible, we say it is an isomorphism and $\rho_1$ and that $\rho_2$ are isomorphic over $F$, while if $\rho_1 = \rho_2$, we say $\phi$ is an endomorphism of $\rho/F$. That $\rho_1$ and $\rho_2$ are isomorphic over $F^\text{sep}$ amounts to $j(\rho_1) = j(\rho_2)$ (see \cite{papikian_2023}*{Lemma 3.8.4}). If $\rho/F$ is a Drinfeld module given by \eqref{Defining_Equation_Rank_2_Drinfeld_Module}, then up to an $F^\text{sep}$-isomorphism we can and will consider it as given by $\rho(t) = t + \tau + j(\rho)^{-1}\tau^2$.
	
	Now suppose $F/\Fq(t)$ is a finite extension. We can define the logarithmic Weil height $h(j(\rho))$ of $j(\rho)$ (see \cite{Angles_Armana_Bosser_Pazuki_2024}*{(3)}), and, related to it, a naive height for $\rho/F$, as in \cite{David_Denis_1999}*{D\'efinition 2.5}, by
	$$
	H(\rho) := \max\{h(t),h(1),h(j(\rho)^{-1})\} = \max\{1,h(j(\rho))\} \leq 1 + h(j(\rho)).
	$$
	
	Another height, $h_\Tag(\rho/F)$, is defined for $\rho/F$ by Taguchi in \cite{taguchi_1993}*{Section 5} and coined by Wei in \cite{wei_2017} as the Taguchi height of $\rho/F$. This height plays the role of an analogue of the Faltings height introduced in \cite{faltings_1983}. Up to a finite extension of $F$ of  degree at most $q^3$, $\rho/F$ has ``stable reduction everywhere'' (see \cite{David_Denis_1999}*{Lemme 2.10}). From \cite{wei_2017}*{p.1066}, it follows that the Taguchi heights $h_\Tag(\rho/L)$ coincide for all large enough extensions $L/F$. Wei then introduces the stable Taguchi height of $\rho$ as the always-existing limit
	$$
	h^{\mathrm{St}}_\Tag(\rho) := \log(q)\lim_{F'/F \text{ finite }} h_\Tag(\rho/F').
	$$
	That $\rho_1/\overline{\Fq(t)}$ and $\rho_2/\overline{\Fq(t)}$ are isomorphic over $\overline{\Fq(t)}$ amounts to $h^{\mathrm{St}}_\Tag(\rho_1) = h^{\mathrm{St}}_\Tag(\rho_2)$ (see \cite{wei_2017}*{Remark 4.2 (3)}). For $\rho/\overline{\Fq(t)}$, \cite{David_Denis_1999}*{Lemme 2.14 (v)} gives $h_\Tag^\St(\rho) \leq 5 H(\rho) + 1$ and so
	\begin{equation}\label{Bridge_Taguchi_Weil}  h(j(\rho)) \geq \frac{1}{5} h_\Tag^\St(\rho) - \frac{6}{5}.
	\end{equation}
	
	The set $\mathrm{End}_{\Fq[t]}(\rho/F)$ of endomorphisms of $\rho/F$ is a ring. For an odd integer $n \geq 1$ and $D \in \calH_n$, we let $K_D$ be the quadratic extension of $\Fq(t)$ generated by a square root of $D$ in $\overline{\Fq(t)}$  (a so-called imaginary quadratic field). The integral closure of $\Fq[t]$ in $K_D$ is $\calO_D := \Fq[t][\sqrt{D}]$. A Drinfeld module $\rho/\overline{\Fq(t)}$ is said to have complex multiplication by $\calO_D$ (or CM by $\calO_D$, for short) if the rings $\mathrm{End}_{\Fq[t]}(\rho/\overline{\Fq(t)})$ and $\calO_D$ are isomorphic.
	
	Next, we discuss connections between these heights and Theorems \ref{Theorem_Application_Small_Absolute_Value} and \ref{Theorem_Omega_Result}. Colmez gives in \cite{colmez_1993}*{Th\'eor\`eme 0.3 and Conjecture 0.4} a conjectural geometric Chowla-Selberg formula that connects the Faltings height of CM Abelian varieties over $\Q$, some special $\Gamma$-values and the value at $s = 0$ of logarithmic derivatives of certain Artin $L$-functions. His conjecture was extended in \cite{obus_2013} and in \cite{Andreatta_Eyal_Howard_Madapusi-Pera_2018} and \cite{yuan_zhang_2018} the authors prove an average version of the conjecture. In \cite{wei_2017}*{Theorem 4.4} and \cite{wei_2020}*{Theorem 1.6}, Wei establishes a Drinfeld-module analogue of Colmez's formula. We now follow and build on his \cite{wei_2017}*{Remark 4.6 (ii)}. In the context of this section, Wei establishes \begin{equation}\label{Equation_Colmez-type_formula}  h^{\mathrm{St}}_\Tag(\rho) = \frac{\log(q^n)}{4}  - \frac{q\log(q)}{2(q-1)} + \frac{1}{2h_D} \sum_{f \in \calM_{\leq n-1}} \chi_D(f) \log\left( \frac{N(f)}{q^n} \right),
\end{equation}
where $h_D$ is the class number of $\calO_D$ and $\calM_{\leq n-1}$ is the set of monic polynomials of $\Fq[t]$ of degree at most $n-1$. Wei's Chowla-Selberg formula reads \begin{equation}\label{Equation_Chowla-Selberg}   \frac{L'(0,\chi_D)}{L(0,\chi_D)} = -\log(q^n) - \frac{1}{h_D} \sum_{f \in \calM_{\leq n-1}} \chi_D(f) \log\left( \frac{N(f)}{q^n} \right),  \end{equation}  
Substituting \eqref{Equation_Chowla-Selberg} into \eqref{Equation_Colmez-type_formula} yields \begin{equation}\label{Rewritten_Colmez-type_formula} h^{\mathrm{St}}_\Tag(\rho) = -\frac{\log(q^n)}{4} -\frac{q\log(q)} {2(q-1)}-\frac {1} {2}\frac{L’(0,\chi_D)}{L(0,\chi_D)},
\end{equation}
which is closely related to Wei's Colmez-type formula (see \cite{wei_2017}*{Corollary 1.2} for the formula). Deriving the functional equation $L(s,\chi_D) = \left( q^{1-2s} \right)^{\frac{n-1}{2}} L(1-s,\chi_D)$ on $\re(s) \neq 1/2$ gives \begin{equation}\label{Equation_Relation_Log-Derivatives}   \frac{L'(s,\chi_D)}{L(s,\chi_D)} = -\log(q^{n-1}) - \frac{L'(1-s.\chi_D)}{L(1-s,\chi_D)}.
\end{equation}
In particular, taking $s = 1$ in \eqref{Equation_Relation_Log-Derivatives}, we can rewrite \eqref{Rewritten_Colmez-type_formula} as  \begin{equation}\label{Final_Colmez-type_formula}   h^{\mathrm{St}}_\Tag(\rho) = \frac{\log(q^n)}{4} - \frac{(2q-1)\log(q)}{q-1} + \frac{1}{2}\frac{L'(1,\chi_D)}{L(1,\chi_D)}.
\end{equation}
For large odd $n$, Wei obtains from \cite{ihara_2006}*{upper bound (0.6) and lower bound (0.12)} \begin{equation}\label{BigO_TaguchiHeights}  h^\mathrm{St}_\Tag(\rho) = \frac{\log(q^n)}{4} + O_q\left( \log(\log(q^n))\right).
\end{equation}
By \cite{ihara_2006}*{Theorem 1 and Corollary 2}, \eqref{Equation_Genus_gD} and \eqref{Final_Colmez-type_formula}, we can make a more precise statement than \eqref{BigO_TaguchiHeights}: For large odd $n$ and $\varepsilon > 0$ we have $|\gamma_D| < 2(1+\varepsilon)\log(\log(q^\frac{n-3}{2}))$ and thus \begin{equation}\label{Epsilon_Taguchi_Bounds}   \left| h^\mathrm{St}_\Tag(\rho) - \frac{\log(q^n)}{4} + \frac{(9q-7)\log(q)}{4(q-1)} \right| < (1+\varepsilon)\log(\log(q^{\frac{n-3}{2}})).
\end{equation}
	
From the equation after \cite{Angles_Armana_Bosser_Pazuki_2024}*{(26)}, together with \eqref{Equation_Genus_gD} and \eqref{Bridge_Taguchi_Weil}, we get
\begin{equation}\label{A_A_B_P_Log_Weil_Height}   h(j(\rho)) \geq \left( \frac{1}{20} - \frac{1}{10(q^{1/2}+1)} \right)\log(q^n) - \frac{3q - q^{1/2} -1}{10(q-1)}\log(q) - \frac{6}{5}.
\end{equation}
In our context, \eqref{A_A_B_P_Log_Weil_Height} is sharper than the general lower bound given in \cite{Angles_Armana_Bosser_Pazuki_2024}*{Proposition 4.18}.

Parts (i), (ii) and (iii) of Corollary \ref{Corollary_Stable_Taguchi_Height} respectively refine \eqref{BigO_TaguchiHeights}, \eqref{Epsilon_Taguchi_Bounds} and \eqref{A_A_B_P_Log_Weil_Height}. It follows from \eqref{Final_Colmez-type_formula} and Theorems \ref{Theorem_Application_Small_Absolute_Value} and \ref{Theorem_Omega_Result}.
	\begin{corollary}\label{Corollary_Stable_Taguchi_Height}
		\begin{enumerate}
			\item[(i)] For any odd $n \geq 1$, there are $\gg q^n \log^2(\log(q^n))/\log(q^n)$ polynomials $D$ in $\calH_n$ such that, for each such $D$, each $\rho/\overline{\Fq(t)}$ of rank $2$ with CM by $\calO_D$ satisfies
			$$
			 h^{\mathrm{St}}_\Tag(\rho) = \frac{\log(q^n)}{4} - \frac{(2q-1)\log(q)}{q-1} + O_q\left(\frac{\log^2(\log(q^n))}{\log(q^n)}\right).
			$$
			\item[(ii)] For any $\varepsilon > 0$ and odd large $n \geq 1$, 
			there are $\gg q^{n/2}$ polynomials $Q$ in $\calP_n$ such that for each of such $Q$, $\rho/\overline{\Fq(t)}$ of rank $2$ with CM by $\calO_D$ satisfies
			$$
			h^{\mathrm{St}}_\Tag(\rho) \geq \frac{\log(q^n)}{4} + \frac{\log(\log(q^n))}{2} + \frac{\log(\log(\log(q^n)))}{2} - \frac{(2q-1)\log(q)}{q-1} - \frac{A_q + \varepsilon}{2},
			$$
			and $\gg q^{n/2}$ polynomials $Q$ in $\calP_n$ such that for each of such $Q$, each $\rho/\overline{\Fq(t)}$ of rank $2$ with CM by $\calO_D$ satisfies
			$$
			h^{\mathrm{St}}_\Tag(\rho) \leq \frac{\log(q^n)}{4} - \frac{\log(\log(q^n))}{2} - \frac{\log(\log(\log(q^n)))}{2} - \frac{(2q-1)\log(q)}{q-1} + \frac{A_q + \varepsilon}{2}.
			$$
			\item[(iii)] The Drinfeld $\Fq[t]$-modules appearing in the first part of (ii) also satisfy
			$$
			h(j(\rho)) \geq \frac{\log(q^n)}{20}  + \frac{\log(\log(q^n))}{10} + \frac{\log(\log(\log(q^n)))}{10} - \frac{(2q-1)\log(q)}{5(q-1)} - \frac{12 + A_q + \varepsilon}{10}.
			$$
		\end{enumerate}
	\end{corollary}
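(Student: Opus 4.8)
The plan is to deduce all three parts of Corollary \ref{Corollary_Stable_Taguchi_Height} from Wei's Colmez-type formula \eqref{Final_Colmez-type_formula} by substituting the estimates of Theorems \ref{Theorem_Application_Small_Absolute_Value} and \ref{Theorem_Omega_Result}; apart from bookkeeping, the only ingredient required is the complex-multiplication theory that keeps the statements from being vacuous. Fix an odd integer $n \geq 1$ and $D \in \calH_n$, so that $K_D$ is imaginary quadratic with maximal order $\calO_D = \Fq[t][\sqrt{D}]$. The analytic (lattice) theory of rank $2$ Drinfeld modules over $\C_\infty$ produces at least one $\rho$ of rank $2$ with CM by $\calO_D$; its $j$-invariant is algebraic over $\Fq(t)$ (a singular modulus), so $\rho$ may be taken to be defined over $\overline{\Fq(t)}$, and, because the right-hand side of \eqref{Final_Colmez-type_formula} depends only on $D$, that identity holds simultaneously for every rank $2$ Drinfeld module over $\overline{\Fq(t)}$ with CM by $\calO_D$. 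Hence each clause of Corollary \ref{Corollary_Stable_Taguchi_Height} concerns a nonempty family, and on that family $h^{\mathrm{St}}_{\Tag}(\rho)$ is governed entirely by $L'(1,\chi_D)/L(1,\chi_D)$.

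For part (i), I would feed Theorem \ref{Theorem_Application_Small_Absolute_Value} into \eqref{Final_Colmez-type_formula}: for the $\gg_q q^n \log^2(\log(q^n))/\log(q^n)$ polynomials $D \in \calH_n$ with $|L'(1,\chi_D)/L(1,\chi_D)| \ll_q \log^2(\log(q^n))/\log(q^n)$, every rank $2$ Drinfeld module over $\overline{\Fq(t)}$ with CM by $\calO_D$ satisfies the asserted asymptotic for $h^{\mathrm{St}}_{\Tag}(\rho)$. For part (ii), one applies Theorem \ref{Theorem_Omega_Result} to $\calP_n \subseteq \calH_n$ (valid for all large $n$, hence for large odd $n$), obtaining $\gg q^{n/2}$ elements $Q \in \calP_n$ with $L'(1,\chi_Q)/L(1,\chi_Q) \geq \log(\log(q^n)) + \log(\log(\log(q^n))) - A_q - \varepsilon$ and $\gg q^{n/2}$ with the reversed inequality; substituting each into \eqref{Final_Colmez-type_formula} and halving the $L'/L$-term yields the two displayed estimates for $h^{\mathrm{St}}_{\Tag}(\rho)$, where largeness of $n$ serves only to make $\log(\log(\log(q^n)))$ meaningful.

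For part (iii), I would keep the Drinfeld modules in the first family of part (ii), namely those with $h^{\mathrm{St}}_{\Tag}(\rho) \geq \tfrac{\log(q^n)}{4} + \tfrac{\log(\log(q^n))}{2} + \tfrac{\log(\log(\log(q^n)))}{2} - \tfrac{(2q-1)\log(q)}{q-1} - \tfrac{A_q + \varepsilon}{2}$, and combine this with the inequality $h(j(\rho)) \geq \tfrac{1}{5} h^{\mathrm{St}}_{\Tag}(\rho) - \tfrac{6}{5}$ from \eqref{Bridge_Taguchi_Weil}; collecting terms (using $\tfrac{A_q + \varepsilon}{10} + \tfrac{6}{5} = \tfrac{12 + A_q + \varepsilon}{10}$) gives the stated lower bound for $h(j(\rho))$. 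I do not expect a genuine obstacle here: the analytic content is already in Theorems \ref{Theorem_Application_Small_Absolute_Value} and \ref{Theorem_Omega_Result} and in Wei's formula, so only elementary arithmetic remains. The one point to treat carefully is the complex-multiplication input above — existence and $\overline{\Fq(t)}$-rationality of the relevant Drinfeld modules, together with the fact that all $\overline{\Fq(t)}$-isomorphism classes with CM by $\calO_D$ form a single Galois orbit and hence share the same stable Taguchi height — which is what guarantees that \eqref{Final_Colmez-type_formula} applies uniformly to each such $\rho$ and that every assertion is nonvacuous.
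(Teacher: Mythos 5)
Your proposal is correct and matches the paper's own derivation, which simply notes that the corollary follows from the Colmez-type formula \eqref{Final_Colmez-type_formula} together with Theorems \ref{Theorem_Application_Small_Absolute_Value} and \ref{Theorem_Omega_Result} (plus \eqref{Bridge_Taguchi_Weil} for part (iii)); your arithmetic in all three parts checks out. The added discussion of existence, $\overline{\Fq(t)}$-rationality, and the single Galois orbit of CM Drinfeld modules is sound extra care, though the paper relies on the fact that the right-hand side of Wei's formula depends only on $D$ and leaves these points implicit.
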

	
\section{Notation and Prerequisites}
	Let $\R$ denote the field of real numbers, $\log$ the natural logarithm, and $m$, $n$ positive integers. We write $\lfloor x \rfloor$ for the greatest integer smaller than a real number $x$. The cardinality of a set $S$ is denoted by $\# S$. For real-valued functions $A$ and $B$ defined on a set $X$, we write $A(x) = O(B(x))$ or $A(x) \ll B(x)$ to say that there is a $C> 0$ for which $|A(x)| \leq C |B(x)|$ for all $x \in X$. The notation $A(x) \asymp B(x)$ means that $A(x) \ll B(x)$ and $B(x) \ll A(x)$. A constant with a subscript indicates that the constant depends on that subscript. Moreover, writing $A(x) = O_{\alpha,\beta} (B(x))$ or equivalently $A(x) \ll_{\alpha,\beta} B(x)$, indicates that the implicit constant depends on $\alpha$ and $\beta$.
	
	We use $q$ for a positive power of an odd prime, $\log_q$ for the logarithm in base $q$, $\Fq$ for the finite field with $q$ elements, $\Fq[t]$ for the polynomial ring over $\Fq$ in a variable $t$, and $\Fq(t)$ for its fraction field. Inside $\Fq[t]$, we consider the subsets $\calM$ of monic elements, $\calH$ of square-free elements, and $\calP$ of irreducible elements. A generic element of $\calP$ is denoted as $P$. For each integer $n \geq 1$, $\calM_n$, $\calH_n$ and $\calP_n$, respectively, denote the subsets of $\calM$, $\calH$ and $\calP$ of degree-$n$ elements and set $\calM_0 = \{1\}$. We let $\pi_q(n)$ be the cardinality of $\calP_n$.  We also write $\calM_{\leq n}$ (respectively $\calP_{\leq n}$) for the union of the $\calM_j$ (respectively of the $\calP_j$) with $1 \leq j \leq n$, and $\calM_{> n}$ as its complement in $\calM$. In particular, the cardinality of $\calP_{\leq n}$ is denoted $\Pi_q(n)$. Additionally, $\mathscr{P}(n)$ denotes the product of all $P \in \calP_{\leq n}$. The respective cardinalities of $\calM_n, \calH_n$ and $\calP_n$ are
	\begin{align*}
	&\# \calM_n = q^n,\numberthis\\
	&\# \calH_1 = q \text{ and } \#\calH_n = q^n - q^{n-1} \text{ if } n > 1,\numberthis\label{Cardinality_H_n}\\
	&\pi_q(n) = \frac{q^n}{n} + O\left(\frac{q^{\frac{n}{2}}}{n}\right),\numberthis\label{Prime_Number_Theorem}\\
	&\pi_q(n) \geq \frac{q^n}{n} - \frac{q^{\frac{n}{2}}}{n} - q^{\frac{n}{3}},\numberthis\label{Lower_Bound_Prime_Number_Theorem}
	\end{align*}
	see \cite{rosen_2002}*{Theorem 2.2 and its proof, and Proposition 2.3}). Furthermore, the notations $\sum_f$ and $\prod_P$ represent the limits $\lim_{n \to \infty} \sum_{f \in \calM_{\leq n}}$ and $\lim_{n \to \infty} \prod_{P \in \calP_{\leq n}}$.
	
\subsection{Affine Setting}

Each $f \in \calM$ of degree $d_f$ has norm $N(f) = q^{d_f}$. The zeta function of $\Fq[t]$ is defined as the series $\zeta(s,\Fq[t]) := \sum_f 1/N(f)^s$ in the complex variable $s$. This series converges absolutely for $\mathrm{Re}(s) > $1, as well as the product in \begin{equation}\label{Rational_and_Euler_Product_Zeta}  \frac{1}{1-q^{1-s}} = \zeta(s,\Fq[t]) = \prod_P \frac{1}{1 - N(P)^{-s}}. \end{equation}
We define the von Mangoldt function $\Lambda$ on $\calM$ by
$$
\Lambda(f) = \begin{cases}  \log(N(P)) &\text{if } f = P^m, \text{ for some integer } m \geq 1,\\
	0 &\text{otherwise}.
\end{cases}
$$

Taking the logarithmic derivatives in \eqref{Rational_and_Euler_Product_Zeta}, we find \begin{equation}\label{Log_Derivative_Zeta_von_Mangoldt}  \frac{\log(q)}{q^{s-1} - 1} =  -\frac{\zeta'(s,\Fq[t])}{\zeta(s,\Fq[t])} = \sum_f \frac{\Lambda(f)}{N(f)^s}.
\end{equation}

Next, letting $T:= q^{-s}$ in \eqref{Rational_and_Euler_Product_Zeta} gives $(1-qT)^{-1} = \prod_P (1-T^{d_P})^{-1}$. Taking the logarithmic derivatives and comparing their coefficients of the series expansions in $T$ yield \begin{equation}\label{Equation_Sum_von_Mangoldt_Same_degree}  \sum_{f \in \calM_n} \Lambda(f) = q^n, \text{ for all integers } n \geq 1.
\end{equation}
The following estimates are used in the text without always being explicitly mentioned.
\begin{lemma}\label{Lemma_Frequent_Computations}
	Let $y \geq 1$ be a real number. We have
	\begin{equation}\label{Equation_Finite_Sum_of_Norms}
		\sum_{N(f) \leq y} \frac{1}{N(f)} = \lfloor \log_q(y) \rfloor + 1, \sum_{N(f) \leq y} \frac{1}{N(f)^a} \ll_q 1 \text{ for an integer } a > 1
	\end{equation}
	and
	\begin{equation}\label{Equation_Average_Sum_von_Mangoldt_Same_degree}
		\sum_{N(f) \leq y} \frac{\Lambda(f)}{N(f)} = \lfloor \log_q(y) \rfloor.
	\end{equation}
\end{lemma}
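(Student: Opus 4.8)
The plan is to reduce each of the three sums to a sum over degrees and then invoke the point counts already recorded in the text, namely $\#\calM_d = q^d$ for $d \geq 0$ and, for the von Mangoldt sum, equation \eqref{Equation_Sum_von_Mangoldt_Same_degree}. The one observation that makes all three work is that, for a monic $f$, the condition $N(f) \leq y$ reads $q^{d_f} \leq y$, i.e.\ $d_f \leq \log_q(y)$, which for the integer $d_f$ is the same as $d_f \leq \lfloor \log_q(y) \rfloor$.

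For the first identity in \eqref{Equation_Finite_Sum_of_Norms}, I would partition the monic $f$ with $N(f) \leq y$ by their degree $d$, with $0 \leq d \leq \lfloor \log_q(y) \rfloor$. Every such $f$ has $N(f) = q^d$, so the inner sum over $\calM_d$ equals $\#\calM_d / q^d = q^d / q^d = 1$, and summing over the admissible $d$ yields $\lfloor \log_q(y) \rfloor + 1$ (in the degenerate range $1 \leq y < q$ this is just the lone contribution of $f = 1$). For the second estimate I would bound the finite sum by the full series: $\sum_{N(f) \leq y} N(f)^{-a} \leq \sum_f N(f)^{-a} = \zeta(a,\Fq[t]) = (1 - q^{1-a})^{-1}$ by \eqref{Rational_and_Euler_Product_Zeta}, which converges for an integer $a > 1$ and is $\ll_q 1$ (indeed at most $q/(q-1) \leq 3/2$ since $a \geq 2$ and $q \geq 3$).

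For \eqref{Equation_Average_Sum_von_Mangoldt_Same_degree} I would run the same partition by degree, now over $1 \leq d \leq \lfloor \log_q(y) \rfloor$, the $d = 0$ term dropping out because $\Lambda(1) = 0$. This turns the left-hand side into $\sum_{d=1}^{\lfloor \log_q(y) \rfloor} q^{-d} \sum_{f \in \calM_d} \Lambda(f)$, and \eqref{Equation_Sum_von_Mangoldt_Same_degree} collapses each inner sum to $q^d$, leaving $\sum_{d=1}^{\lfloor \log_q(y) \rfloor} 1 = \lfloor \log_q(y) \rfloor$. There is no genuine obstacle in this lemma; the only points requiring a moment's care are the boundary case $y < q$, where all three sums reduce to their (possibly empty) part of degree $\leq 0$, and the bookkeeping that $\calM_0 = \{1\}$ contributes to the first identity but not to the third.
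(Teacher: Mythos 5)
Your proof is correct and matches the paper's approach, which simply notes that \eqref{Equation_Finite_Sum_of_Norms} is a direct computation and \eqref{Equation_Average_Sum_von_Mangoldt_Same_degree} follows from \eqref{Equation_Sum_von_Mangoldt_Same_degree}; you have just spelled out the same degree-by-degree bookkeeping that the paper leaves implicit.
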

\begin{proof}
	The assertions in \eqref{Equation_Finite_Sum_of_Norms} are direct computations and \eqref{Equation_Average_Sum_von_Mangoldt_Same_degree} follows from \eqref{Equation_Sum_von_Mangoldt_Same_degree}.
\end{proof}
Let $D \in \calM$ and $P \in \calP$. We set
\begin{equation}\label{Dirichlet_Character}
\chi_D(P) =
\begin{cases}
	-1&\text{if } D \pmod{P} \text{ is a non-square},\\
	0&\text{if } D \pmod{P} \text{ is zero},\\
	1&\text{if } D \pmod{P} \text{ is a non-zero square}.
\end{cases}
\end{equation}
We then define, for $f \in \calM$, the quantity $\chi_D(f) := \prod_{P|f} \chi_D(P)^{\ord_P(f)}$, where $\ord_P(f)$ is the $P$-adic valuation of $f$. This gives a real quadratic character modulo $D$ on $\calM$, denoted $\chi_D$.

The Dirichlet $L$-function of $\chi_D$ is defined, for $\re(s) > 1$, as
\begin{equation}\label{Definition_Dirichlet_L-function}
	L(s,\chi_D) := \sum_f \frac{\chi_D(f)}{N(f)^s} = \prod_{P \nmid D} \frac{1}{1 - \chi_D(P)N(P)^{-s}}.
\end{equation}
The series converges absolutely and so does the product. An analogous reasoning that yields \eqref{Log_Derivative_Zeta_von_Mangoldt}, shows that

\begin{equation}\label{Equation_Log_Derivative_Dirichlet_L-Function}
		\sum_P \frac{\chi_D(P)\Lambda(P)}{N(P)^s - \chi_D(P)}	= -\frac{L'(s,\chi_D)}{L(s,\chi_D)} = \sum_f \frac{\Lambda(f)\chi_D(f)}{N(f)^s}.
\end{equation}
	
From \cite{ihara_2008}*{Equation (6.8.20)} we have the following.

\begin{lemma}\label{Lemma_Ihara_Upper_Bound}
	For any integer $n \geq 1$ and $D \in \calH_n$, we have
	\begin{equation}
		\left| \frac{L'(s,\chi_D)}{L(s,\chi_D)}\right| \ll_q \begin{cases}
			
			\log^{2(1-\re(s))}(q^n) &\text{ if } \frac{1}{2} < \re(s) < 1,\\
			\log\left( \log(q^n)\right) & \text{ if } \re(s) = 1,\\
			1 & \text{ if } \re(s) > 1.
		\end{cases}
	\end{equation}
\end{lemma}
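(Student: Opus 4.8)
The plan is to exploit two facts special to the function-field setting: that $L(s,\chi_D)$ is a polynomial in $q^{-s}$ of small degree, and that its inverse roots are pinned down by the Riemann Hypothesis for curves over $\Fq$. Write $L(s,\chi_D)=\prod_{i=1}^{d}\bigl(1-\alpha_i q^{-s}\bigr)$ with $d\le n$ and $|\alpha_i|\le q^{1/2}$ (Weil). Grouping the right-hand side of \eqref{Equation_Log_Derivative_Dirichlet_L-Function} by degree, and independently computing the logarithmic derivative from the factorisation, one obtains
\[
-\frac{L'(s,\chi_D)}{L(s,\chi_D)}=\sum_{m\ge 1}a_m(D)\,q^{-ms},\qquad
a_m(D):=\sum_{f\in\calM_m}\Lambda(f)\chi_D(f)=-\log q\sum_{i=1}^{d}\alpha_i^{m},
\]
and, since $|\alpha_i q^{-s}|<1$ as soon as $\re(s)>1/2$, this series represents $-L'/L$ throughout $\re(s)>1/2$. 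The point is that $a_m(D)$ admits two complementary bounds: the elementary one $|a_m(D)|\le\sum_{f\in\calM_m}\Lambda(f)\ll_q q^m$ from \eqref{Equation_Sum_von_Mangoldt_Same_degree}, and the Riemann-Hypothesis one $|a_m(D)|\le d\log q\cdot q^{m/2}\ll_q n\,q^{m/2}$; the first is sharper for small $m$, the second for large $m$, with crossover near $m\asymp 2\log_q\log(q^n)$.

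Accordingly I would set $M:=\bigl\lceil 2\log_q\log(q^n)\bigr\rceil$ and, writing $\sigma:=\re(s)$, estimate
\[
\left|\frac{L'(s,\chi_D)}{L(s,\chi_D)}\right|\le \sum_{m=1}^{M}q^{m}\,q^{-m\sigma}+n\log q\sum_{m>M}q^{m/2}\,q^{-m\sigma}.
\]
When $\sigma=1$ the first sum equals $M\ll_q\log\log(q^n)$ and the second is $\ll_q n\log q\cdot q^{-M/2}\ll_q 1$ by the choice of $M$, which is the middle bound. When $\sigma>1$ the splitting is unnecessary: bounding $-L'/L$ by $\sum_f\Lambda(f)N(f)^{-\sigma}=\frac{\log q}{q^{\sigma-1}-1}$ through \eqref{Log_Derivative_Zeta_von_Mangoldt} already gives $O_q(1)$. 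When $1/2<\sigma<1$ both geometric sums above are $\asymp q^{M(1-\sigma)}\asymp\bigl(\log(q^n)\bigr)^{2(1-\sigma)}$, which is the first bound.

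The delicate step is the uniformity of the implied constants as $\sigma$ approaches the endpoints $1/2$ and $1$: the geometric ratios $1/(1-q^{1/2-\sigma})$ and $1/(q^{1-\sigma}-1)$ produced above degenerate there, so to get a bound uniform up to the endpoints one must either let $M$ depend on $\sigma$ or treat a fixed neighbourhood of each endpoint separately. For the uses made of this lemma it is enough that $\sigma$ equal $1$, or lie in a fixed compact subinterval of $(1/2,1)$, or satisfy $\sigma\ge 1+\delta$ with $\delta>0$ fixed, and in each of these regimes the argument above is already complete; alternatively, all three estimates are recorded as \cite{ihara_2008}*{(6.8.20)}, whose proof proceeds along exactly these lines.
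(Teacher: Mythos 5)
The paper offers no proof of this lemma; it is stated with only a citation to equation (6.8.20) of \cite{ihara_2008}. Your argument supplies a correct and essentially self-contained proof, and it rests on exactly the tools the paper uses elsewhere: the factorisation $L(s,\chi_D)=\prod_{i=1}^{n-1}\bigl(1-\alpha_i q^{-s}\bigr)$ with $|\alpha_i|\le q^{1/2}$ (the paper invokes this in the proof of Proposition~\ref{Proposition_rthPower_LogDerivative}), the resulting coefficient identity $a_m(D)=-\log(q)\sum_i\alpha_i^m$, the pair of complementary bounds $|a_m(D)|\ll_q q^m$ and $|a_m(D)|\ll_q n\,q^{m/2}$, and the cut at $M\asymp 2\log_q\log(q^n)$, which balances the two. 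Your closing caveat on uniformity is a fair criticism of the statement rather than a defect of your proof: the geometric ratios $1/(q^{1-\sigma}-1)$ and $1/(1-q^{1/2-\sigma})$ that arise in the first case degenerate as $\re(s)$ approaches $1$ and $1/2$ respectively, and in the third case the bound drifts toward the $\log\log$ regime as $\re(s)\to 1^{+}$, so the implied constants in those two cases necessarily depend on $\re(s)$; the ``$\ll_q$'' there would more precisely be ``$\ll_{q,\re(s)}$''. Since the paper only ever invokes this lemma at $\re(s)=1$, where your estimate is complete and fully uniform in $n$ and $D$, nothing downstream is affected.
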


The next estimates are also useful.
\begin{lemma}\label{Lemma_Estimate_On_Sums_of_Quadratic_Characters}
	Let $f \in \calM \smallsetminus \{1\}$.
	\begin{enumerate}
		\item[(i)] If $f$ is a square in $\Fq[t]$, then
		$$
		\sum_{D \in \calH_n} \chi_D(f) = \#\calH_n \prod_{P|f} \left( \frac{N(P)}{N(P)+1}\right) + O_q\left( (\#\calH_n)^{1/2}\right).
		$$
		\item[(ii)] If $f$ is not a square in $\Fq[t]$, then for each $\varepsilon > 0$ we have
		$$
		\left|\sum_{D \in \calH_n} \chi_D(f)\right| \ll_{\varepsilon,q} (q^n)^{1/2} N(f)^\varepsilon.
		$$
	\end{enumerate}
\end{lemma}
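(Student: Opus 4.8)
The plan is to handle both parts by the same device: encode $\sum_{D\in\calH_n}\chi_D(f)$ as the coefficient of $u^n$ in an explicit meromorphic function built from Euler products over $\Fq[t]$, and then extract that coefficient by shifting a contour past the dominant singularity, the proven Riemann Hypothesis over function fields controlling everything that remains.

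For part (i), the key observation is that when $f$ is a perfect square every $\ord_P(f)$ is even, so $\chi_D(P)^{\ord_P(f)}$ equals $1$ when $P\nmid D$ and $0$ when $P\mid D$; therefore $\chi_D(f)$ equals $1$ if $\gcd(D,f)=1$ and $0$ otherwise, and $\sum_{D\in\calH_n}\chi_D(f)=\#\{D\in\calH_n:\gcd(D,f)=1\}$. I would compute this count from the generating identity
\[
\sum_{n\geq 0}\#\{D\in\calH_n:\gcd(D,f)=1\}\,u^n \;=\; \prod_{P\nmid f}\bigl(1+u^{\deg P}\bigr)\;=\;\frac{1-qu^2}{1-qu}\prod_{P\mid f}\frac{1}{1+u^{\deg P}},
\]
the last equality coming from \eqref{Rational_and_Euler_Product_Zeta}. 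The right-hand side is meromorphic on $|u|<1$, with a single simple pole at $u=1/q$ and all other poles on $|u|=1$. Writing it as $\tfrac{c}{1-qu}$ plus a function analytic on a disc of radius strictly larger than $q^{-1/2}$, where $c=(1-q^{-1})\prod_{P\mid f}\tfrac{N(P)}{N(P)+1}$, the main term $[u^n]\tfrac{c}{1-qu}=cq^n=\#\calH_n\prod_{P\mid f}\tfrac{N(P)}{N(P)+1}$ appears (for $n\geq 2$; $n=1$ is immediate), and Cauchy's estimate for the coefficient of $u^n$ of the analytic remainder, taken on the circle $|u|=q^{-1/2}$, produces the error $O_q\bigl((\#\calH_n)^{1/2}\bigr)$.

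For part (ii), I would first note that $g\mapsto\chi_g(f)$ is completely multiplicative and periodic modulo the radical of $f$, hence is a Dirichlet character $\psi$, and it is nontrivial precisely because $f$ is not a perfect square. Its $L$-function $L(u,\psi)=\sum_{g\in\calM}\psi(g)u^{\deg g}$ is then a polynomial in $u$ of degree at most $\deg f-1$, all of whose inverse roots have absolute value at most $q^{1/2}$ by the Riemann Hypothesis. Expanding the Euler product over squarefree $D$ exactly as above gives
\[
\sum_{n\geq 0}\Bigl(\sum_{D\in\calH_n}\chi_D(f)\Bigr)u^n \;=\;\prod_{P\nmid f}\bigl(1+\psi(P)u^{\deg P}\bigr)\;=\;\frac{(1-qu^2)\,L(u,\psi)}{\prod_{P\mid f}(1-u^{2\deg P})}.
\]
Since $\psi$ is nontrivial there is no pole at $u=1/q$, so the right-hand side is analytic on $|u|<1$; extracting the coefficient of $u^n$ by Cauchy's formula on $|u|=q^{-1/2}$, bounding $|L(u,\psi)|$ there via the Riemann Hypothesis, and bounding the denominator below by trivial estimates, yields $\bigl|\sum_{D\in\calH_n}\chi_D(f)\bigr|\ll_q (q^n)^{1/2}$ times a factor that is $\ll_{\varepsilon,q}N(f)^\varepsilon$. (Equivalently one may remove the squarefreeness of $D$ through $\mu^2(D)=\sum_{l^2\mid D}\mu(l)$ and reduce to the incomplete character sums $\sum_{\deg B=j}\psi(B)$, which vanish for $j\geq\deg f$ by orthogonality and are controlled by the Riemann Hypothesis for $j<\deg f$.)

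The substantive input in both parts is the same and already available in the excerpt — the relevant $L$-function is a polynomial (or simple rational function) whose inverse roots obey the Weil bound — so once it is invoked the rest is bookkeeping. The part that requires care, and which I expect to be the only real obstacle, is keeping the error terms uniform: one must control $|L(u,\psi)|$ on $|u|=q^{-1/2}$ in terms of the conductor of $\psi$ rather than the full modulus, and absorb the accompanying combinatorial factors (products of $(1-N(P)^{-1})^{-1}$ over $P\mid f$, and the elementary symmetric functions of the $\leq\deg f-1$ inverse roots) into the $N(f)^\varepsilon$ in (ii) and into the implied $O_q$ constant in (i).
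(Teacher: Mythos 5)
Your generating-function setup is sound and essentially matches the paper's: part (ii) is exactly the paper's Perron-formula identity $\sum_D \chi_D(f)T^{\deg D}=\calL(-T,\chi_f)(1-qT^2)/\prod_{P\mid f}(1-T^{2\deg P})$ in disguise (your $\psi(\cdot)=\chi_{\cdot}(f)$ differs from $\chi_f$ only by the reciprocity sign $(-1)^{\frac{q-1}{2}\deg f\deg(\cdot)}$, which is precisely what the $-T$ absorbs; in particular $\psi$ is \emph{not} periodic modulo $\mathrm{rad}(f)$ when $q\equiv 3\pmod 4$ and $\deg f$ is odd, though the identity and the location of the inverse roots survive since $L(u,\psi)=\calL(-u,\chi_f)$). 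But there is a real gap in part (ii), which you flag at the end and then dismiss as bookkeeping: the Riemann Hypothesis alone does \emph{not} give $|L(u,\psi)|\ll_{\varepsilon,q}N(f)^\varepsilon$ on $|u|=q^{-1/2}$. The trivial RH bound is $|L(u,\psi)|=\prod_i|1-\alpha_iu|\le 2^{g}$ with $g\le\deg\mathrm{rad}(f)-1$, i.e.\ $N(\mathrm{rad}(f))^{\log_q 2}$, and for fixed small $q$ the exponent $\log_q 2$ is a genuine positive power, not an $\varepsilon$. (In function fields, unlike number fields, RH does not automatically yield Lindel\"of: it controls where the inverse roots lie, not how many there are.) This is exactly why the paper invokes the Lindel\"of estimate of Altu\u{g}--Tsimerman (Remark 2.4), and why Bui--Florea's Lemma 3.5, which the paper relies on for the bulk of (ii), is a non-trivial input. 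Your parenthetical variant via $\mu^2(D)=\sum_{l^2\mid D}\mu(l)$ hits the same wall: the RH bound $\bigl|\sum_{\deg B=j}\psi(B)\bigr|\le\binom{g}{j}q^{j/2}$ still sums to $2^{g}q^{n/2}$.

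For part (i) there is a secondary uniformity issue. The Cauchy estimate on $|u|=q^{-1/2}$ gives an error $\ll q^{n/2}\sup_{|u|=q^{-1/2}}|G(u)|$, and that supremum carries the factor $\prod_{P\mid f}\bigl|1+u^{\deg P}\bigr|^{-1}\le\prod_{P\mid f}(1-q^{-\deg P/2})^{-1}$, which is unbounded as $f$ acquires more prime factors (it is $\ll_{\varepsilon,q}N(f)^\varepsilon$ but certainly not $O_q(1)$). So your plan to ``absorb the accompanying combinatorial factors\dots into the implied $O_q$ constant in (i)'' does not work: the lemma claims an error $O_q((\#\calH_n)^{1/2})$ \emph{uniform in $f$}, and that is what Andrade--Keating's Proposition 5.2 (which the paper cites) delivers; their error $O\!\left(\tfrac{q^n}{(q-1)q^{\lfloor n/2\rfloor}}\prod_{P\mid f}\tfrac{N(P)-1}{N(P)}\right)$ has a product that is $\le 1$, achieved by a more careful M\"obius-inversion argument rather than a crude sup bound on the circle of radius $q^{-1/2}$. (For the paper's downstream application a loss of $N(f)^\varepsilon$ in (i) would in fact be tolerable, but it does not prove the lemma as stated.)
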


To obtain these estimates, we modify and extend \cite{andrade_keating_2012}*{Proposition 5.2} and \cite{bui_florea_2018}*{Lemma 3.5} to all odd prime powers $q$.

\begin{proof}[Proof of Lemma \ref{Lemma_Estimate_On_Sums_of_Quadratic_Characters} (i)]
		
		The proof of \cite{andrade_keating_2012}*{Proposition 5.2} can be directly extended from $n \geq 1$ odd integer to any $n \geq 1$ integer.  and one gets
		$$
		\sum_{D \in \calH_n} \chi_D(f) =  \frac{q^n}{\zeta_q(2)} \prod_{P \mid f} \left(\frac{N(P)}{N(P)+1}\right) + O\left( \frac{q^n}{(q-1)q^{\lfloor n/2 \rfloor}} \prod_{P \mid f} \left(\frac{N(P)-1}{N(P) }\right)\right).
		$$
		Since $q^n/\zeta_q(2) = \#\calH_n$, $\prod_{P|f} (N(P)-1)/N(P) = O_q(1)$ and $q^n/((q-1)q^{\lfloor n/2 \rfloor} ) = O_q\left( \left(\# \calH_n\right)^{1/2} \right)$, the assertion is proven.
		\end{proof}

In \cite{bui_florea_2018}*{Lemma 3.5}, the authors proved Lemma \ref{Lemma_Estimate_On_Sums_of_Quadratic_Characters} (ii) assuming $n$ odd and, for simplicity, $q \equiv 1 \pmod{4}$. As suggested by \cite{bui_florea_2018}*{p.66}, it holds for all odd $q$. Their proof holds for all odd $q$ and all $n$ unless $q \equiv 3 \pmod{4}$ and both $n$ and $d_f$ are odd. We now consider the remaining case.

\begin{remark}\label{Remark_Lindelof}
In \cite{altug_tsimerman_2014}*{Theorem 3.3}, the authors prove that if $q$ is a prime number satisfying $q \equiv 1 \pmod{4}$ and $D \in \calH_n$ with $n \in \{2 g_D, 2 g_D + 1\}$, then the Lindel\"of bound  \begin{equation}\label{Lindelof}   \left| L\left( \frac{1}{2},\chi_D \right) \right| \leq e^{\frac{2g_D}{\log_q(g_D)} + 4 q^{\frac{1}{2}} g_D^{\frac{1}{2}}}  \end{equation}
	holds. Their proof of \eqref{Lindelof} holds for any prime power $q$. Combining \eqref{Lindelof} with the vertical periodicity of $L(s,\chi_D)$, we deduce that for all $s \in \C$ with $\re(s) = 1/2$ and all   $\varepsilon > 0$,  \begin{equation}\label{Lindelof_Epsilon}   \left| L(s, \chi_D) \right| \ll_{\varepsilon,q} (q^n)^{\varepsilon}.
	\end{equation}
\end{remark}

\begin{proof}[Proof of Lemma \ref{Lemma_Estimate_On_Sums_of_Quadratic_Characters} (ii) (when $q \equiv 3 \pmod{4}$ and both $n$ and $d_f$ are odd)]
Let $T := q^{-s}$ and $\calL(T,\chi_D) := L(s,\chi_D)$. By the quadratic reciprocity, multiplicativity of characters, and considering Euler factors of $L$-functions, we have  \begin{equation}\label{Identity_sum_characters}  \sum_{D \in \calH} \chi_D(f) T^{d_D} = \prod_{P \nmid f} \left( 1 + \chi_f(P)(-T)^{d_P} \right) = \frac{\calL(-T,\chi_f)}{\calL(T^2,\chi_f^2)} = \frac{\calL(-T,\chi_f) (1-q T^2)}{\prod_{P \mid f} \left( 1 - T^{2d_P} \right)}.
\end{equation}

Using Perron’s formula and \eqref{Identity_sum_characters}, we get  \begin{equation}\label{Perron_Formula}
	\sum_{D \in \calH_n} \chi_D(f)  = \frac{1}{2\pi i} \int_{|T| = q^{-\frac{1}{2}}} \frac{\calL(-T,\chi_f) (1-q T^2)}{T^{n+1}\prod_{P \mid f} \left( 1 - T^{2d_P} \right)}  dT.
\end{equation}

Now, write $f = f_1 f_2^2$ with $f_1 \in \calH$. Note that $\chi_f(P) = \chi_{f_1}(P)$ if $(P,f_1) = 1$, while if $P \mid f$ and $P \nmid f_1$ then $\chi_f(P) = 0$ and $\chi_{f_1}(P) \neq 0$. Hence,
$$
\calL(-T,\chi_f) = \calL(-T,\chi_{f_1}) \prod_{\substack{P \nmid f_1 \\ P \mid f_2}} \left( 1 - \chi_{f_1}(P) (-T)^{d_P} \right).
$$

Applying \eqref{Lindelof_Epsilon} to $\calL(-q^{-1/2},\chi_{f_1})$ in \eqref{Perron_Formula}, we have
\begin{equation}\label{Absolute_Value}
\left| \sum_{D \in \calH_n} \chi_D(f) \right| \ll_{\varepsilon,q} \tau(f) N(f_1)^{\frac{\varepsilon}{2}} \int_{|T| = q^{-\frac{1}{2}}} \left| \prod_{\substack{P \nmid f_1 \\ P \mid f_2}} \left( 1 - \chi_{f_1}(P) (-T)^{d_P}\right) \left( \frac{ 1 - q^2 T}{T^{n+1}}\right) \right| \mid dT  \mid,
\end{equation}
where we used that $\tau(f) = \sum_{d \mid f} d$ satisfies $\mid \prod_{P\mid f} 1/(1-T^{2d_P})\mid \leq  \tau(f)$. Now, for $\mid T \mid = q^{-\frac{1}{2}}$,
\begin{equation}\label{Bounding_Product}
\left| \prod_{\substack{P \nmid f_1 \\ P \mid f_2}} \left( 1 - \chi_{f_1}(P) (-T)^{d_P}\right) \left( \frac{ 1 - q^2 T}{T^{n+1}}\right) \right| \ll_q \prod_{\substack{P \nmid f_1 \\ P \mid f_2}} \left( 1 + q^{-\frac{d_P}{2}}\right) q^{ \frac{n}{2}} \ll_q 2^{\omega(f)} q^{\frac{n}{2}} \ll_q \tau(f) q^{\frac{n}{2}},
\end{equation}
where $\omega(f)$ is the number of distinct prime factors of $f$. As $\tau(f) \ll_\varepsilon N(f)^{\frac{\varepsilon}{2}}$, applying \eqref{Bounding_Product} into \eqref{Absolute_Value} concludes since
$$
\left| \sum_{D \in \calH_n} \chi_D(f) \right| \ll_{\varepsilon,q} q^{\frac{n}{2}} \tau(f) N(f_1)^{\frac{\varepsilon}{2}} \ll_{\varepsilon,q} \left(q^n\right)^{\frac{1}{2}} N(f)^{\varepsilon}. \qedhere
$$
\end{proof}

\subsection{Projective Setting}\label{Projective_Setting}
Fix an algebraic closure $\overline{\Fq(t)}$ of $\Fq(t)$. For any integer $n \geq 1$ and $D \in \calH_n$, let $\sqrt{D}$ be an element of $\overline{\Fq(t)}$ whose square is $D$. Let $K_D = \Fq(t)(\sqrt{D})$. The quadratic character $\psi_D$ of $\Gal(K_D/\Fq(t))$ is defined at each $P \in \calP$ by \begin{equation}\label{Artin_Character} \psi_D(P) = \begin{cases}
			-1 &\text{ if } P \text{ is inert in } K_D,\\
			0 &\text{ if } P \text{ ramifies in } K_D,\\
			1 &\text{ if } P \text{ splits in } K_D, \end{cases} \end{equation} and at the infinite place $\infty$ of $\Fq(t)$ as \begin{equation}\label{Artin_at_Infinity} \psi_D(\infty) = \begin{cases}
			0 &\text{ if } \infty \text{ ramifies in } K_D,\\
			1 &\text{ if } \infty \text{ splits in } K_D.
	\end{cases} \end{equation}
	The Artin $L$-function $L^{\mathrm{Art}}(s,\psi_D)$ attached to $\psi_D$ is defined as $$
	L^{\mathrm{Art}}(s,\psi_D) = \frac{1}{1-\psi_D(\infty)q^{-s}} \times \prod_P \frac{1}{1 - \psi_D(P)q^{-d_P s}}, $$
	which is holomorphic and non-vanishing on $\re(s) > 1$ (see \cite{rosen_2002}*{Proposition 9.15}).
	
	From the theory of characters (see \cite{rosen_2002}*{Proposition 14.9}) we have \begin{equation}\label{Identity_Zeta_Artin}  \zeta(s,K_D) = \zeta(s,\Fq(t))L^{\mathrm{Art}}(s,\psi_D).
	\end{equation}
	Also, \cite{rosen_2002}*{Propositions 14.6 and 17.7} gives \begin{equation}\label{Equation_LArtin_and_LDirichlet}
	L(s,\chi_D) =  \begin{cases}
			L^{\mathrm{Art}}(s,\psi_D) &\text{ if } \infty \text{ ramifies in } K_D, \text{ which happens when } n \text{ is odd,}\\   L^{\mathrm{Art}}(s,\psi_D)(1 - q^{-s}) &\text{ if } \infty \text{ splits in } K_D, \text{ which happens when } n \text{ is even}.
		\end{cases}
	\end{equation}
	
	Taking the logarithmic derivative of \eqref{Identity_Zeta_Artin} when $\re(s) > 1$ and using \eqref{Equation_LArtin_and_LDirichlet}, we obtain
\begin{equation}\label{Derivative_Identity_Zeta_Artin}
	\frac{\zeta'(s,K_D)}{\zeta(s,K_D)} = \begin{cases}
		\frac{\zeta'(s, \Fq(t))}{\zeta(s,\Fq(t))} + \frac{L'(s,\chi_D)}{L(s,\chi_D)} &\text{if } n \text{ is odd},\\
		\frac{\zeta'(s,\Fq(t))}{\zeta(s,\Fq(t))} + \frac{L'(s,\chi_D)}{L(s,\chi_D)} + (1-q^{-s})^{-1}&\text{if } n\text{ is even}.\\
	\end{cases}
\end{equation}
Hence, \eqref{Quadratic_Euler_Kronecker-0} follows by letting $s$ go to $1$.

\section{Computing Moments}

In this section, we compute integral moments of $-L'(1,\chi)/L(1,\chi)$. We start with a preparatory lemma that will used in Proposition \ref{Proposition_rthPower_LogDerivative} below.

	\begin{lemma}\label{Lemma_following_Apostol}
	For any real $0 < x < 1$ and positive integers $n$ and $r$, let
	$$
	\Phi(x,-r,n) := \sum_{k=0}^\infty (n+k)^r x^k.
	$$
	We have
	$$
	\mid \Phi(x, -r, n) \mid \leq \frac{(3nr)^r r!}{( 1-x)^{r+1}}.
	$$
\end{lemma}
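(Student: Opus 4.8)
The plan is to recognize $\Phi(x,-r,n)$ as essentially an $r$-th derivative (in a suitable sense) of a geometric-type series and to bound the resulting combinatorial coefficients crudely. First I would write $(n+k)^r = \sum_{j=0}^{r} \binom{r}{j} n^{r-j} k^j$ by the binomial theorem, so that
\[
\Phi(x,-r,n) = \sum_{j=0}^{r} \binom{r}{j} n^{r-j} \sum_{k=0}^\infty k^j x^k.
\]
The inner sum $\sum_{k\ge 0} k^j x^k$ is the classical polylogarithm-type series; it equals $\sum_{i=0}^{j} \stirling{j}{i} \, i! \, x^i/(1-x)^{i+1}$ using Stirling numbers of the second kind (or, more simply, one can bound it directly). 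The key quantitative fact I would use is the clean bound $\sum_{k\ge 0} k^j x^k \le j!/(1-x)^{j+1}$ for $0<x<1$, which follows because $k^j \le j!\binom{k+j}{j}$ and $\sum_{k\ge 0}\binom{k+j}{j}x^k = (1-x)^{-j-1}$; since all terms are nonnegative and $x^i \le 1$, this is immediate.

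With that inequality in hand, the estimate is routine: using $0<x<1$ so that $(1-x)^{-j-1} \le (1-x)^{-r-1}$ for $j\le r$, I get
\[
\mid \Phi(x,-r,n)\mid \le \frac{1}{(1-x)^{r+1}} \sum_{j=0}^{r} \binom{r}{j} n^{r-j} j!.
\]
Now I bound the finite sum. Since $\binom{r}{j} j! = r!/(r-j)! \le r!$ and $n^{r-j} \le n^r$ (as $n\ge 1$), each of the $r+1$ terms is at most $n^r r!$, giving $\sum_{j=0}^r \binom{r}{j}n^{r-j} j! \le (r+1) n^r r! \le (3nr)^r r!$; the last inequality holds because $(r+1)n^r \le (3nr)^r$ for all positive integers $n,r$ (indeed $3nr \ge n$ and $(3r)^r \ge 3r \ge r+1$, so $(3nr)^r = n^r(3r)^r \ge n^r(r+1)$). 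This yields exactly
\[
\mid \Phi(x,-r,n)\mid \le \frac{(3nr)^r r!}{(1-x)^{r+1}}.
\]

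The only mildly delicate point is justifying the rearrangement of the double series: since $0<x<1$ and all quantities are positive, everything converges absolutely and Tonelli/Fubini for series applies, so the interchange of $\sum_j$ and $\sum_k$ is legitimate. I expect the main (very minor) obstacle to be pinning down the cleanest elementary proof of $\sum_{k\ge 0} k^j x^k \le j!/(1-x)^{j+1}$; the $\binom{k+j}{j}$ comparison makes this painless, so in truth there is no serious obstacle — the lemma is a soft combinatorial estimate and the stated constant $(3nr)^r$ is deliberately generous. An alternative, equally short route is to note $\Phi(x,-r,n) = \left(x\frac{d}{dx}\right)^r\!\big[x^{n}/(1-x)\big] \cdot x^{-n}$ expanded out, but the binomial-theorem approach above avoids differentiating and keeps the bookkeeping trivial.
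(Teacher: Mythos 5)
Your proof is correct, and it takes a genuinely different route from the paper's. The paper invokes Apostol's closed formula (from his Lerch zeta paper) expressing $\Phi(x,-r,n)$ as a double sum over Stirling numbers of the second kind, then bounds those Stirling numbers using the inequality $\stirling{k-1}{s}\le\binom{k-1}{s}s^{k-1-s}$ due to Rennie and Dobson, and finally evaluates the resulting binomial sum to get $3^r-2^r$. Your argument instead bypasses Apostol entirely: you expand $(n+k)^r$ by the binomial theorem, bound the inner sum $\sum_{k\ge 0}k^j x^k \le j!/(1-x)^{j+1}$ via the elementary comparison $k^j\le j!\binom{k+j}{j}$ together with $\sum_k\binom{k+j}{j}x^k=(1-x)^{-j-1}$, and then estimate the finite sum $\sum_{j=0}^r\binom{r}{j}n^{r-j}j!\le (r+1)n^r r!\le(3nr)^r r!$ crudely. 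All of these steps check out (including the final numeric inequality, since $(3r)^r\ge 3r\ge r+1$), and positivity of the terms justifies the series rearrangement by Tonelli. What your approach buys is self-containedness: no appeal to external results on the Lerch zeta function or on Stirling number bounds is needed, at the cost of slightly looser intermediate estimates that still land within the generous target $(3nr)^r r!/(1-x)^{r+1}$.
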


\begin{proof}
	From \cite{Apostol_1951}*{p.164 and (3.2) and (3.7)}, $\beta_0(x) = 0$ (see \cite{Apostol_1951}*{p.165}) and $
	\frac{k}{r+1}\binom{r+1}{k} = \binom{r}{k-1}
	$, we have
	$$
	\Phi(x,-r,n) = -\sum_{k=1}^{r+1} \binom{r}{k-1}\left( \sum_{s=1}^{k-1} (-1)^s s! \frac{x^s}{(x-1)^{s+1}} \stirling{k-1}{s} \right) n^{r+1-k},
	$$
	where $\stirling{k-1}{s}$ are Stirling numbers of the second kind. Since $|x| < 1$ and $|x-1| < 1$, then
	$$
		\mid \Phi(x,-r,n) \mid \leq \frac{n^r r! }{|x-1|^{r+1}} \sum_{k=1}^{r+1} \binom{r}{k-1}\sum_{s=1}^{k-1} \stirling{k-1}{s}.
	$$
	Now $\stirling{k-1}{s} \leq \binom{k-1}{s} s^{k-1-s}$ (\cite{Rennie_Dobson_1969}*{Theorem 3}), and in the given ranges for $s$ and $k$ above we have $s^{k-1-s} < r^r$. Thus,
	$$
	\mid \Phi(x,-r,n) \mid \leq \frac{(nr)^rr! }{|x-1|^{r+1}} \sum_{k=1}^{r+1} \binom{r}{k-1}\sum_{s=1}^{k-1} \binom{k-1}{s},
	$$
	and as
	$$
	\sum_{k=1}^{r+1} \binom{r}{k-1}\sum_{s=1}^{k-1} \binom{k-1}{s} = \sum_{k=1}^{r+1} \binom{r}{k-1}(2^{k-1}-1) = 3^r-2^r,
	$$
	the result follows.
\end{proof}

We now  write powers of $-L'(s,\chi)/L(s,\chi)$ for $\re(s) > 1/2$ in terms of short Dirichlet polynomials. This is an improvement on its number field analogue \cite{lamzouri_2015}*{Proposition 2.3}.
\begin{proposition}\label{Proposition_rthPower_LogDerivative}
	Let $\chi$ be a non-trivial Dirichlet character. Let $c_0 > 0$ and $0 < \delta < c_0$ be real numbers. For any complex number $s$ with $\re(s) = c_0 + 1/2$, and any integer $r$ satisfying
	$$
	1 \leq r \leq \frac{\delta}{4}\frac{\log(q^n)}{\log(\log(q^n))},
	$$
	we have, for $n \geq \max\{1/(1-q^{-c_0}), 3r\}$,
	$$
	\left(-\frac{L'(s,\chi)}{L(s,\chi)}\right)^r = \sum_{f \in \calM_{\leq n-1}} \frac{\Lambda_r(f)\chi(f)}{N(f)^s} + O\left( (q^n)^{-c_0 + \delta} \right),
	$$
	where
	$$
	\Lambda_r(f) := \sum_{\substack{(f_1,\cdots,f_r) \in \calM^r\\ f_1 \cdots f_r = f} } \Lambda(f_1) \cdots \Lambda(f_r).
	$$
\end{proposition}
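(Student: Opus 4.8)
The plan is to expand $\bigl(-L'(s,\chi)/L(s,\chi)\bigr)^r$ into a Dirichlet series that converges on the whole half-plane $\re(s)>1/2$, split off its partial sum over $\calM_{\le n-1}$, and estimate the remaining tail using the Riemann Hypothesis for curves over $\Fq$ together with Lemma~\ref{Lemma_following_Apostol}. First I would use that, $\chi$ being non-trivial, $L(s,\chi)$ is a polynomial in $q^{-s}$ of degree at most $n-1$, say $L(s,\chi)=\prod_i\bigl(1-\alpha_i q^{-s}\bigr)$ with $|\alpha_i|\le q^{1/2}$ for all $i$ by the Riemann Hypothesis. Logarithmic differentiation gives $-L'(s,\chi)/L(s,\chi)=-\log(q)\sum_{m\ge 1}\bigl(\sum_i\alpha_i^m\bigr)q^{-ms}$, and comparing the coefficients of $q^{-ms}$ with the expansion \eqref{Equation_Log_Derivative_Dirichlet_L-Function} identifies $\sum_{f\in\calM_m}\Lambda(f)\chi(f)=-\log(q)\sum_i\alpha_i^m$, hence the basic estimate
$$\Bigl|\sum_{f\in\calM_m}\Lambda(f)\chi(f)\Bigr|\le (n-1)\log(q)\,q^{m/2}.$$
Summed by increasing degree, this shows that $\sum_f\Lambda(f)\chi(f)N(f)^{-s}$ converges absolutely for $\re(s)>1/2$, where it equals $-L'(s,\chi)/L(s,\chi)$ by analytic continuation from $\re(s)>1$ (both sides being holomorphic on $\re(s)>1/2$).

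Next I would raise this degree-grouped series to the $r$-th power: by the Cauchy product and multiplicativity of $\chi$, $\bigl(-L'(s,\chi)/L(s,\chi)\bigr)^r=\sum_f\Lambda_r(f)\chi(f)N(f)^{-s}$, with the terms of degree below $r$ vanishing. It then remains to bound the tail $\sum_{d_f\ge n}\Lambda_r(f)\chi(f)N(f)^{-s}$. Writing $b_d:=\sum_{f\in\calM_d}\Lambda_r(f)\chi(f)=\sum_{m_1+\cdots+m_r=d}\prod_{j}\bigl(\sum_{f_j\in\calM_{m_j}}\Lambda(f_j)\chi(f_j)\bigr)$ and applying the estimate above to each of the $r$ inner sums, one gets $|b_d|\le ((n-1)\log q)^r\,q^{d/2}\binom{d-1}{r-1}$, since $d$ has $\binom{d-1}{r-1}$ ordered representations as a sum of $r$ positive integers. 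With $x:=q^{-c_0}\in(0,1)$, so that $q^{1/2}\,|q^{-s}|=x$ because $\re(s)=c_0+\frac12$, the tail is therefore at most $((n-1)\log q)^r\sum_{d\ge n}x^d\binom{d-1}{r-1}$. Re-indexing $d=n+k$, using $\binom{n+k-1}{r-1}\le (n+k)^{r-1}/(r-1)!$, and invoking Lemma~\ref{Lemma_following_Apostol} (the case $r=1$ is immediate) gives $\sum_{d\ge n}x^d\binom{d-1}{r-1}\le x^n(3nr)^r/(1-x)^r$. Hence the tail is at most
$$(q^n)^{-c_0}\cdot\frac{((n-1)\log q)^r\,(3nr)^r}{(1-q^{-c_0})^r},$$
and it suffices to check that the second factor is $\ll_{q,c_0,\delta}(q^n)^{\delta}$. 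Taking logarithms, this is an inequality of the form $r\cdot O_{q,c_0}(\log n)\le\delta\log(q^n)$: the hypothesis $n\ge 1/(1-q^{-c_0})$ bounds $(1-q^{-c_0})^{-1}$ by $n$, $n\ge 3r$ gives $\log r\le\log n$, and $r\le\frac{\delta}{4}\,\log(q^n)/\log\log(q^n)$ controls $r\log n$ by $\frac{\delta}{4}\log(q^n)$, so the desired bound holds once the implied constant is allowed to absorb the boundedly many small $n$; the constant $\delta/4$ is calibrated precisely for this.

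The main obstacle is the first step, extending the Dirichlet-series identity for $-L'(s,\chi)/L(s,\chi)$ down to $\re(s)>1/2$: one has to exploit the Riemann Hypothesis for curves — that $L(s,\chi)$ is a polynomial in $q^{-s}$ with at most $n-1$ inverse roots, each of modulus at most $q^{1/2}$ — to obtain the square-root-size bound on $\sum_{f\in\calM_m}\Lambda(f)\chi(f)$. This is exactly the ingredient unavailable unconditionally in the number field setting of \cite{lamzouri_2015}, and it is what produces the optimal error term $O((q^n)^{-c_0+\delta})$ here, with its flexible range of $r$. Everything after that is bookkeeping: verifying that the geometric and combinatorial prefactors remain below $(q^n)^{\delta}$ throughout the admissible ranges of $r$ and $n$, where all three constraints in the hypothesis are used.
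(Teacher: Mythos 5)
Your proposal is correct and follows essentially the same route as the paper: exploit the polynomial structure of $L(s,\chi)$ in $q^{-s}$ together with the Riemann Hypothesis bound $|\alpha_i|\leq q^{1/2}$ to control the power-sum coefficients, expand the $r$-th power of $-L'/L$ via the Cauchy product, count compositions of $d$ into $r$ parts by $\binom{d-1}{r-1}$, invoke Lemma~\ref{Lemma_following_Apostol} for the tail beyond degree $n$, and then verify that the accumulated prefactors stay below $(q^n)^{\delta}$ using the three hypotheses on $n$ and $r$. The only cosmetic difference is that you work directly with the Dirichlet series of $-L'/L$ and pass through its absolute convergence on $\re(s)>1/2$, whereas the paper works with the equivalent power series $T\,d\log\calL/dT$ in $T=q^{-s}$; the substance is identical.
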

\begin{proof}
	Let $T := q^{-s}$. From \cite{rosen_2002}*{pp.40-41}, we have $L(s,\chi) = \calL(T,\chi) = \prod_{i=1}^{n-1}(1 - \alpha_i(\chi)T)$ and $|\alpha_i(\chi)| \in \{1, q^{1/2}\}$. For $\mid T \mid < q^{-1/2}$, we can write 
	$$
	T \frac{d\left( \log(\calL(T,\chi))\right)}{dT} =  \sum_{N=1}^\infty c_N(\chi) T^N,
	$$
	where $c_N(\chi) = -\sum_{i=1}^{n-1} \alpha_i(\chi)^N$. Then,
	\begin{equation}\label{Equation_rthpower}
	\left( T \frac{d\left( \log(\calL(T,\chi))\right)}{dT}  \right)^r = \sum_{N=r}^{n-1} \left( \sum_{\substack{(N_1,\cdots,N_r) \\ N_1 + \cdots + N_r = N}} \prod_{i=1}^r c_{N_i}(\chi)\right) T^N + \sum_{N=n}^{\infty} \left( \sum_{\substack{(N_1,\cdots,N_r) \\ N_1 + \cdots + N_r = N}} \prod_{i=1}^r c_{N_i}(\chi)\right) T^N.
	\end{equation}
	
	We start with the tail of \eqref{Equation_rthpower}. Since $\mid \alpha_i(\chi) \mid \leq q^{\frac{1}{2}}$ for $1 \leq i \leq n - 1$, then $\mid c_N(\chi) \mid \leq (n-1) q^{\frac{N}{2}}$ and therefore
	\begin{equation}\label{Equation_First_Bound}
		\left| \sum_{\substack{(N_1,\cdots,N_r) \\ N_1 + \cdots + N_r = N}} \prod_{i=1}^r c_{N_i}(\chi) \right| \leq \binom{N-1}{r-1}(n-1)^r q^{\frac{N}{2}} = \frac{(n-1)^r}{(r-1)!}(N - 1) \cdots (N -r +1) q^{\frac{N}{2}}.
	\end{equation}
	
	Let $x := 
	\mid q^{1/2}T \mid = q^{-c_0}$. Note that
	$$
	\frac{1}{(r-1)!} \sum_{N=n}^\infty (N - 1) \cdots (N -r +1)x^N \leq \frac{1}{(r-1)!} \sum_{N=n}^\infty N^{r-1} x^N.
	$$
	Then, by Lemma \ref{Lemma_following_Apostol}, we have
	\begin{equation}\label{Equation_Two_Conditions}
	\frac{1}{(r-1)!} \sum_{N=n}^\infty N^{r-1} x^N = \frac{1}{(r-1)!} x^n \Phi(x,-(r-1),n) \leq \frac{(3n(r-1))^{r-1}}{(1-x)^r}x^n.
	\end{equation}
	
	Hence, from \eqref{Equation_First_Bound} and \eqref{Equation_Two_Conditions}, we bound the tail of \eqref{Equation_rthpower} by
	\begin{equation}\label{Equation_Final_Tail}
		\sum_{N=n}^{\infty} \left| \sum_{\substack{(N_1,\cdots,N_r) \\ N_1 + \cdots + N_r = N}} \prod_{i=1}^r c_{N_i}(\chi)\right| |T|^N \leq (n-1)^r\left( \frac{(3n(r-1))^{r-1}}{(1-q^{-c_0})^r} \right)\left(q^{-c_0}\right)^n .
	\end{equation}

	We now focus on the head of \eqref{Equation_rthpower}. Since $c_N(\chi) = (1/\log(q))\sum_{f \in \calM_N}
	\Lambda(f) \chi(f)$ by \cite{rosen_2002}*{p.42, first displayed equation}, then
	\begin{equation}\label{Equation_Head_Computation}
	\sum_{N=r}^{n-1} \left( \sum_{\substack{(N_1,\cdots,N_r) \\ N_1 + \cdots + N_r = N}} \prod_{i=1}^r c_{N_i}(\chi)\right) T^N = \frac{1}{\log^r(q)}\sum_{f \in \calM_{\leq n-1}} \Lambda_r(f)\chi(f) T^{d_f}.
	\end{equation}
	
	Observe that
	$$
	T \frac{d\left( \log(\calL(T,\chi))\right)}{dT}  = -\frac{1}{\log(q)}\frac{L'(s,\chi)}{L(s,\chi)}.
	$$
	Hence, substituting \eqref{Equation_Final_Tail} and \eqref{Equation_Head_Computation} in  \eqref{Equation_rthpower} gives
	$$
	\left( -\frac{L'(s,\chi)}{L(s,\chi)}\right)^r = \sum_{f \in \calM_{\leq n-1}} \frac{\Lambda_r(f)\chi(f)}{N(f)^s} + O\left(  \frac{(n-1)^r(3n(r-1))^{r-1}\left( q^{-c_0}\right)^n\log^r(q)}{(1-q^{-c_0})^r}  \right).
	$$
	Now, for $n \geq \max\{1/(1-q^{-c_0}), 3r\}$, we have
	$$
	\left| \frac{(n-1)^r(3n(r-1))^{r-1}\left( q^{-c_0}\right)^n\log^r(q)}{(1-q^{-c_0})^r} \right| \ll_{c_0} \frac{\log^r(q) (3r)^r n^{3r}}{\left( q^{c_0} \right)^n} \ll_{c_0} \frac{\log^r(q) n^{4r}}{\left( q^{c_0} \right)^n}.
	$$
	Hence, for $n \geq \max\{1/(1-q^{-c_0}), 3r\}$, any $s \in \C$ with fixed $\re(s) = 1/2 + c_0 > 1/2$ satisfies
	$$
	\left( -\frac{L'(s,\chi)}{L(s,\chi)}\right)^r = \sum_{f \in \calM_{\leq n-1}} \frac{\Lambda_r(f)\chi(f)}{N(f)^s} + O\left( \frac{\log^{4r}(q^n)}{q^{n c_0}}\right).
	$$
	We conclude that
	$$
	\left( -\frac{L'(s,\chi)}{L(s,\chi)}\right)^r = \sum_{f \in \calM_{\leq n-1}} \frac{\Lambda_r(f)\chi(f)}{N(f)^s} + O\left( (q^n)^{-c_0+\delta}\right)
	$$
	because $\log^{4r}(q^n)/(q^{nc_0}) \leq (q^n)^{-c_0 + \delta}$ if and only if $r \leq (\delta/4)\log(q^n)/(\log(\log(q^n)))$.
	
\end{proof}
Next, we compute integral moments uniformly in the range of $r$ in Proposition \ref{Proposition_rthPower_LogDerivative} with respect to some parameters (For a number field analogue, see \cite{lamzouri_2015}*{ Theorem 2.1}.).

\begin{proposition}\label{Proposition_Average_Power_LprimeL}
	For a real number $0 < \delta < 1/2$ and integer $1 \leq r \leq \delta\log(q^n)/(4\log(\log(q^n)))$,
	\begin{equation}\label{Equation_Integral_Moments}
		\frac{1}{\# \calH_n} \sum_{D \in \calH_n} \left(-\frac{L'(1,\chi_D)}{L(1,\chi_D)}\right)^r = \sum_f \frac{\Lambda_r(f^2)}{N(f)^2} \prod_{P|f} \left( \frac{N(P)}{N(P)+1} \right) + O_{q,\delta}\left( (q^n)^{-\frac{1}{2}+\delta} \right).
	\end{equation}
\end{proposition}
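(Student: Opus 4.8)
\textit{Proof plan.} The plan is to pass to the short Dirichlet polynomial of Proposition~\ref{Proposition_rthPower_LogDerivative}, average it over $\calH_n$, and extract the main term with Lemma~\ref{Lemma_Estimate_On_Sums_of_Quadratic_Characters}. First I would apply Proposition~\ref{Proposition_rthPower_LogDerivative} with $s=1$, i.e.\ $c_0=1/2$ and the same $\delta$: this is legitimate because $0<\delta<1/2=c_0$, each $\chi_D$ with $D\in\calH_n$ is a non-trivial Dirichlet character, and the hypotheses $1\le r\le(\delta/4)\log(q^n)/\log\log(q^n)$ and $n\ge\max\{1/(1-q^{-1/2}),\,3r\}$ hold for all large $n$ (the finitely many small $n$ being absorbed into the implied constant). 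This gives, uniformly in $D\in\calH_n$,
\[
\left(-\frac{L'(1,\chi_D)}{L(1,\chi_D)}\right)^r=\sum_{f\in\calM_{\le n-1}}\frac{\Lambda_r(f)\chi_D(f)}{N(f)}+O((q^n)^{-1/2+\delta}).
\]
Dividing by $\#\calH_n$, summing over $D\in\calH_n$, and interchanging the two finite sums, one obtains
\[
\frac{1}{\#\calH_n}\sum_{D\in\calH_n}\left(-\frac{L'(1,\chi_D)}{L(1,\chi_D)}\right)^r=\sum_{f\in\calM_{\le n-1}}\frac{\Lambda_r(f)}{N(f)}\cdot\frac{1}{\#\calH_n}\sum_{D\in\calH_n}\chi_D(f)+O((q^n)^{-1/2+\delta}).
\]
Since $\Lambda_r(1)=0$ for $r\ge1$, only $f\in\calM\smallsetminus\{1\}$ contribute, so Lemma~\ref{Lemma_Estimate_On_Sums_of_Quadratic_Characters} applies termwise; the next step is to split the $f$-sum according to whether $f$ is a perfect square in $\Fq[t]$ or not.

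For the non-square $f$, Lemma~\ref{Lemma_Estimate_On_Sums_of_Quadratic_Characters}(ii) together with $\#\calH_n\gg_q q^n$ gives $|(\#\calH_n)^{-1}\sum_{D}\chi_D(f)|\ll_{\varepsilon,q}(q^n)^{-1/2}N(f)^{\varepsilon}$, so this part is $\ll_{\varepsilon,q}(q^n)^{-1/2}\sum_{f\in\calM_{\le n-1}}\Lambda_r(f)N(f)^{-1+\varepsilon}$. A standard computation from \eqref{Log_Derivative_Zeta_von_Mangoldt} gives $\sum_{f\in\calM_N}\Lambda_r(f)\ll_q q^N\binom{N-1}{r-1}(\log q)^r$, whence $\sum_{f\in\calM_{\le n-1}}\Lambda_r(f)N(f)^{-1+\varepsilon}\ll_q n\,(q^n)^{\varepsilon}((\log q)n)^{r}$. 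Because $r\le(\delta/4)\log(q^n)/\log\log(q^n)$, we have $((\log q)n)^{r}=\exp(r\log\log(q^n)+O(r))\le(q^n)^{\delta/4+o(1)}$, so choosing $\varepsilon$ a small enough multiple of $\delta$ bounds the non-square contribution by $O_{q,\delta}((q^n)^{-1/2+\delta})$.

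For the square $f$, write $f=g^2$ with $g\in\calM$, so $d_g\le(n-1)/2$ and $\prod_{P|f}=\prod_{P|g}$; Lemma~\ref{Lemma_Estimate_On_Sums_of_Quadratic_Characters}(i) gives $(\#\calH_n)^{-1}\sum_{D}\chi_D(g^2)=\prod_{P|g}\tfrac{N(P)}{N(P)+1}+O_q((\#\calH_n)^{-1/2})$. Thus the square part equals
\[
\sum_{\substack{g\in\calM\\ d_g\le(n-1)/2}}\frac{\Lambda_r(g^2)}{N(g)^2}\prod_{P|g}\frac{N(P)}{N(P)+1}+O_q\Bigl((q^n)^{-1/2}\sum_{\substack{g\in\calM\\ d_g\le(n-1)/2}}\frac{\Lambda_r(g^2)}{N(g)^2}\Bigr),
\]
and it remains to replace the truncated main sum by the full, convergent series $\sum_f\Lambda_r(f^2)N(f)^{-2}\prod_{P|f}\tfrac{N(P)}{N(P)+1}$ and to estimate the error sum. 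Both reduce to a uniform-in-$r$ bound for $\Lambda_r$ on squares: reading $\Lambda_r(g^2)$ as a sum, over ordered factorizations $g^2=h_1\cdots h_r$ into prime powers, of $(\log q)^r$ times the product of the degrees of the underlying primes, and applying weighted AM--GM to those degrees (whose appropriately weighted sum is $\le2d_g$), the bound $\binom{2e_j-1}{k_j-1}\le(2e_j)^{k_j-1}$ for the number of compositions, and $\sum_{\vec k}\binom{r}{\vec k}\le\omega(g)^r$ for the distribution of slots among primes, I expect to get $\Lambda_r(g^2)\le(4(\log q)\,d_g^{3})^{r}$, together with the vanishing $\Lambda_r(g^2)=0$ unless $d_g\ge r/2$. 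Then $\sum_{g\in\calM_d}\Lambda_r(g^2)\le q^{d}(4(\log q)d^{3})^{r}$, and since $(n-1)/2\gg r$ in our range the tail $\sum_{d>(n-1)/2}q^{-2d}\sum_{g\in\calM_d}\Lambda_r(g^2)$ is controlled by its first term: it is $\ll q^{-(n-1)/2}(4(\log q)n^{3})^{r}\ll_{q}q^{-(n-1)/2}(q^n)^{3\delta/4+o(1)}\ll_{q,\delta}(q^n)^{-1/2+\delta}$. The error sum above is bounded in the same way. Combining the three estimates yields \eqref{Equation_Integral_Moments}.

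I expect the main obstacle to be this last passage from the truncated sum to the full series, i.e.\ bounding $\sum_{d>(n-1)/2}q^{-2d}\sum_{g\in\calM_d}\Lambda_r(g^2)$ uniformly in $r$. The crude bound $\sum_{g\in\calM_d}\Lambda_r(g^2)\le\sum_{h\in\calM_{2d}}\Lambda_r(h)=q^{2d}\binom{2d-1}{r-1}(\log q)^r$ loses a whole power $q^{d}$ and makes the tail diverge, so one must genuinely use that the argument is a perfect square of degree bounded in terms of $n$ and that the constraint $\sum_j k_j=r$ keeps the exponents $e_j$ in the factorization small. The delicate point is to reach polynomial-in-$d_g$ growth of $\Lambda_r(g^2)$ with \emph{no} spurious factor of the form $C^{\Omega(g)}$ (which would destroy the estimate for small $q$), so that the $(q^n)^{1/2}$-saving supplied by Lemma~\ref{Lemma_Estimate_On_Sums_of_Quadratic_Characters} survives multiplication by the combinatorial factor $((\log q)\,\mathrm{poly}(n))^{r}=(q^n)^{O(\delta)}$.
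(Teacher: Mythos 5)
Your overall structure matches the paper's: apply Proposition~\ref{Proposition_rthPower_LogDerivative} with $c_0 = 1/2$, average over $D \in \calH_n$, split the $f$-sum into squares and non-squares, use Lemma~\ref{Lemma_Estimate_On_Sums_of_Quadratic_Characters} to evaluate the square part and to bound the non-square part, and finally extend the truncated square sum to the full convergent series. That part of the plan is sound.

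The place where you substantially overcomplicate things — and where you yourself flag a ``delicate point'' — is the bound on $\Lambda_r$. There is no need for a weighted AM--GM over ordered factorizations, a bound on compositions, or a count of slot-distributions among primes, and there is certainly no risk of a spurious $C^{\Omega(g)}$ factor. The clean pointwise estimate is simply
\[
\Lambda_r(f) \le \log^r N(f) \quad \text{for all } f \in \calM,
\]
which follows by a one-line induction from the Chebyshev identity $\sum_{g \mid f} \Lambda(g) = \log N(f)$: indeed $\Lambda_r = \Lambda_{r-1} * \Lambda$, so $\Lambda_r(f) = \sum_{g \mid f} \Lambda_{r-1}(g)\Lambda(f/g) \le \log^{r-1}N(f)\sum_{g \mid f}\Lambda(f/g) = \log^r N(f)$. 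Applied with $f$ replaced by $g^2$, this gives $\Lambda_r(g^2) \le 2^r \log^r N(g) = 2^r(\log q)^r d_g^r$, which is polynomial in $d_g$ of degree only $r$ (not $3r$), carries no factor depending on $\Omega(g)$, and is exactly what is needed to compare the tail $\sum_{d_g > (n-1)/2}$ with the range $r \ll \log(q^n)/\log\log(q^n)$. (Once one notes that $2^r(\log q)^r x^r / q^{x/2}$ is decreasing for $x$ past a fixed multiple of $r$, the tail is $\ll_q \log^r(q^n)/q^{(n-1)/2} \ll_q (q^n)^{\delta/4 - 1/2}$, and the non-square contribution is bounded directly by $\Lambda_r(f) \le \log^r(q^n) \le (q^n)^{\delta/4}$ without computing $\sum_{f \in \calM_N}\Lambda_r(f)$.) Your proposed combinatorial bound $(4(\log q)d_g^3)^r$, if it can be pinned down, would also suffice for the final estimate since the exponent of $d_g$ is still $O(r)$, but it is more work than required, and you have not actually verified that all the pieces (the binomial bound, the distribution of slots) combine to something free of $C^{\Omega(g)}$ --- that worry simply dissolves once you use the inductive bound.

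One minor remark: the identity $\sum_{f \in \calM_N} \Lambda_r(f) = \binom{N-1}{r-1} q^N (\log q)^r$ that you invoke for the non-square part is correct (it follows from $\sum_{f \in \calM_m}\Lambda(f) = q^m \log q$), but it is an unnecessary detour; the pointwise bound above gives the same conclusion more directly.
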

\begin{proof}
	From Proposition \ref{Proposition_rthPower_LogDerivative} with $c_0 = 1/2$, we have
	\begin{equation}\label{Moments_Approximation}
		\frac{1}{\#\calH_n} \sum_{D \in \calH_n} \left(-\frac{L'(1,\chi_D)}{L(1,\chi_D)}\right)^r = \frac{1}{\#\calH_n}\sum_{D \in \calH_n} \sum_{f \in \calM_{\leq n-1}} \frac{\Lambda_r(f)\chi_D(f)}{N(f)} + O\left( (q^n)^{-\frac{1}{2} + \delta} \right).
	\end{equation}
	
	We now calculate the main term on the right-hand side of \eqref{Moments_Approximation}, beginning with the contribution of the square terms. By Lemma \ref{Lemma_Estimate_On_Sums_of_Quadratic_Characters} (i), we have
	\begin{equation}\label{All_Squares}
	\frac{1}{\#\calH_n}\sum_{D \in \calH_n} \sum_{\substack{f \in \calM_{\leq n-1},\\ f \text{ square}}} \frac{\Lambda_r(f)\chi_D(f)}{N(f)} = \sum_{f \in \calM_{\leq \frac{n-1}{2}}} \frac{\Lambda_r(f^2)}{N(f)^2} \prod_{P|f} \left(\frac{N(P)}{N(P)+1}\right) + O_q\left( \frac{1}{(\#\calH_n)^{1/2}} \right).
	\end{equation}
	
Because $2^r \log^r(x)/x^{1/2}$ decreases for real $x \geq e^{2r}$ and $\Lambda_r(f^2) \leq 2^r \log^r(N(f))$, then
	\begin{equation}\label{Large_Square}
	\sum_{f \in \calM_{> \frac{n-1}{2}}} \frac{\Lambda_r(f^2)}{N(f)^2} \prod_{P|f} \left( \frac{N(P)}{N(P)+1}\right) \ll_q \frac{\log^r(q^{n-1}) }{q^{\frac{n-1}{2}}} \ll_q \left( q^n \right)^{\frac{\delta}{4} - \frac{1}{2}}.
	\end{equation}

	Next, we analyze the effect of non-square terms on the right-hand side of  \eqref{Moments_Approximation}. As
	$\Lambda_r(f) \leq \log^r(q^n) \leq (q^n)^{\frac{\delta}{4}}$, then for $\varepsilon > 0$, we have, by \eqref{Equation_Finite_Sum_of_Norms} and Lemmas \ref{Lemma_Estimate_On_Sums_of_Quadratic_Characters} (ii) and \ref{Lemma_Frequent_Computations},
	\begin{equation}\label{All_Non_Squares}
	\frac{1}{\#\calH_n}\sum_{D \in \calH_n} \sum_{\substack{f \in \calM_{\leq n-1},\\ f \text{ non-square}}} \frac{\Lambda_r(f)\chi_D(f)}{N(f)} \ll_{\varepsilon,q} (q^n)^{\frac{\delta}{4}-\frac{1}{2}+\varepsilon}.
	\end{equation}
	Employing \eqref{Cardinality_H_n}, \eqref{All_Squares}, \eqref{Large_Square}, and \eqref{All_Non_Squares} into \eqref{Moments_Approximation} yields
	$$
	\frac{1}{\# \calH_n} \sum_{D \in \calH_n} \left( - \frac{L'(1,\chi_D)}{L(1,\chi_D)}\right)^r = 	\sum_f \frac{\Lambda_r(f^2)}{N(f)^2} \prod_{P|f} \left( \frac{N(P)}{N(P)+1} \right) + O_{\varepsilon,q}\left( (q^n)^{\frac{\delta}{4}-\frac{1}{2}+\varepsilon} + (q^n)^{\delta - \frac{1}{2}} \right).
	$$
	Choosing any $0 < \varepsilon < 3\delta/4$ yields the result.
\end{proof}

\begin{corollary}\label{Corollary_Estimate_Average_Bounded_by_One} For any fixed $r \geq 1$, we have
	$$
	\frac{1}{\#\calH_n} \sum_{D \in \calH_n} \left( -\frac{L'(1,\chi_D)}{L(1,\chi_D)} \right)^r \ll_{q,r} 1.
	$$
\end{corollary}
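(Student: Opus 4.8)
The plan is to read the bound off Proposition \ref{Proposition_Average_Power_LprimeL}, handling large and small $n$ separately. Fix the integer $r\ge 1$ and set $\delta=1/4$. Since $\log(q^n)/\log(\log(q^n))\to\infty$ as $n\to\infty$, there is an $n_0=n_0(q,r)$ such that $r\le \delta\log(q^n)/(4\log(\log(q^n)))$ for all $n\ge n_0$; for these $n$, Proposition \ref{Proposition_Average_Power_LprimeL} applies and gives
\[
\frac{1}{\#\calH_n}\sum_{D\in\calH_n}\left(-\frac{L'(1,\chi_D)}{L(1,\chi_D)}\right)^{r}
=\sum_f\frac{\Lambda_r(f^2)}{N(f)^2}\prod_{P\mid f}\left(\frac{N(P)}{N(P)+1}\right)+O_q\left((q^n)^{-1/4}\right),
\]
whose error term is $\ll_q 1$ since $(q^n)^{-1/4}\le q^{-1/4}<1$.

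Next I would bound the main term, which is independent of $n$. Each factor $\prod_{P\mid f}(N(P)/(N(P)+1))$ lies in $(0,1]$, and $\Lambda_r(f^2)\le 2^r\log^r(N(f))$ — the elementary estimate already invoked in the proof of Proposition \ref{Proposition_Average_Power_LprimeL} (it follows by induction on $r$ from $\sum_{g\mid h}\Lambda(g)=\log(N(h))$). Grouping monic polynomials by degree and using $\#\calM_d=q^d$,
\[
0\le\sum_f\frac{\Lambda_r(f^2)}{N(f)^2}\prod_{P\mid f}\left(\frac{N(P)}{N(P)+1}\right)
\le 2^r\sum_{d\ge 1}\frac{q^d\,(d\log q)^r}{q^{2d}}
=2^r(\log q)^r\sum_{d\ge 1}\frac{d^r}{q^d},
\]
and the last series converges to a real number depending only on $q$ and $r$. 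Hence the average is $\ll_{q,r}1$ for every $n\ge n_0$.

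For the finitely many $n$ with $1\le n<n_0$ it remains to observe that each summand is finite: by the proven Riemann Hypothesis for function fields $L(s,\chi_D)=\prod_{i=1}^{n-1}(1-\alpha_i(\chi_D)q^{-s})$ with $|\alpha_i(\chi_D)|\le q^{1/2}$, so $L(1,\chi_D)\ne 0$ and $(-L'(1,\chi_D)/L(1,\chi_D))^r$ is a well-defined real number; since $\#\calH_n<\infty$ for each such $n$ and there are only finitely many of them, the maximum of the corresponding finitely many averages is $\ll_{q,r}1$. Combining the two ranges of $n$ proves the corollary. I do not anticipate a real obstacle here — all the analytic content is already contained in Proposition \ref{Proposition_Average_Power_LprimeL}, and the only points needing a line of care are the bound $\Lambda_r(f^2)\le 2^r\log^r(N(f))$ and the convergence of $\sum_{d\ge 1}d^r q^{-d}$, both of which are routine.
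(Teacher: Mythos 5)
Your argument is correct and is essentially the paper's own: apply Proposition~\ref{Proposition_Average_Power_LprimeL} to reduce to the $n$-independent main term, then dominate it via $\Lambda_r(f^2)\le 2^r\log^r(N(f))$ and the convergent series $\sum_{d\ge 1}d^r q^{-d}$. The only difference is that you spell out the $\delta=1/4$ choice and the finitely-many-small-$n$ case, which the paper leaves implicit.
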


\begin{proof}
	It is enough to note that the main term of \eqref{Equation_Integral_Moments} is bounded from the above by
	$$
	\sum_f \frac{\Lambda_r(f^2)}{N(f)^2} \leq \sum_f \frac{ 2^r \log^r(N(f))}{N(f)^2} = 2^r\log^r(q) \sum_{m=0}^\infty \frac{m^r}{q^m} \ll_{q,r} 1.
	$$
\end{proof}
We next compute the Laplace transform of $-L'(1,\chi_D)/L(1,\chi_D)$. This will play a key part in Proposition \ref{Proposition_Probabilistic_Estimation_Exponential_Series} as a step towards establishing a limiting distribution problem.

\begin{proposition}\label{Proposition_Estimation_Exponential_Series}
	There is a real $m_{q,\delta} > 0$ such that for $s \in \C$ with $$
	|s| \leq m_{q,\delta} \frac{\log(q^n)}{\log^2(\log(q^n))},
	$$
	we have
	$$
	\frac{1}{\# \calH_n} \sum_{D \in \calH_n} \exp\left( s\left(- \frac{L'(1,\chi_D)}{L(1,\chi_D)}\right) \right) = \sum_{r=0}^N \frac{s^r}{r!} \sum_f \frac{\Lambda_r(f^2)}{N(f)^2} \prod_{P|f} \left( \frac{N(P)}{N(P)+1}\right) + O_{\delta,q}\left( (q^n)^{\frac{-\delta}{4\log(\log(q^n))}}\right),
	$$
	where $N := \left\lfloor \delta\log(q^n)/(4\log(\log(q^n))) \right\rfloor$ and $0 < \delta < 1/2$.
\end{proposition}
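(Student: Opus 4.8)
The plan is to expand the exponential into its power series and split the resulting $r$-sum at the index $N$. Writing $X_D := -L'(1,\chi_D)/L(1,\chi_D)$, and noting that $\calH_n$ is finite while $s\mapsto e^{sX_D}$ is entire (so the interchange of summations is legitimate), one gets
\[
\frac{1}{\#\calH_n}\sum_{D\in\calH_n}\exp(sX_D) = \sum_{r=0}^{N}\frac{s^r}{r!}\,\frac{1}{\#\calH_n}\sum_{D\in\calH_n}X_D^{r} \;+\; \sum_{r=N+1}^{\infty}\frac{s^r}{r!}\,\frac{1}{\#\calH_n}\sum_{D\in\calH_n}X_D^{r}.
\]
I would then treat the head and the tail separately.

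For the head, $0\le r\le N$, every index lies in the admissible range $1\le r\le \delta\log(q^n)/(4\log(\log(q^n)))$ of Proposition~\ref{Proposition_Average_Power_LprimeL} (the term $r=0$ being the trivial identity $\frac{1}{\#\calH_n}\sum_{D}1 = 1 = \sum_f \Lambda_0(f^2)N(f)^{-2}\prod_{P\mid f}\frac{N(P)}{N(P)+1}$, since $\Lambda_0$ is supported on $f=1$). Substituting \eqref{Equation_Integral_Moments} term by term, the main terms reassemble into $\sum_{r=0}^N \frac{s^r}{r!}\sum_f \frac{\Lambda_r(f^2)}{N(f)^2}\prod_{P\mid f}\frac{N(P)}{N(P)+1}$, which is exactly the claimed main term, while the accumulated error is
\[
\ll_{q,\delta}(q^n)^{-\frac12+\delta}\sum_{r=0}^{N}\frac{|s|^r}{r!}\le (q^n)^{-\frac12+\delta}e^{|s|}\le (q^n)^{-\frac12+\delta+\frac{m_{q,\delta}}{\log^2(\log(q^n))}}.
\]
Since $\delta<1/2$ is fixed, for $n$ large this is a genuine power saving, say $\ll_{q,\delta}(q^n)^{-(1/2-\delta)/2}$, which is in particular $\ll (q^n)^{-\delta/(4\log(\log(q^n)))}$; so the head contributes the main term plus an acceptable error.

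For the tail, $r>N$, I would not average the moments but instead use the pointwise bound of Lemma~\ref{Lemma_Ihara_Upper_Bound} at $\re(s)=1$: there is $C_q>0$ with $|X_D|\le C_q\log(\log(q^n))$ for every $D\in\calH_n$, whence $\frac{1}{\#\calH_n}\sum_{D}|X_D|^r\le (C_q\log(\log(q^n)))^r$. Setting $y:=|s|\,C_q\log(\log(q^n))\le m_{q,\delta}C_q\,\log(q^n)/\log(\log(q^n))$ and using $r!\ge (r/e)^r$, the tail is at most $\sum_{r>N}(ey/r)^r$. Because $N\ge \delta\log(q^n)/(4\log(\log(q^n)))-1$, choosing $m_{q,\delta}$ small enough in terms of $q$ (through $C_q$) and $\delta$ forces $ey/N\le e^{-1}$ for all large $n$; the tail then collapses into a convergent geometric series of total size $\ll e^{-N}\ll (q^n)^{-\delta/(4\log(\log(q^n)))}$. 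Adding the head and tail estimates yields the proposition.

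The main obstacle is the tail estimate: the precise exponent $-\delta/(4\log(\log(q^n)))$ in the error is dictated by the value of $N$, and securing it requires $ey/N$ to stay bounded away from $1$ uniformly in $n$, which is exactly what pins down the admissible size of $m_{q,\delta}$ and forces its dependence on $q$ (via Ihara's bound in Lemma~\ref{Lemma_Ihara_Upper_Bound}) and on $\delta$. Once that is arranged, the factorial decay in $r!$ overwhelms the growth of $|s|^r$, and the head is comparatively routine, since Proposition~\ref{Proposition_Average_Power_LprimeL} already supplies a power-saving error for each individual moment.
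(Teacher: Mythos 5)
Your proposal is correct and follows the same route as the paper: expand the exponential as a Maclaurin series, split at $r=N$, apply Proposition~\ref{Proposition_Average_Power_LprimeL} to the head, and control the tail using the pointwise Ihara bound from Lemma~\ref{Lemma_Ihara_Upper_Bound} together with Stirling's inequality $1/r!<(e/r)^r$ to force $E_1\ll e^{-N}$, which pins down the admissible size of $m_{q,\delta}$ in terms of $C_q$ and $\delta$. The paper chooses $m_{q,\delta}\le\delta/(8e^2C_q)$ explicitly, but the substance is identical.
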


\begin{proof}
	Consider the Maclaurin expansion
	\begin{equation}\label{Equation_Deriving_Laplace_Transform}
		\exp\left(s\left(-\frac{L'(1,\chi_D)}{L(1,\chi_D)}\right)\right) = \sum_{r=0}^\infty \frac{s^r}{r!}\left( - \frac{L'(1,\chi_D)}{L(1,\chi_D)}\right)^r.
	\end{equation}
	
	From Stirling's approximation formula, we have $1/r! < (e/r)^r$ and by Lemma \ref{Lemma_Ihara_Upper_Bound}, there is a constant $C_q > 0$ such that $|L'(1,\chi_D)/L(1,\chi)| \leq C_q \log(\log(q^n)$. Combining these facts with the assumption $r > N$ yields
	$$
	E_1 := \sum_{r=N+1}^\infty \frac{s^r}{r!}\left( - \frac{L'(1,\chi_D)}{L(1,\chi_D)}\right)^r \leq \sum_{r=N+1}^\infty \left( \frac{C_q e |s| \log(\log(q^n)) }{N} \right)^r.
	$$
	Fix any real number $0 < m_{q,\delta} \leq \delta/(8 e^2 C_q)$. Since $1/N \leq 8\log(\log(q^n))/(\delta \log(q^n))$, then for any $s \in \C$ satisfying  $|s| \leq m_{q,\delta} \log(q^n)/\log^2(\log(q^n))$ we have
	\begin{equation}\label{Bound_on_E1}
	E_1 \leq \sum_{r=N+1}^\infty \left( \frac{e C_q m_{q,\delta}}{N} \frac{\log(q^n)}{\log(\log(q^n))}\right)^r \leq \sum_{r = N+1}^\infty \left( \frac{8 e C_q m_{q,\delta}}{\delta}\right)^r \leq \left(\frac{8 e C_q m_{q,\delta}}{\delta}\right)^N \leq e^{-N}.
	\end{equation}
	
	It follows from \eqref{Bound_on_E1} and Proposition \ref{Proposition_Average_Power_LprimeL} that
	\begin{align*}
		\frac{1}{\#\calH_n} \sum_{D \in \calH_n} \exp\left(s\left(-\frac{L'(1,\chi_D)}{L(1,\chi_D)}\right)\right)  &= \sum_{r=0}^N \frac{s^r}{r!} \sum_f \frac{\Lambda_r(f^2)}{N(f)^2} \prod_{P|f} \left( \frac{N(P)}{N(P)+1}\right)\\&+ O_{q,\delta}\left( \sum_{r=0}^N \frac{|s|^r}{r!} (q^n)^{-\frac{1}{2}+\delta} \right) + O\left( (q^n)^{\frac{-\delta}{4\log(\log(q^n))}} \right),
	\end{align*}
	which concludes the proof.
\end{proof}


\section{Probabilistic Model}
Let $(\pbx_P)_{P \in \calP}$ be a sequence of independent random variables valued in $\{-1,0,1\}$ with probabilities
$$
\prob\left( \pbx_P= a\right) =
\begin{cases}
	\frac{N(P)}{2(N(P)+1)} &\text{ if } a = -1,\\
	\frac{1}{N(P)+1} &\text{ if } a = 0,\\
	\frac{N(P)}{2(N(P)+1)} &\text{ if } a = 1.\\
\end{cases}
$$
We extend multiplicatively the above sequence to all of $\mathcal{M}$ by setting $\pbx_f := \prod_{P|f} \pbx_P^{\ord_P(f)}$ for each $f = \prod_{P|f} P^{\ord_P(f)}$ in $\mathcal{M}$, where $\ord_P(f)$ is the $P$-adic valuation of $f$. Observe that for $f \in \calM$ we have
\begin{equation}\label{Equation_Expectation_X_f}
	\Esp\left( \pbx_f \right) = \prod_{P|f} \Esp\left( \pbx_P^{\ord_P(f)} \right) =
	\begin{cases}
		\prod_{P|f} \left( \frac{N(P)}{N(P)+1} \right)&\text{ if } f \text{ is a square in } \Fq[t],\\
		0 &\text{otherwise}.
	\end{cases}
\end{equation}

\begin{lemma}\label{Equation_Equality_Almost_Surely}
The random series
	$$
		-\frac{L'(1,\pbx)}{L(1,\pbx)} :=\sum_f \frac{\Lambda(f)}{N(f)}\pbx_f  \text{ and } \sum_P \frac{\log(N(P))\pbx_P}{N(P)-\pbx_P}
	$$
	are almost surely convergent. Moreover, they are almost surely equal.
\end{lemma}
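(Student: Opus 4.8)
The plan is to establish almost sure convergence of the second series first (its summands, indexed by $P$, are independent), and then transfer the conclusion to the first series by comparing partial sums. Write $Y_P := \log(N(P))\,\pbx_P/(N(P)-\pbx_P)$, order the primes by degree (breaking ties arbitrarily), and set $T_n := \sum_{d_P \le n} Y_P$. Since $|\pbx_P| \le 1 < N(P)$ we have $|Y_P| \le \log(N(P))/(N(P)-1) \ll \log(N(P))/N(P) \to 0$, so $|Y_P| \le 1$ for all but finitely many $P$. Evaluating against the three-point law of $\pbx_P$ gives
$$
\Esp(Y_P) = \frac{N(P)\log(N(P))}{(N(P)+1)(N(P)^2-1)} \ll \frac{\log(N(P))}{N(P)^2},
$$
where it is crucial to keep the cancellation between the atoms at $+1$ and $-1$, since the trivial bound $\Esp(|Y_P|) \ll \log(N(P))/N(P)$ is \emph{not} summable; also $\mathrm{Var}(Y_P) \le \Esp(Y_P^2) \ll \log^2(N(P))/N(P)^2$. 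Grouping primes by degree and using $\pi_q(d) \le \#\calM_d = q^d$ (or \eqref{Prime_Number_Theorem}), both $\sum_P |\Esp(Y_P)|$ and $\sum_P \mathrm{Var}(Y_P)$ are $\ll \sum_{d \ge 1} q^d\, d^2(\log q)^2 / q^{2d} < \infty$. Kolmogorov's three-series theorem (or the two-series theorem, the summands being uniformly bounded) then shows that $T_n$ converges almost surely to a random variable $Y$.

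Next I would compare partial sums. For each $P$ the geometric series $\sum_{m \ge 1} \log(N(P))\,\pbx_P^m/N(P)^m$ converges, with sum $Y_P$, because $|\pbx_P/N(P)| < 1$; and since $\Lambda(f) = 0$ unless $f = P^m$, the degree-$\le n$ partial sum of the first series is
$$
S_n := \sum_{f \in \calM_{\le n}} \frac{\Lambda(f)}{N(f)}\pbx_f = \sum_{d_P \le n}\ \sum_{m=1}^{\lfloor n/d_P \rfloor} \frac{\log(N(P))}{N(P)^m}\pbx_P^m .
$$
Consequently
$$
|T_n - S_n| \le \sum_{d_P \le n}\ \sum_{m > \lfloor n/d_P \rfloor} \frac{\log(N(P))}{N(P)^m} \ll \sum_{d_P \le n} \frac{\log(N(P))}{N(P)^{\lfloor n/d_P\rfloor + 1}} .
$$
Since $d_P\big(\lfloor n/d_P\rfloor + 1\big) \ge \max(n, 2d_P) \ge n/2 + d_P$, grouping by degree and using $\pi_q(d) \le q^d$ bound the last sum by $\ll q^{-n/2}(\log q)\sum_{d=1}^n d \ll n^2(\log q)\,q^{-n/2}$, which tends to $0$ for every sample point. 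Hence $S_n = T_n + o(1) \to Y$ almost surely, which means the first series converges almost surely and has limit $\sum_P Y_P$ almost surely.

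The main obstacle is the first step, and more precisely the fact that the first series is \emph{not} absolutely convergent — indeed $\sum_f \Lambda(f)/N(f)$ diverges, like $\sum_P \log(N(P))/N(P)$ — so one cannot merely dominate it: one must regroup the sum prime by prime to produce independence, and one must retain the $\pm 1$ cancellation in $\Esp(Y_P)$ rather than bounding it crudely. Everything else reduces to routine estimates of the type in Lemma \ref{Lemma_Frequent_Computations}.
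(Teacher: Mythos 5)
Your proof is correct, and it takes a genuinely different route from the paper's. The paper works over the half-plane $\re(s)>1/2$: for $\re(s)>1$ both series are absolutely convergent and the prime-by-prime regrouping is elementary; a.s.\ convergence of the $P$-indexed series for $\re(s)>1/2$ is obtained via Kolmogorov, and the resulting identity is then transported from $\re(s)>1$ to a neighbourhood of $s=1$ using the fact (Kowalski, Lemma A.4.1) that the a.s.\ convergent random Dirichlet series define a.s.\ analytic functions on a half-plane. Your argument is more elementary and stays entirely at $s=1$: you get a.s.\ convergence of $\sum_P Y_P$ from the two-/three-series theorem (correctly retaining the cancellation between the $\pm1$ atoms so that $\Esp(Y_P)\ll\log N(P)/N(P)^2$, since the crude bound is not summable), and then you dispense with analytic continuation by showing $|T_n-S_n|\to 0$ \emph{deterministically} (for every sample point), using the explicit estimate $d_P(\lfloor n/d_P\rfloor+1)\ge n/2+d_P$ and the prime-counting bound. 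This direct tail comparison is, if anything, the cleaner way to see that the degree-ordered partial sums of $\sum_f \Lambda(f)\pbx_f/N(f)$ — whose summands are not independent — actually converge to the same limit as the independent prime sum; it buys you an explicit, sample-pointwise rate $O(n^2 q^{-n/2})$ where the paper instead leans on an abstract analyticity lemma. What you give up, relative to the paper, is the uniform statement over $\re(s)>1/2$, but that is not needed for the lemma as stated.
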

\begin{proof}
Over $\re(s) > 1$, the random series $\sum_f \frac{\Lambda(f)}{N(f)^s}\pbx_f$ and $\sum_P \frac{\log(N(P))}{N(P)^s - \pbx_P}$ converge absolutely, and we can write
\begin{equation}\label{Equation_Equality_Absolute_Convergence}
\sum_f \frac{\Lambda(f)}{N(f)^s}\pbx_f = \sum_{n=1}^\infty \sum_P \frac{\log(N(P))}{N(P)^{ns}}\pbx_P^n = \sum_P \frac{\log(N(P))\pbx}{N(P)^s-\pbx_P}.
\end{equation}

For $\re(s)>1/2$ and $P \in \calP$, define the random variables $\pby_{P,s} := \frac{\log(N(P))}{N(P)^s}\pbx_P$ and $\pbw_{P,s} := \frac{\log(N(P))\pbx_P}{N(P)^s - \pbx_P}$.

By Kolmogorov Theorem (see \cite{kowalski_2021}*{Theorem B.10.1}) the series $\sum_P \pby_{P,s}$ and $
\sum_P \pbw_{P,s}$ converge almost surely.

Next, for $\re(s)>1$, the random series identities
\begin{equation}
	\label{add-identity}
	\sum_P \frac{\log(N(P))}{N(P)^s - \pbx_P}
	=\sum_f \frac{\Lambda(f)}{N(f)^s}\pbx_f = \sum_P \frac{\log(N(P))}{N(P)^s}\pbx_P + O(1)
\end{equation}
hold. Since for a fixed $1/2 < \tau < 1$ the random series in \eqref{add-identity} are almost surely convergent, then by
\cite{kowalski_exponential_2021}*{Lemma A.4.1}, they define almost surely analytic functions on the half-plane $\re(s)>\tau$. Hence, \eqref{add-identity} extends to $\re(s) > \tau$, and thus the claimed assertion holds.
\end{proof}

\section{Comparing Laplace Transforms}
We now obtain a uniform bound on the expected value of the integral moments of $-\frac{L'(1,\pbx)}{L(1,\pbx)}$. It is the exact analogue of the number field version \cite{lamzouri_2015}*{Proposition 3.1}.
\begin{proposition}\label{Proposition_Upper_Bound_First_Moment_rth_Power_Logarithmic_Derivative}
There is a constant $c_q > 0$, such that for all integers $r \geq 8$ we have
\begin{equation}\label{Equation_Upper_Bound_Integral_Moment_Log_Derivative}
\Esp\left( \left| -\frac{L'(1,\pbx)}{L(1,\pbx)} \right|^r \right) \leq c_q^r \log^r(r).
\end{equation}
\end{proposition}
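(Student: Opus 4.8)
The plan is to split the almost surely convergent random series $\pby := -L'(1,\pbx)/L(1,\pbx) = \sum_f \Lambda(f)\pbx_f/N(f)$ (see Lemma \ref{Equation_Equality_Almost_Surely}) at a degree threshold $y = y(r)$, of size $\asymp_q \log r$ and to be fixed only at the end, writing $\pby = \pby_1 + \pby_2$ with $\pby_1 := \sum_{d_f \le y}\Lambda(f)\pbx_f/N(f)$ and $\pby_2 := \pby - \pby_1$. By convexity of $x \mapsto x^r$ we have $\Esp(|\pby|^r) \le 2^{r-1}\big(|\pby_1|^r + \Esp(|\pby_2|^r)\big)$, so it suffices to control the two pieces separately. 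The short piece is deterministic: since $|\pbx_f| \le 1$ and $\Lambda \ge 0$, identity \eqref{Equation_Average_Sum_von_Mangoldt_Same_degree} gives the pointwise bound $|\pby_1| \le \sum_{N(f) \le q^{\lfloor y \rfloor}} \Lambda(f)/N(f) = \lfloor y \rfloor$, whence $|\pby_1|^r \le y^r$.

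For the tail, since $\Lambda$ is supported on prime powers we regroup $\pby_2 = \sum_P V_P$, where $V_P := \sum_{m \ge m_P} \log(N(P))\,\pbx_P^m/N(P)^m$ and $m_P := \min\{m \ge 1 : m\,d_P > y\}$; the variables $V_P$ are independent, each depending only on $\pbx_P$. Summing the geometric series and using $N(P)^{m_P} > q^{y}$ yields $|V_P| \le 2\log(N(P))/N(P)^{m_P}$, so $a := \sup_P |V_P| \ll_q y\,q^{-y}\log q$ and $W := \sum_P \Esp(V_P^2) \le \sum_P 4\log^2(N(P))/N(P)^{2m_P} \ll_q y\,q^{-y}(\log q)^2$; here the contribution of primes with $d_P \le y$ is handled via $N(P)^{2m_P} > q^{2y}$, and that of primes with $d_P > y$ (for which $m_P = 1$) via the prime count \eqref{Prime_Number_Theorem}. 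Moreover $\Esp(\pbx_P^m)$ vanishes for odd $m$ and equals $N(P)/(N(P)+1)$ for even $m$, so only even powers survive in $\Esp(\pby)$ and $\Esp(\pby_1)$ and $|\Esp(\pby_2)| = |\Esp(\pby) - \Esp(\pby_1)| \le \Esp(\pby) = \sum_P N(P)\log(N(P))/\big((N(P)+1)(N(P)^2-1)\big) \ll_q 1$.

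A Chernoff/Bernstein estimate for the independent centered variables $V_P - \Esp(V_P)$, each bounded by $2a$ and with $\sum_P \Esp\big((V_P - \Esp V_P)^2\big) \le W$, then gives $\prob(|\pby_2 - \Esp\pby_2| > t) \le 2\exp\big({-}\min(t^2/(4W),\, t/(4a))\big)$; integrating $\Esp(|\pby_2 - \Esp\pby_2|^r) = \int_0^\infty r t^{r-1}\prob(|\pby_2 - \Esp\pby_2| > t)\,dt$ over the Gaussian range $t \le W/a$ and the exponential range $t \ge W/a$ produces $\Esp(|\pby_2 - \Esp\pby_2|^r) \ll (Wr)^{r/2} + (ar)^r$, so $\Esp(|\pby_2|^r) \le 2^{r-1}\big(|\Esp\pby_2|^r + \Esp(|\pby_2 - \Esp\pby_2|^r)\big) \ll_q 2^r\big(C_q^r + (Wr)^{r/2} + (ar)^r\big)$. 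Finally, choosing $y := C_0\log(r)/\log q$ with $C_0$ a large enough absolute constant makes $q^y = r^{C_0}$; then $Wr$ and $ar$ are both $\ll_q \log(q)\log(r)/r^{C_0-1} \ll_q 1$ for every integer $r \ge 8$, hence $\Esp(|\pby_2|^r) \ll_q B_q^r$ for some $B_q = B_q(q) > 0$. Combining this with $|\pby_1|^r \le (C_0\log(r)/\log q)^r$ in $\Esp(|\pby|^r) \le 2^{r-1}(|\pby_1|^r + \Esp(|\pby_2|^r))$ and using $\log r \ge \log 8 > 2$ for $r \ge 8$ to absorb the constant-to-the-$r$ factors, we obtain $\Esp(|\pby|^r) \le c_q^r\log^r(r)$ for a suitable $c_q = c_q(q) > 0$.

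The delicate point — and the main obstacle — is extracting the correct dependence on $r$ from the tail: the cutoff $y \asymp \log r$ must at once keep $|\pby_1|$ of size $O_q(\log r)$ \emph{and} force the variance proxy $W$ and the uniform bound $a$ of the tail below $1/r$, so that the unavoidable $r^r$-type loss in the $r$-th moment of a sum of independent variables is beaten by the factor $q^{-y} = r^{-C_0}$. Establishing the exponentially small prime-sum bounds for $W$ and $a$, and checking that all the estimates remain valid uniformly down to $r = 8$ (the hypothesis $r \ge 8$ serving precisely to guarantee $\log r \ge 2$), are the only steps that require genuine care; everything else is routine.
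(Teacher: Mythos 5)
Your proof is correct and reaches the same conclusion, but it travels a different road than the paper's. Both arguments hinge on splitting the series at a degree threshold of size $\asymp_q \log r$ and beating the $r^r$-type loss in the tail by the ensuing polynomial saving $q^{-y}\approx r^{-C_0}$. After that the two proofs diverge. The paper applies Minkowski's inequality twice to control the head pointwise (giving $\ll_q \log(q^n)$), then uses the Cauchy--Schwarz trick $\Esp(|S|^r)\le \big(\Esp S^2\big)^{1/2}\big(\Esp S^{2(r-1)}\big)^{1/2}$ on the tail $S$, reducing everything to a direct combinatorial evaluation of even moments $\Esp(S^{2m})$ via \eqref{Equation_Expectation_X_f} and the elementary bound $\Lambda_{2m,n}(f^2)\le 2^{2m}\log^{2m}N(f)$; no concentration inequality is needed, and the exponent bookkeeping is light. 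You instead use the convexity inequality $(a+b)^r\le 2^{r-1}(a^r+b^r)$, regroup the tail into the independent per-prime blocks $V_P$, invoke a Bernstein/Chernoff tail bound for $\sum_P(V_P-\Esp V_P)$, and integrate $rt^{r-1}\prob(\cdot>t)$ over the Gaussian and exponential ranges. That is a heavier but perfectly legitimate toolbox, and your estimates of the bracket $a$ and variance proxy $W$, and the final optimization $y=C_0\log(r)/\log q$ with $C_0\ge 2$, are all sound. The one point you pass over quickly is that Bernstein's inequality must be carried to the infinite sum $\sum_P V_P$ by a standard limiting argument on the partial sums; this is routine (the partial sums converge a.s.\ to $\pby_2$ and the bound is uniform in the number of terms) but deserves a sentence. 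Two cosmetic nits: the inequality $\Esp(|\pby|^r)\le 2^{r-1}(|\pby_1|^r+\Esp|\pby_2|^r)$ should read $\Esp(|\pby_1|^r)$ in the first slot (you immediately replace it by the almost-sure bound $y^r$, so no harm is done), and the absolute-constant-to-the-$r$th factors coming out of the $\Gamma$-integrals should be tracked just enough to see they are $\le C^r$ and hence absorbed by $c_q^r$, as you implicitly do. The net comparison: the paper's route is more elementary and shorter, yours is closer to the textbook concentration framework and would scale more easily if one wanted subgaussian tail bounds rather than just moment bounds.
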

\begin{proof}
Let $n \geq 1$ be an integer to be chosen. Employing Minkowski's inequality yields
\begin{align*}
\Esp\left(  \left| -\frac{L'(1,\pbx)}{L(1,\pbx)} \right|^r \right)^{1/r} &\leq \Esp\left( \left| \sum_{f \in \calM_{\leq n}} \frac{\Lambda(f)}{N(f)} \pbx_f \right|^r \right)^{1/r} + \Esp\left( \left| \sum_{f \in \calM_{>n}} \frac{\Lambda(f)}{N(f)} \pbx_f \right|^r \right)^{1/r}\\
&\leq \sum_{f \in \calM_{\leq n}} \frac{\Lambda(f)}{N(f)} \Esp\left( \left|  \pbx_f \right|^r \right)^{1/r} + \Esp\left( \left| \sum_{f \in \calM_{> n}} \frac{\Lambda(f)}{N(f)} \pbx_f \right|^r \right)^{1/r}.
\end{align*}
By \eqref{Equation_Average_Sum_von_Mangoldt_Same_degree} we conclude that
\begin{equation}\label{Equation_Estimate}
	\Esp\left(  \left| -\frac{L'(1,\pbx)}{L(1,\pbx)} \right|^r \right)^{1/r} \ll_q \log(q^n) + \Esp\left( \left| \sum_{f \in \calM_{> n}} \frac{\Lambda(f)}{N(f)} \pbx_f \right|^r \right)^{1/r}.
\end{equation}

Applying the Cauchy-Schwarz inequality gives
\begin{equation}\label{Equation_Applying_Cauchy-Schwarz}
\Esp\left( \left| \sum_{f \in \calM_{> n}} \frac{\Lambda(f)}{N(f)} \pbx_f \right|^r \right) \leq \Esp\left(\left( \sum_{f \in \calM_{> n}} \frac{\Lambda(f)}{N(f)} \pbx_f \right)^2\right)^{1/2} \Esp\left(\left( \sum_{f \in \calM_{> n}} \frac{\Lambda(f)}{N(f)} \pbx_f \right)^{2(r-1)}\right)^{1/2}.
\end{equation}
Let
$$
\Lambda_{r,n}(f) := \sum_{\substack{f_1,\cdots,f_r \in \calM_{> n} \\ f_1 \cdots f_r  = f}} \Lambda(f_1) \cdots \Lambda(f_r).
$$
Using \eqref{Equation_Expectation_X_f} and $\Lambda_{r,n}(f) \leq \Lambda_r(f) \leq \log^r(f)$, we have
$$
\Esp\left(\left( \sum_{f \in \calM_{> n}} \frac{\Lambda(f)}{N(f)} \pbx_f \right)^{2m}\right) = \sum_{f \in \calM_{> mn}} \frac{(\Lambda_{2m, n}(f^2))^r}{N(f)^2}\Esp(\pbx_{f^2}) \leq \sum_{f \in \calM_{> mn}} \frac{2^{2m}\log^{2m}(N(f))}{N(f)^2}.
$$	
		Since $f(x)=2^{2m}\log^{2m}(x)/x^{1/2}$ is decreasing for $x \geq e^{4m}$, then 
		for $q^n \geq e^4$, we have
		$$
		\sum_{f \in \calM_{> mn}} \frac{2^{2m}\log^{2m}(N(f))}{N(f)^2} \leq \frac{(2m)^{2m}\log^{2m}(q^n)}{(q^n)^{m/2}} \sum_{f \in \calM_{> mn}} \frac{1}{N(f)^{3/2}} = \frac{ (2m)^{2m}\log^{2m}(q^n)}{(q^{1/2}-1)(q^n)^m}.
		$$
		Hence,
		$$
		\Esp\left(\left( \sum_{f \in \calM_{> n}} \frac{\Lambda(f)}{N(f)} \pbx_f \right)^{2m}\right) \ll_q \frac{ (2m)^{2m}\log^{2m}(q^n)}{(q^n)^m}.
		$$
		Employing this estimate in \eqref{Equation_Applying_Cauchy-Schwarz}, for $m=1$ and $m=r-1$, yields
\begin{equation}\label{Equation_Estimate_First_Moment_Euler_Kronecker}
\Esp\left( \left| \sum_{f \in \calM_{> n}} \frac{\Lambda(f)}{N(f)} \pbx_f \right|^r\right)^{1/r} \ll_q \frac{ r\log(q^n) }{(q^n)^{1/2}}.
\end{equation}

Choosing $q^n \geq r^2$ in  \eqref{Equation_Estimate_First_Moment_Euler_Kronecker} and substituting this estimate in  \eqref{Equation_Estimate} gives \eqref{Equation_Upper_Bound_Integral_Moment_Log_Derivative}.
	\end{proof}
	
We next compare the Laplace transforms of $-L'(1,\chi_D)/L(1,\chi_D)$ and $-\frac{L'(1,\pbx)}{L(1,\pbx)}$.
	
\begin{proposition}\label{Proposition_Probabilistic_Estimation_Exponential_Series}
		Let $0 < \delta < 1/2$ be a real number. There is a constant $M_{q,\delta} > 0$ such that for all $s \in \C$ with
		$$
		|s| \leq M_{q,\delta}\frac{\log(q^n)}{\log^2(\log(q^n))},
		$$
		we have
		$$
		\frac{1}{\# \calH_n} \sum_{D \in \calH_n} \exp\left( s\left(- \frac{L'(1,\chi_D)}{L(1,\chi_D)}\right) \right) =
		\Esp\left( \exp\left( s\left(-\frac{L'(1,\pbx)}{L(1,\pbx)} \right)\right) \right)
		+ O_{q,\delta}\left( (q^n)^{\frac{-\delta}{4\log(\log(q^n))}} \right),
		$$
		where $N := \left\lfloor\delta\log(q^n)/(4\log(\log(q^n))) \right\rfloor$.
	\end{proposition}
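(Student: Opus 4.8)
The plan is to match the two Laplace transforms term-by-term through their common power-series expansion, using Proposition \ref{Proposition_Estimation_Exponential_Series} on the arithmetic side and a parallel expansion on the probabilistic side, then control the tails uniformly. First I would take any $0 < \delta < 1/2$ and set $N := \lfloor \delta \log(q^n)/(4\log(\log(q^n)))\rfloor$ as in the statement, and choose $M_{q,\delta} := m_{q,\delta}$ from Proposition \ref{Proposition_Estimation_Exponential_Series} (possibly shrunk further as the argument dictates). For $s$ in the allowed disc, Proposition \ref{Proposition_Estimation_Exponential_Series} already gives
$$
\frac{1}{\#\calH_n}\sum_{D \in \calH_n} \exp\!\left(s\left(-\frac{L'(1,\chi_D)}{L(1,\chi_D)}\right)\right) = \sum_{r=0}^N \frac{s^r}{r!}\sum_f \frac{\Lambda_r(f^2)}{N(f)^2}\prod_{P|f}\left(\frac{N(P)}{N(P)+1}\right) + O_{q,\delta}\!\left((q^n)^{\frac{-\delta}{4\log(\log(q^n))}}\right),
$$
so it remains to show the truncated sum agrees with $\Esp(\exp(s(-L'(1,\pbx)/L(1,\pbx))))$ up to an error of the same order.

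Next I would expand the probabilistic Laplace transform. Writing $\exp(s(-L'(1,\pbx)/L(1,\pbx))) = \sum_{r=0}^\infty \frac{s^r}{r!}(-L'(1,\pbx)/L(1,\pbx))^r$ and taking expectations (justified by dominated convergence, since Proposition \ref{Proposition_Upper_Bound_First_Moment_rth_Power_Logarithmic_Derivative} bounds the moments), I get $\Esp(\exp(\cdots)) = \sum_{r=0}^\infty \frac{s^r}{r!}\Esp((-L'(1,\pbx)/L(1,\pbx))^r)$. The key identity is that the $r$-th moment of the random series equals the main term of Proposition \ref{Proposition_Average_Power_LprimeL}: expanding $(-L'(1,\pbx)/L(1,\pbx))^r = (\sum_f \Lambda(f)\pbx_f/N(f))^r = \sum_f \Lambda_r(f)\pbx_f/N(f)$ and using \eqref{Equation_Expectation_X_f}, only the square $f$ survive, giving $\Esp((-L'(1,\pbx)/L(1,\pbx))^r) = \sum_f \Lambda_r(f^2) N(f)^{-2}\prod_{P|f}(N(P)/(N(P)+1))$. (Convergence and the interchange of $\Esp$ with the infinite sum are routine since $\Lambda_r(f^2) \leq 2^r\log^r(N(f))$, exactly as in Corollary \ref{Corollary_Estimate_Average_Bounded_by_One}.) Thus the truncated arithmetic sum is precisely $\sum_{r=0}^N \frac{s^r}{r!}\Esp((-L'(1,\pbx)/L(1,\pbx))^r)$, and the remaining task is to bound the tail $\sum_{r>N}\frac{s^r}{r!}\Esp((-L'(1,\pbx)/L(1,\pbx))^r)$.

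For the tail, I would split at some threshold: the moments of small index contribute nothing new once they are already included, so I bound $\left|\sum_{r=N+1}^\infty \frac{s^r}{r!}\Esp\!\left(\left(-\frac{L'(1,\pbx)}{L(1,\pbx)}\right)^r\right)\right| \leq \sum_{r=N+1}^\infty \frac{|s|^r}{r!}\Esp\!\left(\left|\frac{L'(1,\pbx)}{L(1,\pbx)}\right|^r\right)$. By Proposition \ref{Proposition_Upper_Bound_First_Moment_rth_Power_Logarithmic_Derivative}, for $r \geq 8$ this is $\leq \sum_{r=N+1}^\infty \frac{|s|^r c_q^r \log^r(r)}{r!}$, and Stirling ($1/r! < (e/r)^r$) turns the summand into $(e c_q |s| \log(r)/r)^r$. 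Since $r > N \gg_\delta \log(q^n)/\log(\log(q^n))$ and $|s| \leq M_{q,\delta}\log(q^n)/\log^2(\log(q^n))$, we have $\log(r) \ll \log(\log(q^n))$ and $|s|\log(r)/r \ll M_{q,\delta}$, so for $M_{q,\delta}$ small enough (e.g. $e c_q M_{q,\delta}' < 1/2$ for a suitable rescaled constant) each term is at most $2^{-r}$ and the geometric tail is $\ll 2^{-N} = (q^n)^{-\delta\log(2)/(4\log(\log(q^n)))}$, which is absorbed into the stated error (possibly after adjusting the constant in the exponent, or one simply notes $2^{-N} \leq (q^n)^{-\delta/(4\log(\log(q^n)))}$ fails only by the factor $\log 2$, so I'd instead take $M_{q,\delta}$ small enough that $e c_q M_{q,\delta}'' < e^{-1}$ giving a clean $e^{-N}$ bound, dominated by $(q^n)^{-\delta/(4\log(\log(q^n)))}$). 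I expect the main obstacle to be purely bookkeeping: carefully chaining the three constants $m_{q,\delta}$ (Proposition \ref{Proposition_Estimation_Exponential_Series}), $c_q$ (Proposition \ref{Proposition_Upper_Bound_First_Moment_rth_Power_Logarithmic_Derivative}) and the final $M_{q,\delta}$ so that the tail bound and the allowed range of $s$ are simultaneously compatible, and handling the low-index moments $r < 8$ not covered by Proposition \ref{Proposition_Upper_Bound_First_Moment_rth_Power_Logarithmic_Derivative} separately (these are finitely many and contribute $O_{q}(|s|^r) = O_{q,\delta}((q^n)^{o(1)})$ type terms that, for $r \leq N$, are already in the main sum and need no estimate, while for the tail they are irrelevant since $N \to \infty$).
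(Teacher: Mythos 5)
Your proposal is correct and follows essentially the same route as the paper's proof: both expand the probabilistic Laplace transform in power series, identify the truncated sum from Proposition \ref{Proposition_Estimation_Exponential_Series} with $\sum_{r=0}^N \frac{s^r}{r!}\Esp\bigl((-L'(1,\pbx)/L(1,\pbx))^r\bigr)$ via \eqref{Equation_Expectation_X_f}, and bound the tail $\sum_{r>N}$ using Proposition \ref{Proposition_Upper_Bound_First_Moment_rth_Power_Logarithmic_Derivative}, Stirling's inequality $1/r! < (e/r)^r$, the monotonicity of $\log(r)/r$, and a geometric-series estimate that forces a suitably small choice of $M_{q,\delta}$, yielding the clean $e^{-N}$ bound you anticipate. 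Your concern about the moments with $r<8$ (outside the stated range of Proposition \ref{Proposition_Upper_Bound_First_Moment_rth_Power_Logarithmic_Derivative}) is harmless because $N+1>8$ for all large $n$, so the tail only involves indices covered by that proposition.
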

	
	\begin{proof}
		By Proposition \ref{Proposition_Upper_Bound_First_Moment_rth_Power_Logarithmic_Derivative}, the inequality $1/r! < (e/r)^r$, monotonicity of $f(r)=\log(r)/r$, and $r > N$, we have
		\begin{equation}\label{Equation_Bounding_Error_Estimate}
			\sum_{r=N+1}^\infty \frac{|s|^r}{r!}\Esp\left(\left| -\frac{L'(1,\pbx)}{L(1,\pbx)}\right|^r \right) \leq \sum_{r=N+1}^\infty\left( \frac{e c_q \log(N)|s|}{N} \right)^r.
\end{equation}

We now choose any real $0 < M_{q,\delta} \leq \delta/(16e^2c_q\log(\delta))$. Since $N \geq \delta \log(q^n)) /(8 \log(\log(q^n)))$, then for any $s \in \C$ satisfying $|s| \leq M_{q,\delta} \log(q^n)/(\log^2(\log(q^n))$ we have, for large $n$,
\begin{equation}\label{Equation_Approximation_Term}
\frac{e c_q \log(N) |s|}{N} \leq \frac{8 e c_q M_{q,\delta}}{\delta}\left( \frac{\log(\delta)}{\delta \log(\log(q^n))} + \frac{1}{\delta}\right) \leq e^{-1}.
\end{equation}

Now, employing \eqref{Equation_Approximation_Term} in \eqref{Equation_Bounding_Error_Estimate}, gives
\begin{equation}\label{add-eight}
\left| \Esp\left( \sum_{r=N+1}^\infty \frac{1}{r!} \cdot \left( s \cdot\left(-\frac{L'(1,\pbx)}{L(1,\pbx)}\right)\right)^r \right)\right| \leq e^{-N}.
\end{equation}

From the Maclaurin expansion of the exponential function, the linearity of $E$, \eqref{Equation_Expectation_X_f} and \eqref{add-eight} we have
		\begin{align*}
			\Esp\left( \exp\left( s\left(-\frac{L'(1,\pbx)}{L(1,\pbx)} \right)\right) \right)
			&= 
			\sum_{r=0}^N \frac{s^r}{r!} \sum_f \frac{\Lambda_r(f^2)}{N(f)^2} \prod_{P|f} \left( \frac{N(P)}{N(P)+1}\right)  + O\left( (q^n)^{\frac{-\delta}{4\log(\log(q^n))}} \right).
		\end{align*}
		
		This identity together with Proposition \ref{Proposition_Estimation_Exponential_Series} imply the result.
	\end{proof}
\section{Exponential Decay}
The characteristic function $\Phi_{\rand}$ of $-\frac{L'(1,\pbx)}{L(1,\pbx)}$ is given for $u \in \R$ by
\begin{equation}\label{Characteristic_Function_rand}
\Phi_{\rand}(u) := \Esp\left( \exp\left( iu \left( -\frac{L'(1,\pbx)}{L(1,\pbx)}\right) \right) \right).
\end{equation}

Our next result implies that  $\Phi_{\rand}(u)$ decays exponentially as $|u|$ goes to infinity. Hence, the distribution function $F_\rand$ of $-\frac{L'(1,\pbx)}{L(1,\pbx)}$ has a density function $M_{\rand}$, given by the Fourier inversion formula
$$
M_{\rand}(y) = \frac{1}{2\pi} \int_{-\infty}^\infty \exp(-iyu)\Phi_{\rand}(u)dy,
$$
that is smooth (see \cite{folland_1999}*{Theorem 8.22 (d)}) and
\begin{equation}\label{density}
F_\rand(x) = \int_{-\infty}^x M_{\rand}(y) dy
\end{equation}
with $\sup_{x \in \R} F_\rand'(x)=\sup_{x\in \R} M_{\rand}(x) $ finite, where $F'_\rand$ is the derivative of $F_\rand$.
\begin{proposition}\label{Proposition_Exponential_Decay}
	For $0 < \varepsilon < 1$, there is a real $C_{q,\varepsilon} > 0$ such that for large $|u|$ we have
	$$
	\left| \Phi_{\rand}(u) \right| \leq \exp\left( - C_{q,\varepsilon} |u|^{1-\varepsilon} \right).
	$$
\end{proposition}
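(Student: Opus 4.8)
The plan is to bound $|\Phi_{\rand}(u)|$ by discarding all but a well-chosen subfamily of the independent factors. Since the $\pbx_P$ are independent, the characteristic function factors as
$$
\Phi_{\rand}(u) = \prod_P \Esp\left( \exp\left( iu \frac{\log(N(P))\pbx_P}{N(P) - \pbx_P} \right) \right),
$$
using the almost-sure identity of Lemma \ref{Equation_Equality_Almost_Surely}. Each factor has modulus at most $1$, so it suffices to show that the product over primes $P$ of degree exactly $d$, for $d$ in a suitable range depending on $|u|$, is bounded by $\exp(-C_{q,\varepsilon}|u|^{1-\varepsilon})$. For a single $P$ of degree $d$, since $\pbx_P = \pm 1$ each with probability $N(P)/(2(N(P)+1))$ and $\pbx_P = 0$ otherwise, the expectation is
$$
\frac{1}{N(P)+1} + \frac{N(P)}{N(P)+1}\cos\left( u \frac{\log(N(P))}{N(P) - 1} \cdot \frac{N(P)}{\text{\small(sign-dependent)}} \right),
$$
more precisely $\frac{1}{N(P)+1} + \frac{N(P)}{N(P)+1}\cos\left( \frac{u d \log q}{N(P)-1}\right)$ after symmetrizing the $\pm 1$ contributions (the two signs give arguments $\pm u d\log(q) N(P)/(N(P)^2-1)$... one should be careful here, but the cosine is even so only the even part survives and one gets a genuine cosine of a real argument). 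Thus each factor equals $1 - \frac{N(P)}{N(P)+1}\left(1 - \cos\theta_P\right)$ with $\theta_P \asymp u d / q^d$, giving modulus $\le \exp\left( -c\, \|\theta_P\|^2\right)$ when $\theta_P$ is small, or $\le 1 - c$ when $\theta_P$ is bounded away from $2\pi\Z$.

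The key step is the choice of the degree $d = d(u)$. I would take $d$ comparable to $\log_q|u| - (1-\varepsilon)\log_q\log_q|u|$ or thereabouts, so that $q^d \asymp |u| / (\log|u|)^{1-\varepsilon}$ and hence $\theta_P \asymp u d/q^d \asymp d (\log|u|)^{1-\varepsilon} \gg 1$: the arguments are then not small, so each of the $\pi_q(d) \asymp q^d/d \asymp |u|/(d(\log|u|)^{1-\varepsilon})$ factors of that degree contributes a bounded multiplicative saving $1 - c < 1$, provided $\theta_P$ stays away from multiples of $2\pi$. To guarantee the latter I would instead let $d$ range over a short interval of lengths $[d_0, 2d_0]$ (with $d_0 = d_0(u)$ as above): for each prime, $\theta_P$ lies in $[\,|u| d_0/q^{2d_0}, \,2|u| d_0/q^{d_0}\,]$, an interval spanning many periods, so a positive proportion of these primes have $\cos\theta_P \le 1/2$, say, each contributing a factor $\le 1 - \tfrac{1}{4}$. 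Summing the logarithms over these primes yields $\log|\Phi_{\rand}(u)| \le -c'\sum_{d_0 \le d \le 2d_0} q^d/d \asymp -c' q^{d_0}/d_0 \asymp -c'' |u|/(\log|u|)^{2-\varepsilon}$, which is even stronger than required; relaxing the choice of $d_0$ to trade off between the number of primes and the proportion in the "good" cosine range gives the clean exponent $|u|^{1-\varepsilon}$.

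The main obstacle is controlling the small-argument regime, i.e. ruling out the possibility that $\theta_P$ is close to $2\pi\Z$ for almost all primes $P$ of every usable degree, which would make all factors close to $1$. This is an equidistribution-type issue for $\{u d\log q/(q^d-1)\}$ as $P$ varies. The cleanest fix is a counting argument: among the $\pi_q(d)$ primes of degree $d$, the quantity $\theta_P = u d\log q \cdot N(P)/(N(P)^2-1)$ actually does not vary with $P$ (all degree-$d$ primes have the same norm $q^d$!), so there is no averaging over $P$ at fixed $d$ — the saving at degree $d$ is $(1 - \tfrac{N(P)}{N(P)+1}(1-\cos\theta_d))^{\pi_q(d)}$. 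Hence one genuinely must average over $d$: choose $d$ in $[d_0, 2d_0]$ and use that $\theta_d = u d\log q\, q^d/(q^{2d}-1)$ decreases geometrically in $d$, so it passes through $\asymp d_0$ full periods; a pigeonhole over these $d_0$ values of $d$ produces at least one (in fact $\gg d_0$) with $\cos\theta_d \le 1/2$, and that single degree already contributes $(1-\tfrac14)^{\pi_q(d)} \le \exp(-c\, q^{d_0}/d_0)$. Optimizing $d_0$ against the constraint that at least one good $d$ exists — which needs $\theta_{2d_0} \gtrsim 1$, i.e. $q^{d_0} \lesssim |u|^{1-\varepsilon'}$ after absorbing logarithmic factors — yields $|\Phi_{\rand}(u)| \le \exp(-C_{q,\varepsilon}|u|^{1-\varepsilon})$, completing the proof.
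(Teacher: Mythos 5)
Your proposal diverges from the paper's proof at a crucial point and, as written, has a genuine gap. Both approaches start from the factorization $\Phi_\rand(u) = \prod_P M_P(u)$ and observe that all primes of a given degree $d$ contribute the same angle $\theta_d \asymp |u|\, d\log q / q^d$, so that any decay must be extracted degree-by-degree. You then try to work in the regime where $\theta_d$ is of order $1$, and argue by pigeonhole over $d \in [d_0, 2d_0]$ that some $d$ has $\cos\theta_d \le 1/2$. This is where the argument breaks. The ratio $\theta_{d+1}/\theta_d \approx 1/q$, so the sequence $(\theta_d)_d$ decreases geometrically by a factor of $q$ per step. For fixed $q$ these jumps are coarse: a value $\theta_{d^*}$ can sit very close to a large multiple of $2\pi$ while $\theta_{d^*+1} \approx \theta_{d^*}/q$ is already $\ll 1$, so consecutive values can skip the region where the cosine is bounded away from $1$. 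Concretely, if $u$ is chosen so that $\theta_{d_1} = 2\pi$ exactly, then for $k \ge 1$ one has $\theta_{d_1-k} \equiv -2\pi q^k k/d_1 \pmod{2\pi}$ (since $q^k \in \Z$), which stays within $O(q^k k/d_1)$ of $2\pi\Z$; so for $k = O(1)$ all nearby degrees have $\cos\theta_d = 1 - O(1/d_1^2)$, none bounded away from $1$. There is no ``intermediate value'' principle for a geometrically spaced sequence modulo $2\pi$, and the claim that ``a pigeonhole over these $d_0$ values of $d$ produces at least one (in fact $\gg d_0$) with $\cos\theta_d \le 1/2$'' is not justified. Separately, even granting some good $d$, the constraint you impose (``$\theta_{2d_0} \gtrsim 1$'') forces $q^{2d_0} \lesssim |u| d_0$, hence $q^{d_0} \lesssim (|u|\log|u|)^{1/2}$, and the single-degree saving $\exp(-c\, q^{d_0}/d_0)$ is then only $\exp(-c\,|u|^{1/2+o(1)})$; the remark about ``relaxing $d_0$'' to reach $|u|^{1-\varepsilon}$ pushes $d_0$ up, which makes the range $[d_0, 2d_0]$ overshoot the transition region and worsens, not fixes, the pigeonhole problem.

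The paper avoids this entirely by working in the opposite, small-angle regime: it restricts to primes with $N(P) > \theta := |u|^{1+\varepsilon}$, so that $\theta_P = |u|\log N(P)/N(P) < 1$ for all of them. There, $1 - \cos\theta_P \asymp \theta_P^2$ uniformly, with no danger of $\theta_P$ landing near a nonzero multiple of $2\pi$, and the cumulative saving is
$$
|u|^2 \sum_{N(P) > \theta} \frac{\log^2 N(P)}{2N(P)(N(P)+1)} \asymp_q |u|^2\,\frac{\log\theta}{\theta} \asymp |u|^{1-\varepsilon}\log|u|,
$$
with the error from replacing $(N(P)\pm1)^{-1}$ by $N(P)^{-1}$ and from the quartic term in the cosine expansion controlled by the choice $\theta = |u|^{1+\varepsilon}$. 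In short: the paper uses many primes each giving a tiny, uniformly controllable saving, while your proposal bets on a single degree giving a bounded saving, and that bet cannot be guaranteed. If you want to salvage your route you would need an argument ruling out the ``near-$2\pi\Z$'' scenario for all $d$ simultaneously, which is considerably more delicate than the paper's computation.
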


\begin{proof}
	Since the sequence $( \sum_{P \in \calP_{\leq m}} \log(N(P))\pbx_P/(N(P) - \pbx_P))_{m \geq 1}$ converges almost surely, thus converges weakly \cite{billingsley_2012}*{Theorem 25.2}, to $-\frac{L'(1,\pbx)}{L(1,\pbx)}$. As $\exp(iu)$ is continuous and bounded, then \cite{billingsley_2012}*{Theorem 25.8} and the independence of $\{\log(N(P))\pbx_P/(N(P) - \pbx_P) : P \in \calP\}$ give
	$$
		\Phi_{\rand}(u) = \lim_{m \to \infty} \Esp\left( \exp\left(iu \sum_{P \in \calP_{\leq m}} \frac{\log(N(P))\pbx_P}{N(P) - \pbx_P} \right) \right) = \prod_P \Esp\left( \exp \left(iu \frac{\log(N(P))\pbx_P}{N(P) - \pbx_P} \right) \right).
		$$

	Each factor $M_P(u)$ of the above product equals
	\begin{equation}\label{Euler_Factor_MP}
	M_P(u) = \frac{1}{N(P) + 1} + \frac{N(P)}{2(N(P)+1)}\left( \exp\left(-iu\frac{\log(N(P))}{N(P)+1}\right) + \exp\left( iu \frac{\log(N(P))}{N(P)-1} \right)\right).
	\end{equation}
	Using $(N(P) \pm 1)^{-1} - N(P)^{-1} \ll N(P)^{-2}$ and $| \exp(ib) - \exp(ia) | \leq | b - a |$ in \eqref{Euler_Factor_MP} yields
	\begin{equation}\label{MP_Using_Cosine}
	M_P(u) = \frac{1}{N(P)+1} + \frac{N(P)}{N(P)+1}\cos\left( u \frac{\log(N(P))}{N(P)} \right) + O\left( \frac{|u|\log(N(P))}{N(P)^2}\right).
	\end{equation}
From \eqref{Euler_Factor_MP}, $| M_P(u) | \leq 1$ for all $P \in \calP$.
Let $\theta > 0$ be such that if $N(P) > \theta$ then $|u|\log(N(P))/N(P) < 1$. By taking the Maclaurin expansion of the cosine function and using $\log(1-x) \leq -x$ for $x < 1$, we get from \eqref{MP_Using_Cosine}:
	\begin{align}\label{Upper_Bound_Euler_Factor}
		\left| \Phi_\rand(u) \right| \leq
	\prod_{P \in \calP_{> \theta}} \left|M_P(u) \right| \leq \exp\biggl( &-|u|^2 \sum_{P \in \calP_{> \theta}} \frac{\log^2(N(P))}{2N(P)(N(P)+1)}\\
	&+ O\left(|u|^4 \sum_{P \in \calP_{> \theta}}\nonumber \frac{\log^4(N(P))}{N(P)^4}\ \right)
	+ O\left.\left( |u| \sum_{P \in \calP_{> \theta}}\frac{\log(N(P))}{N(P)^2}\right) \right).\nonumber
	\end{align}
	
	Next, we estimate the three summands in \eqref{Upper_Bound_Euler_Factor}. Let $N_\theta$ be the smallest integer $N > 0$ satisfying $q^N > \theta$. Using \eqref{Prime_Number_Theorem}, we have
	\begin{equation}\label{Equation_First_Summand}
		\sum_{P \in \calP_{> \theta}} \frac{\log^2(N(P))}{2N(P)(N(P)+1)} = \frac{\log^2(q)}{2}\left( \sum_{m=N_\theta}^\infty \left( \frac{m}{q^m+1} + O\left( \frac{m}{(q^m)^{1/2}(q^m  + 1)}\right) \right) \right).
	\end{equation}
	
	Observe that there is a constant $\alpha_q > 0$ such that
	\begin{equation}\label{Constant_First_Summand}
	\alpha_q \sum_{m = N_\theta}^\infty \frac{m}{q^m} \leq \sum_{m = N_\theta}^\infty \frac{m}{q^m + 1} \leq \sum_{m = N_\theta}^\infty \frac{m}{q^m}.
	\end{equation}
	Combining \eqref{Equation_First_Summand} and \eqref{Constant_First_Summand} with the fact that $q^{N_\theta} > \theta$ implies that
\begin{equation}\label{First_Estimate_Equation}
\sum_{P \in \calP_{> \theta}} \frac{\log^2(N(P))}{2N(P)(N(P)+1)} \asymp_q   \frac{\log(\theta)}{\theta}.
\end{equation}
Using analogous arguments, we find
\begin{equation}\label{Second_Estimate_Equation}
\sum_{P \in \calP_{> \theta}}  \frac{\log^4(N(P))}{N(P)^4}  = \log^4(q) \sum_{m = N_\theta}^\infty \frac{m^4}{q^{4m}} \pi_q(m) = O_q\left(  \frac{\log^3(\theta)}{\theta^3} \right)
\end{equation}
and
\begin{equation}\label{Third_Estimate_Equation}
\sum_{P \in \calP_{> \theta}} \frac{\log(N(P))}{N(P)^2} = \log(q) \sum_{m = N_\theta}^\infty \frac{m}{q^{2m}} \pi_q(m) = O_q\left( \frac{1}{\theta}\right).
\end{equation}

For $0 < \varepsilon < 1$ and $\theta = |u|^{1+\varepsilon}$, there are constants $A_q, B_{q,\varepsilon}, C_q > 0$ such that
		\begin{align*}
			\left| \Esp\left( \exp\left(-iu\frac{L'(1,\pbx)}{L(1,\pbx)}\right)\right)\right| \leq \exp\left( -A_q \frac{|u|^2}{\theta} + B_{q,\varepsilon}  \frac{|u|^4}{\theta^{3-\varepsilon}} + C_q \frac{|u|}{\theta}\right) \leq \exp\left( - C_{q,\varepsilon} |u|^{1-\varepsilon} \right)
		\end{align*}
		for a suitable constant $C_{q, \varepsilon}>0$ for large $|u|$.
	\end{proof}
	
\section{Discrepancy and Proof of Theorem \ref{Theorem_Application_Small_Absolute_Value}}
For each integer $n \geq 1$, define an arithmetic function $f_n : \calH_n \to \R$ by setting $f_n(D) := -L'(1,\chi_D)/L(1,\chi_D)$ for each $D \in \calH_n$. To $f_n$, we associate a function $F_n : \R \to [0,1]$ by setting for each $x \in \R$,
	$$
	F_n(x) := \frac{\#\{ D \in \calH_n : f_n(D) \leq x \}}{\# \calH_n} ,
	$$
	which is a distribution function. Its characteristic function $\varphi_n$ is defined for all $u \in \R$ by
	$$
	\varphi_n(u) := \int_{-\infty}^{\infty} \exp(iux) dF_n(x).
	$$
	For each $D \in \calH_n$ and $x \in \R$, we set $\psi_{f_n(D)}(x)$ to be $1$ if $f_n(D) \leq x$ and $0$ otherwise. We have
	$$
	\varphi_n(u) = \frac{1}{\# \calH_n} \sum_{D \in \calH_n} \int_{-\infty}^\infty \exp(iux)d\left( \psi_{f_n(D)}(x)\right) = \frac{1}{\# \calH_n} \sum_{D \in \calH_n} \exp\left( iu \left( - \frac{L'(1,\chi_D)}{L(1,\chi_D)}\right)\right).
	$$
	
Recall that the random variable $-\frac{L'(1,\pbx)}{L(1,\pbx)}$ has distribution function $F_\rand$ and characteristic function $\Phi_\rand$. We now obtain an upper bound on the discrepancy between the distribution of $-L'(1,\chi_D)/L(1,\chi_D)$ and of its random model.

\begin{proposition}\label{Proposition_Discrepancy}
We have
$$
\sup_{x \in \R} \left| F_n(x) - F_\rand(x) \right| \ll_q \frac{\log^2(\log(q^n))}{\log(q^n)}.
$$
\end{proposition}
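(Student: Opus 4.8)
The strategy is the standard one for converting a Laplace-transform comparison into a Kolmogorov-distance (discrepancy) bound: apply the Berry--Esseen inequality. Recall that if $F$ and $G$ are distribution functions, $G$ has a bounded derivative, and their characteristic functions are $\varphi$ and $\psi$, then for any $U > 0$,
$$
\sup_{x \in \R} |F(x) - G(x)| \ll \int_{-U}^{U} \left| \frac{\varphi(u) - \psi(u)}{u} \right| \, du + \frac{\sup_x G'(x)}{U}.
$$
I would apply this with $F = F_n$, $G = F_\rand$, $\varphi = \varphi_n$, $\psi = \Phi_\rand$; the hypothesis that $\sup_x F_\rand'(x) = \sup_x M_\rand(x)$ is finite was established just before the statement (via Proposition \ref{Proposition_Exponential_Decay}). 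The plan is to choose $U$ of size a small constant multiple of $\log(q^n)/\log^2(\log(q^n))$ — precisely the range in which Proposition \ref{Proposition_Probabilistic_Estimation_Exponential_Series} is valid — so that the tail term $\sup_x M_\rand(x)/U \ll_q \log^2(\log(q^n))/\log(q^n)$ matches the target bound.

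For the integral term, I would split at some threshold and use two different inputs. On the bulk $|u| \leq U$ we have, by Proposition \ref{Proposition_Probabilistic_Estimation_Exponential_Series} applied with $s = iu$ (note $|iu| = |u| \leq M_{q,\delta}\log(q^n)/\log^2(\log(q^n))$ after choosing $U \leq M_{q,\delta}\log(q^n)/\log^2(\log(q^n))$ with, say, $\delta = 1/4$),
$$
|\varphi_n(u) - \Phi_\rand(u)| \ll_{q} (q^n)^{-\delta/(4\log(\log(q^n)))} = \exp\!\left( -\frac{\delta \log(q^n)}{4\log(\log(q^n))} \right).
$$
Hence $\int_{-U}^{U} |(\varphi_n(u) - \Phi_\rand(u))/u|\, du$: near $u = 0$ one must be slightly careful since $1/u$ is not integrable, but $\varphi_n(u) - \Phi_\rand(u)$ vanishes to first order at $0$ (both characteristic functions have the same first moment up to a negligible error, or more simply one bounds $|\varphi_n(u) - \Phi_\rand(u)|/|u|$ using that the difference of the Laplace transforms is differentiable with controlled derivative; alternatively, restrict to $|u| \geq (q^n)^{-1}$ and absorb the tiny interval $|u| < (q^n)^{-1}$ trivially since the integrand there is $O(q^n \cdot \text{(error)})$ which is still negligible). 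Away from the origin, $\int_{1/(q^n) \leq |u| \leq U} |u|^{-1}\, du \ll \log(q^n \cdot U) \ll \log(q^n)$, so this whole contribution is
$$
\ll_q \log(q^n) \cdot \exp\!\left( -\frac{\delta\log(q^n)}{4\log(\log(q^n))} \right),
$$
which decays faster than any power of $\log(q^n)$ and is therefore $o\!\left(\log^2(\log(q^n))/\log(q^n)\right)$, hence absorbed. Combining the bulk estimate and the Berry--Esseen tail term gives the claimed $\ll_q \log^2(\log(q^n))/\log(q^n)$.

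The main subtlety — and the only place requiring genuine care — is the behaviour of the integrand near $u = 0$, where the factor $1/u$ would otherwise force a spurious logarithmic loss. The clean fix is to note that $\Phi_\rand$ and $\varphi_n$ are both characteristic functions of distributions with finite (and, for $\varphi_n$, uniformly bounded by Corollary \ref{Corollary_Estimate_Average_Bounded_by_One}) variance, so $|\varphi_n(u) - \Phi_\rand(u)| \ll |u| \cdot(\text{first-moment difference}) + |u|^2 \cdot O_q(1)$ for small $|u|$; combined with the uniform exponential-in-$\log(q^n)/\log\log(q^n)$ error from Proposition \ref{Proposition_Probabilistic_Estimation_Exponential_Series} at, say, $|u| = 1$, and the trivial bound $|\varphi_n(u) - \Phi_\rand(u)| \leq 2$, one checks that $\int_{-U}^U |(\varphi_n - \Phi_\rand)(u)/u|\, du$ is bounded by $\log(q^n)$ times the uniform error plus an $O_q(1)$-controlled constant times that error — in every case majorized by the target. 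A secondary bookkeeping point is the choice of $\delta$: any fixed $\delta \in (0,1/2)$ works, and the resulting exponential savings dominate the polynomial $\log(q^n)$ factors, so the dominant term is genuinely the Berry--Esseen tail $\sup_x M_\rand(x)/U$, pinning down the exponent and constant in the final bound.
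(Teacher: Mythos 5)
Your proposal is correct and follows essentially the same route as the paper: Berry--Esseen with cutoff $U \asymp \log(q^n)/\log^2(\log(q^n))$, the Laplace-transform comparison (Proposition~\ref{Proposition_Probabilistic_Estimation_Exponential_Series}) on the bulk $|u|$ away from the origin, and a Lipschitz-type bound $|\varphi_n(u) - \Phi_\rand(u)| \ll_q |u|$ near the origin obtained from bounded second moments via Corollary~\ref{Corollary_Estimate_Average_Bounded_by_One}. The paper splits the integral at $R_0 = 1/\log(q^n)$ and gets the inner contribution $\ll 1/\log(q^n)$, which is exactly what your ``clean fix'' produces. One caution about your first alternative: bounding the contribution of $|u| < (q^n)^{-1}$ by ``$O(q^n \cdot (\text{error}))$'' does not work as stated, since the Proposition~\ref{Proposition_Probabilistic_Estimation_Exponential_Series} error term $(q^n)^{-\delta/(4\log\log(q^n))}$ decays far more slowly than $q^{-n}$, so $q^n$ times it actually grows; moreover $\int |\varphi_n - \Phi_\rand|/|u|\,du$ over $|u| < (q^n)^{-1}$ is not controlled by a uniform-in-$u$ bound on $|\varphi_n - \Phi_\rand|$ alone, since $1/|u|$ is non-integrable at $0$. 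The only viable way to handle the origin is exactly the linear-in-$|u|$ bound you give second, which is the paper's argument; I would cut the digression about the alternative and proceed directly with the Taylor-expansion bound (and note that, as in the paper, the first-moment cancellation is unnecessary: the coarse bound $|e^{ixu}-1| \ll |u||x|$ with Cauchy--Schwarz and the $r=2$ case of Corollary~\ref{Corollary_Estimate_Average_Bounded_by_One} already gives $|\varphi_n(u) - \Phi_\rand(u)| \ll_q |u|$).
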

	
\begin{proof} For any real $R > 0$, Berry-Esseen inequality \cite{loeve_1977}*{p.297, A. Basic Inequality} gives
\begin{equation}\label{Berry_Esseen}
\sup_{x \in \R} \left| F_n(x) - F_\rand(x) \right| \leq \frac{1}{\pi} \int_{-R}^R \left| \frac{\varphi_n(u) - \Phi_\rand(u)}{u}\right|du + \frac{24}{\pi}\frac{\sup_{x \in \R} F_\rand'(x)}{R}.
\end{equation}
Fix $0 < \delta_0 < 1/2$ and let $M_{q,\delta_0} > 0$ be the constant from Proposition \ref{Proposition_Probabilistic_Estimation_Exponential_Series}. For $R = M_{q,\delta_0}\log(q^n)/ \log^2(\log(q^n))$, $R_0 = 1/\log(q^n)$ and $n$ large enough, we have $\log(R/R_0) > 0$. By Proposition \ref{Proposition_Probabilistic_Estimation_Exponential_Series}, we have
		\begin{align*}
			\int_{[-R,R] \smallsetminus [-R_0,R_0]} \left| \frac{\varphi_n(u) - \Phi_\rand(u)}{u}\right|du &\ll  \log\left( \frac{R}{R_0} \right) \sup\limits_{u \in [-R,R]} \left| \varphi_n(u) - \Phi_\rand(u) \right|\\
			&\ll_q  (q^n)^{-\frac{\delta}{4\log(\log(q^n))}}\log(\log(q^n)).\numberthis\label{Outer_Rectangle}
		\end{align*}		

Next, since $|e^{i\theta} - 1| \ll | \theta |$ for all $\theta \in \R$, we have
		\begin{align*}
			\varphi_n(u) - \Phi_\rand(u) &= \int_{-\infty}^{+ \infty} ( e^{ixu} -1)d F_n(x) - \int_{-\infty}^{+ \infty} ( e^{ixu} -1)d F_\rand(x)\\
			&\ll |u|\left( \left( \int_{-\infty}^{+\infty} x^2 d F_n(x) \right)^{1/2} + \left( \int_{-\infty}^{+\infty} x^2 d F_\rand(x) \right)^{1/2}\right)\\
			&\ll |u|,
		\end{align*}
		where the last estimate follows from the bound of Corollary \ref{Corollary_Estimate_Average_Bounded_by_One} with $r = 2$. Therefore,
		\begin{equation}\label{Estimate_Inner_Square}
		\int_{-R_0}^{R_0} \left| \frac{\varphi_n(u)-\Phi_\rand(u)}{u} \right| du \ll \frac{1}{\log(q^n)}.
		\end{equation}
		
		Since $\sup_{x \in \R} F_\rand'(x)$ is bounded, then we obtain from \eqref{Berry_Esseen}, \eqref{Outer_Rectangle} and \eqref{Estimate_Inner_Square} that
		$$
		\sup_{x \in \R} \left| F_n(x) - F_\rand(x) \right|\ll_q \frac{\log^2(\log(q^n))}{\log(q^n)}. \qedhere
		$$
	\end{proof}

For the rest of the section, we fix an ordering $(P_j)_{j \geq 1}$ on $\calP$ so that $d_{P_j} \leq d_{P_{j+1}}$ for all $j \geq 1$. Let $\calP_E$ and $\calP_O$ be respectively the set of monic irreducible polynomials in $\Fq[t]$ of even and odd degrees. We consider them ordered with the ordering coming from $\calP$. We next prove a technical lemma towards showing that $M_\rand$ is positive.

 \begin{lemma}\label{series-density}
			For $P\in \calP_O$, define a function $h_P$ by $h_P(x)=(\log{N(P)}x)/(N(P)-x)$ with $x\in \mathbb{R}\smallsetminus\{q^{d_P}\}$. For fixed reals $\varepsilon>0$ and $\alpha$, there is a $\{-1, 0, 1\}$-valued sequence  $(x_P)_{P \in \calP_O}$ such that \begin{equation}\label{Density_Equation}
				\left| \sum_{P \in \calP_O} h_P(x_P)-\alpha\right|<\varepsilon.
			\end{equation}
		\end{lemma}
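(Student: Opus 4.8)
The plan is to realize the target value $\alpha$ by a greedy argument on the terms $h_P(x_P)$, exploiting that for each $P \in \calP_O$ the three available values $h_P(-1)$, $h_P(0)=0$, and $h_P(1)$ straddle zero: namely $h_P(-1) = -\log N(P)/(N(P)+1) < 0 < \log N(P)/(N(P)-1) = h_P(1)$, and both nonzero values have absolute value $\asymp \log N(P)/N(P)$, which tends to $0$ as $d_P \to \infty$ but is summable-divergent in the sense that $\sum_{P \in \calP_O} \log N(P)/N(P) = \sum_{m \text{ odd}} \log(q) \cdot m \cdot \pi_q(m)/q^m \cdot (1+o(1))$ diverges (by the Prime Number Theorem \eqref{Prime_Number_Theorem}, $\pi_q(m) \sim q^m/m$, so each odd-degree block contributes $\asymp \log q$ and there are infinitely many). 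This divergence-of-the-positive-part (and of the negative part) together with terms shrinking to $0$ is exactly the classical setup guaranteeing that the set of subsums is all of $\R$.

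Concretely, I would enumerate $\calP_O = \{P_1, P_2, \dots\}$ in order of nondecreasing degree and define $x_{P_j}$ recursively: having chosen $x_{P_1},\dots,x_{P_{j-1}}$ with partial sum $S_{j-1} := \sum_{i<j} h_{P_i}(x_{P_i})$, set $x_{P_j} = 1$ if $S_{j-1} < \alpha$, set $x_{P_j} = -1$ if $S_{j-1} > \alpha$, and $x_{P_j}=0$ if $S_{j-1} = \alpha$ (in which case set all later values to $0$ and stop). First I would check that this process keeps $|S_j - \alpha|$ under control: whenever we overshoot, the overshoot is at most the size of the last step, $|h_{P_j}(x_{P_j})| \le \log N(P_j)/(N(P_j)-1) =: \eta_j$, which $\to 0$. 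So if the sign of $S_j - \alpha$ keeps flipping infinitely often, then $|S_j-\alpha| \le \eta_j \to 0$ and we are done (for any prescribed $\varepsilon$, truncate at the first $j$ with $\eta_j < \varepsilon$ and $S_j$ on the correct side — more carefully, once $\eta_j<\varepsilon$ we have $|S_j - \alpha|<\varepsilon$ at every subsequent sign change, and sign changes occur infinitely often). The only alternative is that from some point $j_0$ on the sign of $S_j - \alpha$ is constant, say $S_j < \alpha$ for all $j \ge j_0$; then $x_{P_j} = 1$ for all $j \ge j_0$, so $S_j = S_{j_0} + \sum_{j_0 \le i \le j} \eta_i' $ where $\eta_i' = h_{P_i}(1) \ge \log N(P_i)/(N(P_i)+1) \asymp \log N(P_i)/N(P_i)$; but $\sum_i \log N(P_i)/N(P_i)$ diverges by the Prime Number Theorem estimate above, so $S_j \to +\infty$, contradicting $S_j < \alpha$. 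The symmetric argument rules out $S_j > \alpha$ eventually. Hence the sign flips infinitely often and \eqref{Density_Equation} holds. Finally, to get \eqref{Density_Equation} as a statement about the \emph{full} sum $\sum_{P \in \calP_O} h_P(x_P)$ rather than a partial sum, I would note that once we stop flipping we can (by the same divergence/convergence bookkeeping) instead argue that after truncating at a large $j$ the tail $\sum_{i > j} h_{P_i}(x_{P_i})$ can be made to converge to whatever small residual $\alpha - S_j$ remains, by re-running the greedy argument on the tail with target $\alpha - S_j$; since the tail series $\sum_{i>j}\eta_i'$ still diverges, the greedy process on the tail converges exactly, giving $\sum_{P\in\calP_O} h_P(x_P) = \alpha$ outright, which is stronger than \eqref{Density_Equation}.

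The main obstacle, and the point requiring the most care, is the quantitative divergence $\sum_{P \in \calP_O} \log N(P)/N(P) = \infty$: one must use \eqref{Prime_Number_Theorem} (or \eqref{Lower_Bound_Prime_Number_Theorem}) to see that $\sum_{P \in \calP_n} \log N(P)/N(P) = \log(q^n) \pi_q(n)/q^n = \log(q) + O(n/q^{n/2}) \to \log q$ as $n \to \infty$ through odd $n$, so the sum over all odd-degree primes diverges like $\sum_{n \text{ odd}} \log q$. This divergence is what forbids the "eventually one-signed" scenario and is the crux of the whole lemma; everything else is the standard greedy/telescoping bookkeeping. A secondary technical point is ensuring the individual step sizes $\eta_j = \log N(P_j)/(N(P_j)-1) \to 0$ monotonically enough (they do, since $d_{P_j}$ is nondecreasing and $\log(y)/(y-1)$ is eventually decreasing in $y$), so that truncation at the first index with $\eta_j < \varepsilon$ genuinely controls all later overshoots.
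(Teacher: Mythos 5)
Your proof is correct but takes a genuinely different route from the paper. You run a two-sided greedy (Riemann-rearrangement-style) process: enumerate $\calP_O$ by nondecreasing degree and choose $x_{P_j}\in\{-1,1\}$ at each step according to which side of $\alpha$ the partial sum $S_{j-1}$ lies, then argue that $S_j-\alpha$ must flip sign infinitely often (otherwise the divergence of $\sum_{P\in\calP_O}\log N(P)/N(P)$, from the function-field Prime Number Theorem, would force $S_j\to\pm\infty$), so that once the step sizes drop below $\varepsilon$ every subsequent turnaround lands within $\varepsilon$ of $\alpha$; as you note, this in fact gives $\sum_P h_P(x_P)=\alpha$ exactly. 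The paper's argument is one-sided and shorter: it dispatches $|\alpha|<\varepsilon$ with the zero sequence, fixes a single sign $\omega$ matching that of $\alpha$, discards a finite initial segment of $\calP_O$ so that all remaining terms $\omega h_{P_i}(\omega)$ are smaller than $\min\{\varepsilon/2,\,|\alpha|-\varepsilon/2\}$, and then takes a single contiguous window $m\le i\le n$ on which $x_{P_i}=\omega$ (and $x_{P_i}=0$ off the window), with $n$ the first index where the accumulated sum crosses $|\alpha|-\varepsilon/2$; the overshoot is then at most the size of the last step. Both proofs hinge on the same two facts --- the individual terms tend to zero and the series of their absolute values diverges --- and your approach purchases the slightly stronger conclusion of exact equality at the cost of the sign-flip bookkeeping, whereas the paper's one-sign window is leaner and entirely avoids mixing $+1$ and $-1$ among the chosen $x_P$'s.
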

		
		\begin{proof} If $|\alpha| < \varepsilon$, the zero sequence satisfies \eqref{Density_Equation}. Now suppose $|\alpha| \geq \varepsilon$ and write $\alpha = \omega \beta$ with $\omega \in \{-1,1\}$ and $\beta > 0$. Let $(P_i)_{i \geq 1}$ be the elements of $\calP_O$ with the induced order from $\calP$. Since $\omega h_{P_i}(\omega)>0$ for all $i \geq 1$ and the sequence $(\omega h_{P_i}(\omega))_{i \geq 1}$ converges to $0$, there is an integer $m \geq 1$ such that for $i\geq m$, we have $0<\omega h_{P_i}(\omega)<\min\{\varepsilon/2, \beta-\varepsilon/2\} $. Now observe that $\sum_{i=1}^{\infty} \omega h_{P_i}(\omega)$ diverges to $\infty$. So, there is an integer $n>m$ such that  $\sum_{i=m}^{n-1} \omega h_{P_i}(\omega) \leq \beta-\varepsilon/2$ and  $\sum_{i=m}^n \omega h_{P_i}(\omega) > \beta-\varepsilon/2$. Since $n>m$, we have $\omega h_{P_n}(\omega)<\varepsilon/2$. Hence,
		$$
	  \left|\sum_{i=m}^n h_{P_i}(\omega)- \alpha\right| = \left| \sum_{i=m}^n \omega h_{P_i}(\omega) - \beta \right| <\varepsilon.
	$$
To satisfy \eqref{Density_Equation}, we can then choose $x_{P_i} = \omega $ for $m\leq i\leq n$ and $x_{P_i}=0$ otherwise.
\end{proof}

\begin{proposition}\label{Proposition_Positive_Density}
		The density function $M_\rand$ in \eqref{density} is positive.
\end{proposition}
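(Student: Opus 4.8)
The plan is to split off the even-degree primes. I would write $-\frac{L'(1,\pbx)}{L(1,\pbx)} = X_E + X_O$, where $X_E := \sum_{P \in \calP_E} \frac{\log(N(P))\,\pbx_P}{N(P) - \pbx_P}$ and $X_O := \sum_{P \in \calP_O} \frac{\log(N(P))\,\pbx_P}{N(P) - \pbx_P}$. Both series converge almost surely by Kolmogorov's theorem, exactly as in Lemma \ref{Equation_Equality_Almost_Surely}, and $X_E$ and $X_O$ are independent since they are built from disjoint subfamilies of the independent variables $(\pbx_P)_{P \in \calP}$. I will show that $X_E$ has a continuous probability density while $X_O$ has full support, and then deduce the positivity of $M_\rand$ by a convolution argument.

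First I would check that $X_E$ admits a smooth density $M_E$ (necessarily with $M_E \geq 0$ and $\int_\R M_E = 1$, being a probability density). Its characteristic function is $\prod_{P \in \calP_E} M_P(u)$ with $M_P$ as in \eqref{Euler_Factor_MP}; since $\calP_E$ contains $\sim q^{2m}/(2m)$ primes of each even degree $2m$, the computations \eqref{Equation_First_Summand}--\eqref{First_Estimate_Equation} still yield $\sum_{P \in \calP_E,\ N(P) > \theta} \frac{\log^2(N(P))}{N(P)(N(P)+1)} \asymp_q \frac{\log\theta}{\theta}$, while the two error sums are of smaller order. Rerunning the proof of Proposition \ref{Proposition_Exponential_Decay} with $\calP$ replaced by $\calP_E$ therefore gives $|\Esp(\exp(iuX_E))| \leq \exp(-C_{q,\varepsilon}|u|^{1-\varepsilon})$ for large $|u|$, so the characteristic function of $X_E$ is integrable, and Fourier inversion produces the desired smooth bounded density $M_E$.

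Next I would invoke Lemma \ref{series-density} to prove that $\mathrm{supp}(X_O) = \R$. Given $\alpha \in \R$ and $\varepsilon > 0$, the lemma produces a $\{-1,0,1\}$-valued sequence $(x_P)_{P \in \calP_O}$, vanishing outside finitely many primes of degree at most some $m$, with $|\sum_{P \in \calP_O} h_P(x_P) - \alpha| < \varepsilon/2$. After enlarging $m$ (padding the sequence with zeros), the tail $T_m := \sum_{P \in \calP_O,\ d_P > m} h_P(\pbx_P)$ satisfies $\prob(|T_m| < \varepsilon/2) > 0$ because $T_m \to 0$ almost surely; as $T_m$ is independent of the event $A := \{\pbx_P = x_P \text{ for all } P \in \calP_O \text{ with } d_P \leq m\}$, and $\prob(A) > 0$, we obtain $\prob(|X_O - \alpha| < \varepsilon) \geq \prob(A)\,\prob(|T_m| < \varepsilon/2) > 0$. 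Hence every real number lies in $\mathrm{supp}(X_O)$.

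Finally, let $F_O$ denote the distribution function of $X_O$. By independence and the fact that $X_E$ has density $M_E$, the sum $X_E + X_O$ has density $y \mapsto \int_\R M_E(y - t)\,dF_O(t)$, which is continuous by dominated convergence since $M_E$ is bounded and continuous; as $M_\rand$ is the continuous density of the same law, $M_\rand(y) = \int_\R M_E(y - t)\,dF_O(t)$ for every $y \in \R$. If $M_\rand(y_0) = 0$ for some $y_0$, then $t \mapsto M_E(y_0 - t) \geq 0$ vanishes $F_O$-almost everywhere; being continuous with $\mathrm{supp}(F_O) = \R$, it vanishes identically, forcing $M_E \equiv 0$, contradicting $\int_\R M_E = 1$. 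Therefore $M_\rand(y) > 0$ for all $y \in \R$. The crux is the splitting itself: one never needs $M_E$ to be positive, only that $X_E$ has a continuous density and that $X_O$ has full support via Lemma \ref{series-density}; everything else is a routine adaptation of results already established, and the mild bookkeeping in the support argument (choosing $m$ large enough to control the tail) is the only delicate point.
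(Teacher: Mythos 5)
Your proof is correct, and while the core idea is the same as the paper's — split the random series by parity of prime degree and use Lemma \ref{series-density} to show the odd-degree part has full support — you make two genuine streamlinings worth noting. First, the paper reruns the exponential-decay argument of Proposition \ref{Proposition_Exponential_Decay} for \emph{both} $\calP_E$ and $\calP_O$ to produce smooth densities $M_E$ and $M_O$, and then uses the continuity of $M_O$ in the final step; your observation that only the even part needs a continuous density (the odd part needs only full support) halves that work and cleans up the endgame, which you close with a short contradiction argument in place of the paper's somewhat delicate "find a common subinterval $]a_1,b_1[$" construction. Second, the paper cites \cite{kowalski_2021}*{Proposition B.10.8} to pass from density of the set of achievable tail-sums to full support of $X_O$, whereas you give a self-contained conditioning argument, writing $X_O$ as a head on primes of degree $\le m$ plus an independent tail $T_m$, using $\prob(A) > 0$, $T_m \to 0$ a.s., and independence. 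You also bypass the paper's use of Tenenbaum's weak-convergence criterion for infinite convolutions: since $X_E$ and $X_O$ are independent and $X_E$ has a bounded continuous density, the convolution formula $M_\rand(y) = \int_\R M_E(y-t)\,dF_O(t)$ (and its continuity via dominated convergence, hence its identification with the unique continuous density of $F_\rand$) comes for free. All steps check out: in particular, the restriction of \eqref{Equation_First_Summand}--\eqref{First_Estimate_Equation} to $\calP_E$ still gives $\asymp_q \log\theta/\theta$ since even-degree primes occur with the same asymptotic density.
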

	
	\begin{proof}
		For $\nu \in \{0,1\}$ and integer $j \geq 1$, let
		$$
		a_{P_j, \nu}= \frac{(-1)^\nu \log{N(P_j)}}{N(P_j)-(-1)^\nu}.$$
		Let
		$$
		\delta(x) = \begin{cases}
			0 &\text{if } x < 0,\\
			1 &\text{if } x \geq 0.
		\end{cases}
		$$
		Now, for all $j \geq 1$, set $\pbw_{P_j} := \pbw_{P_j,1}$ as in Lemma \ref{Equation_Equality_Almost_Surely}, and
		$$
		F_{\pbw_{P_j}}(x):=\frac{1}{N(P_j) + 1} \delta(x)+ \frac{N(P_j)}{2(N(P_j)+1)}\left( \delta(x-a_{P_j,0}) + \delta(x-a_{P_j,1})\right).
		$$
		
		 We can write \eqref{Characteristic_Function_rand} as
		$$\Phi_\rand(u) = \lim_{n \to \infty} \prod_{j\leq n} \Esp\left( \exp(iu \pbw_{P_j}) \right).$$
		Since $\Phi_\rand(0)=1$ and  $\Phi_\rand$ is continuous at $0$, there is a real number $\eta > 0$ such that for each $u \in [-\eta,\eta]$ we have $\Phi_\rand(u) \neq 0$ and so
		$$
		\lim_{m,n \to \infty} \prod_{m < j \leq n} \Esp\left( \exp(iu \pbw_{P_j}) \right) = 1.
		$$
		Hence, by \cite{tenenbaum_1995}*{Theorem 2.7}, the sequence $( F_{\pbw_{P_1}}*\cdots*F_{\pbw_{P_n}})_{n \geq 1}$  of convolution products converges weakly to $F_\rand$. Similarly, for  $(P_j)_{j \geq 1}$ in $\calP_E$, respectively in $\calP_O$, the sequence $( F_{\pbw_{P_1}}*\cdots*F_{\pbw_{P_n}})_{n \geq 1}$ of convolution products converges weakly to a distribution function $F_E$, respectively to a distribution function $F_O$. Then we get  $F_\rand = F_E * F_O$. As in the proof of Proposition \ref{Proposition_Exponential_Decay}, $F_E$ and $F_O$ have smooth density functions $M_E$ and $M_O$. Moreover,
\begin{equation}\label{convol}
	M_\rand(x) = \int_{-\infty}^\infty M_O(x-u) M_E(u) du.
\end{equation}

Now, by the Kolmogorov Theorem, the random series
\begin{equation}\label{Odd_Log_Derivative_Proba}
	\sum_{P \in \calP_O} \frac{\log(N(P))\pbx_P}{N(P)-\pbx_P}
\end{equation}
converges almost surely. By Lemma \ref{series-density}, the set 
$$
\left\{ \sum_{P \in \calP_O} \frac{\log(N(P))x_P}{N(P)-x_P} < \infty : x_P \in \{-1,0,1\} \right\}
$$
is dense in $\R$. Hence, from \cite{kowalski_2021}*{Proposition B.10.8}, the support of \eqref{Odd_Log_Derivative_Proba} is $\R$. Therefore, for $a, b \in \R$ with $a < b$, we have $F_O(b) - F_O(a) = \int_a^b M_O(x) dx > 0$. In particular, $M_O$ is not identically zero on any interval $]a,b[$. Since $\int_{-\infty}^\infty M_E(u) du = 1$ and $M_E$ is continuous, then $M_E > 0$ on one of these intervals $]a,b[$. From \eqref{convol} and the continuity of $M_O$, there is a non-empty open sub-interval $]a_1,b_1[ \subset ]a,b[$ on which $M_O(x-u) > 0$. Therefore,
$$
M_\rand(x) \geq \int_{a_1}^{b_1} M_E(x-u) M_O(u) du > 0.
$$
Since $x \in \R$ was arbitrary, then the function $M_\rand$ is positive on $\R$.
\end{proof}

We conclude this section by proving Theorem \ref{Theorem_Application_Small_Absolute_Value}.
\begin{proof}
Let $\{ \varepsilon_n \}_{n \geq 1}$ be a sequence of real numbers converging to $0$. By Proposition \ref{Proposition_Discrepancy},
		$$
		\frac{\# \left\{ D \in \calH_n : \left| \frac{L'(1,\chi_D}{L(1,\chi_D)} \right| \leq \varepsilon_n \right\} }{\# \calH_n} = F_n(\varepsilon_n) - F_n(-\varepsilon_n) = F_\rand(\varepsilon_n) - F_\rand(-\varepsilon_n) + O_q\left( \frac{\log^2(\log(q^n))}{\log(q^n)}\right).
		$$
		
		Since $F_\rand$ has a continuous density function $M_\rand$, which is positive by Proposition \ref{Proposition_Positive_Density},
		$$
		F_\rand(\varepsilon_n) - F_\rand(-\varepsilon_n) = \int_{-\varepsilon_n}^{\varepsilon_n} M_\rand(y) dy \gg_q \varepsilon_n.
		$$
		Thus, by choosing $\varepsilon_n = C_q\log^2(\log(q^n)))/\log(q^n)$ for a large enough  constant $C_q > 0$, we obtain
		$$
		\frac{\# \left\{ D \in \calH_n : \left| \frac{L'(1,\chi_D}{L(1,\chi_D)} \right| \leq \varepsilon_n \right\} }{\# \calH_n} \gg_q \frac{\log^2(\log(q^n))}{\log(q^n)},
		$$
		which concludes the proof.
	\end{proof}
	
\section{Proof of Theorem \ref{Theorem_Omega_Result}}

The next two lemmas will be used to prove the main result of this section.
\begin{lemma}\label{Lemma_Sum_allr}
	For any integer $m \geq 1$, we have
	$$
	\sum_{\substack{P \in \calP_{\leq m} \\ r \geq 1~\mathrm{odd}}} \frac{\log(q^{d_P})}{(q^{d_P})^r} = m\log(q) + O_q(1).
	$$
\end{lemma}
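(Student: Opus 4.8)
The plan is to interchange the order of summation — legitimate since it is a finite sum of convergent series of positive terms — and group the monic irreducibles by their degree. Writing $d:=d_P$ so that $N(P)=q^d$ and $\log(q^{d_P})=d\log q$, for a fixed degree $d$ the inner sum over odd $r\geq 1$ is a geometric series:
$$
\sum_{\substack{r\geq 1\\ r~\mathrm{odd}}}\frac{d\log q}{q^{dr}} \;=\; d\log q\cdot\frac{q^{-d}}{1-q^{-2d}}.
$$
Since there are $\pi_q(d)$ monic irreducibles of degree exactly $d$, the left-hand side of the lemma equals
$$
\log q\sum_{d=1}^{m}\frac{d\,\pi_q(d)\,q^{-d}}{1-q^{-2d}}.
$$

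Next I would insert the Prime Number Theorem estimate \eqref{Prime_Number_Theorem}, $\pi_q(d)=q^d/d+O(q^{d/2}/d)$, which yields $d\,\pi_q(d)\,q^{-d}=1+O(q^{-d/2})$ with an implied constant independent of $d$. Combining this with $1/(1-q^{-2d})=1+O(q^{-2d})$, each summand becomes $1+O(q^{-d/2})$. Summing over $1\leq d\leq m$ and using that $\sum_{d\geq 1}q^{-d/2}$ converges (as $q\geq 3$), the sum is $m+O_q(1)$; multiplying through by $\log q$ gives $m\log q+O_q(1)$, as claimed.

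There is essentially no serious obstacle here; the one point deserving care is to verify that the Prime Number Theorem error term $O(q^{d/2}/d)$, once multiplied by $d$ and divided by $q^d$, produces a geometrically small, summable error $O(q^{-d/2})$ with constants uniform in $d$, so that the sum over $d\leq m$ contributes only the bounded quantity $O_q(1)$ and not something growing with $m$.
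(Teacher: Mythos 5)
Your proof is correct and uses essentially the same approach as the paper: both rely on the Prime Number Theorem estimate \eqref{Prime_Number_Theorem} to extract the main term $m\log q$ and on the geometric decay in $q^{-d/2}$ to absorb the error into $O_q(1)$. The only cosmetic difference is that you sum the geometric series over odd $r$ in closed form before applying the PNT, whereas the paper splits off the $r=1$ term as the main contribution and bounds the $r\geq 3$ tail separately; both routes are equally valid and lead to the same one-line estimate.
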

\begin{proof}
	Observe that
	\begin{equation}\label{Equation_Decomposition}
	\sum_{\substack{P \in \calP_{\leq m} \\ r \geq 1 \text{ odd}}} \frac{\log(q^{d_P})}{(q^{d_P})^r} = \sum_{P \in \calP_{\leq m}} \frac{\log(q^{d_P})}{q^{d_P}} + \sum_{\substack{P \in \calP_{\leq m} \\ r \geq 3 \text{ odd}}} \frac{\log(q^{d_P})}{(q^{d_P})^r}.
	\end{equation}
	The result follows from \eqref{Prime_Number_Theorem}, writing
	\begin{equation}\label{Equation_Sum_risone}
		\sum_{P \in \calP_{\leq m}} \frac{\log(q^{d_P})}{q^{d_P}} = \log(q)\sum_{j=1}^m \frac{j \pi_q(j)}{q^{j}} = m\log(q) + O_q(1),
	\end{equation}
	and bounding the second summand of \eqref{Equation_Decomposition} by $\log(q)\sum_{r=3}^\infty \sum_{j=1}^m j \pi_q(j)/q^{rj}$.
\end{proof}
\begin{lemma}\label{Lemma_Lower_Bound}
For any integer $m \geq 1$, we have
$$
\sum_{P \in \calP_{\leq m}} \frac{\log(q^{d_P})}{q^{d_P}} > (m -2.61)\log(q).
$$	
\end{lemma}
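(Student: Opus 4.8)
The plan is to make the error term $O_q(1)$ from Lemma \ref{Lemma_Sum_allr} explicit, and in fact to work directly with the sum $\sum_{P \in \calP_{\leq m}} \log(q^{d_P})/q^{d_P}$ as rewritten in \eqref{Equation_Sum_risone}. Writing $d_P = j$ and grouping primes by degree, we have
$$
\sum_{P \in \calP_{\leq m}} \frac{\log(q^{d_P})}{q^{d_P}} = \log(q) \sum_{j=1}^m \frac{j\,\pi_q(j)}{q^j}.
$$
So it suffices to show $\sum_{j=1}^m j\,\pi_q(j)/q^j > m - 2.61$ for every $m \geq 1$ and every odd prime power $q$. First I would use the explicit lower bound \eqref{Lower_Bound_Prime_Number_Theorem}, namely $\pi_q(j) \geq q^j/j - q^{j/2}/j - q^{j/3}$, which gives
$$
\frac{j\,\pi_q(j)}{q^j} \geq 1 - \frac{1}{q^{j/2}} - \frac{j}{q^{2j/3}}.
$$
Summing over $1 \leq j \leq m$ yields
$$
\sum_{j=1}^m \frac{j\,\pi_q(j)}{q^j} \geq m - \sum_{j=1}^m \frac{1}{q^{j/2}} - \sum_{j=1}^m \frac{j}{q^{2j/3}} > m - \sum_{j=1}^\infty \frac{1}{q^{j/2}} - \sum_{j=1}^\infty \frac{j}{q^{2j/3}}.
$$

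The remaining task is purely computational: bound the two convergent tails uniformly in $q \geq 3$. Both $\sum_{j\geq 1} q^{-j/2} = 1/(q^{1/2}-1)$ and $\sum_{j\geq 1} j q^{-2j/3} = q^{-2/3}/(1-q^{-2/3})^2$ are decreasing in $q$, so their worst case is $q = 3$. I would then check numerically that $1/(\sqrt3 - 1) + 3^{-2/3}/(1 - 3^{-2/3})^2 < 2.61$; indeed $1/(\sqrt3-1) \approx 1.366$ and the second term is roughly $1.18$, giving a sum safely below $2.61$ with a little room to spare. Combining this with the displayed inequality and multiplying through by $\log(q)$ gives the claim. (One should double-check the case $q=3$, $j$ small by hand, since \eqref{Lower_Bound_Prime_Number_Theorem} is only a bound; but as the inequality to be proved is strict and the slack is comfortable, the crude tail estimate already suffices, and for the smallest few $j$ the exact values of $\pi_q(j)$ only help.)

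The only mild obstacle is bookkeeping the numerical constant: one must confirm that the sum of the two geometric-type tails stays below $2.61$ for all $q$, not just asymptotically, which is why I reduce to the extremal value $q=3$ by monotonicity before estimating. Everything else is a direct substitution of the prime-counting lower bound \eqref{Lower_Bound_Prime_Number_Theorem} into \eqref{Equation_Sum_risone}.
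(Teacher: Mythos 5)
Your strategy is the same as the paper's: substitute the explicit lower bound \eqref{Lower_Bound_Prime_Number_Theorem} into \eqref{Equation_Sum_risone}, extend the two error sums to infinite geometric-type series, and check the extremal case $q=3$ by monotonicity in $q$. However, there is a decisive numerical error that breaks the argument. You correctly identify the second tail as $\sum_{j\geq 1} j\, q^{-2j/3} = q^{-2/3}/(1-q^{-2/3})^2$, but then evaluate it at $q=3$ as ``roughly $1.18$.'' In fact $3^{-2/3}\approx 0.4807$, so $3^{-2/3}/(1-3^{-2/3})^2 \approx 0.4807/0.2696 \approx 1.78$. (Equivalently, $q^{-2/3}/(1-q^{-2/3})^2 = q^{2/3}/(q^{2/3}-1)^2$, which at $q=3$ is about $2.080/1.167\approx 1.78$.) With the correct value, the total tail is $1/(\sqrt{3}-1) + 1.78 \approx 1.37 + 1.78 \approx 3.15$, which is \emph{not} below $2.61$. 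So the crude tail estimate, as you carry it out, does not establish the stated constant.

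It is worth noting that the corresponding displayed inequality in the paper's proof writes the second tail as $q^{1/3}/(q^{2/3}-1)^2$ (giving $f(3)\approx 2.602$); the numerator should be $q^{2/3}$, after which the same $3.15$ appears. The lemma's conclusion is nonetheless correct: for $q=3$ the actual deficit $\sum_{j\geq 1}(1 - j\pi_q(j)/q^j)$ can be computed directly and is only about $0.63$, well below $2.61$. To actually prove the $2.61$ constant by this route, the ``crude tail estimate suffices'' remark in your proposal is not enough; one must exploit the slack you mention parenthetically, e.g.\ use the exact value $\pi_q(1)=q$ (so the $j=1$ term contributes no deficit) and apply \eqref{Lower_Bound_Prime_Number_Theorem} only for $j\geq 2$. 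That replaces the two tails by $\sum_{j\geq 2}q^{-j/2} = 1/(q-q^{1/2})$ and $\sum_{j\geq 2} j q^{-2j/3}$, which at $q=3$ sum to roughly $0.79 + 1.30 \approx 2.09 < 2.61$, closing the gap.
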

\begin{proof}
	Using \eqref{Lower_Bound_Prime_Number_Theorem}, we have
	\begin{align*}
		\sum_{P \in \calP_{\leq m}} \frac{\log(q^{d_P})}{q^{d_P}}
		\geq \left(m - \frac{1}{q^{1/2}-1} - \frac{q^{1/3}}{(q^{2/3}-1)^2}\right)\log(q).
	\end{align*}
	We conclude because the function $f(x) = x^{1/3}/(x^{2/3}-1)^2 + 1/(x^{1/2}-1)$ is strictly decreasing for real $x > 1$ and maximized at $3$, in terms of odd prime powers, and $f(3) = 2.60233 \ldots$.
\end{proof}

Let $m, n \geq 1$ be integers. For each $P \in \calP_{\leq m}$, choose a value $\delta_P \in \{-1,1\}$ and let
$$
S(n, m, (\delta)_m) := \left\{ Q \in \calP_n \mid \text{ if } P \in \calP_{\leq m}, \text{ then } \left( \frac{P}{Q} \right) =  \delta_P \right\}.
$$
Recall that $\Pi_q(m) = \sum_{j=1}^m \pi_q(j)$ and $\mathscr{P}(m) = \prod_{P \in \calP_{\leq m}} P$.
\begin{proposition}\label{Proposition_Intermediate_Omega_Step}
	For integer $m \geq 1$ and large enough integer $n \geq  q^{m^{1/2}}$, we have
	$$
	\sum_{Q \in S(n, m, (\delta)_m)} \left(-\frac{L'(1,\chi_Q)}{L(1,\chi_Q)}\right) = \frac{\pi_q(n)}{2^{\Pi_q(m)}}
	\sum_{\substack{P \in \calP_{\leq m} \\ r \geq 1 \text{ odd}}} (-1)^{\frac{(q-1)n d_P}{2}} \delta_P  \frac{\log(q^{d_P})}{(q^{d_P})^r} + O_q\left( n^2 q^{\frac{n}{2} + 2m} \right).
	$$
\end{proposition}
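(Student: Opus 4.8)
The plan is to expand $-L'(1,\chi_Q)/L(1,\chi_Q)$ as a truncated Dirichlet polynomial over prime powers, to expand the indicator of the event $\{Q\in S(n,m,(\delta)_m)\}$ into a short linear combination of Jacobi symbols by quadratic reciprocity in $\Fq[t]$, then to interchange the two summations and evaluate the resulting sums $\sum_{Q\in\calP_n}(\text{character})$ by means of the proven Riemann Hypothesis. This is a function‑field transcription of Granville--Soundararajan's strategy; the key new manoeuvre is that reciprocity converts the conditions defining $S(n,m,(\delta)_m)$, which involve $Q$ as a \emph{modulus}, into a sum of Dirichlet characters \emph{evaluated at} $Q$.

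\textbf{Truncation and expansion of the indicator.} Writing $L(s,\chi_Q)=\prod_{i=1}^{n-1}(1-\alpha_i(\chi_Q)q^{-s})$ with $|\alpha_i(\chi_Q)|\le q^{1/2}$, one has $-L'(1,\chi_Q)/L(1,\chi_Q)=\sum_{f\in\calM_{\le n-1}}\Lambda(f)\chi_Q(f)/N(f)+O(n^2q^{-n/2})$, since the tail is bounded by $(n-1)\log q\sum_{k\ge n}q^{-k/2}$; summed over $Q\in S(n,m,(\delta)_m)$ this error is $\ll\pi_q(n)\,n^2q^{-n/2}\ll nq^{n/2}$, inside the target. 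Next, for $P\in\calP_{\le m}$ and $Q\in\calP_n$ we have $\deg Q=n>m\ge d_P$, so $\bigl(\tfrac PQ\bigr)=\pm1$, and reciprocity gives $\chi_Q(P)=\bigl(\tfrac QP\bigr)=(-1)^{\frac{(q-1)nd_P}{2}}\bigl(\tfrac PQ\bigr)$. Setting $\eta_P:=(-1)^{\frac{(q-1)nd_P}{2}}\delta_P$ and $\mathscr P_E:=\prod_{P\in E}P$ (squarefree), the membership condition factors as
\[
\mathbf{1}[Q\in S(n,m,(\delta)_m)]=\prod_{P\in\calP_{\le m}}\frac{1+\eta_P\bigl(\tfrac QP\bigr)}{2}=\frac{1}{2^{\Pi_q(m)}}\sum_{E\subseteq\calP_{\le m}}\Bigl(\prod_{P\in E}\eta_P\Bigr)\Bigl(\frac{Q}{\mathscr P_E}\Bigr).
\]

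\textbf{Interchange, a second reciprocity, and the main term.} Substituting both expansions and interchanging sums,
\[
\sum_{Q\in S(n,m,(\delta)_m)}\Bigl(-\frac{L'(1,\chi_Q)}{L(1,\chi_Q)}\Bigr)=\frac{1}{2^{\Pi_q(m)}}\sum_{E\subseteq\calP_{\le m}}\Bigl(\prod_{P\in E}\eta_P\Bigr)\sum_{f\in\calM_{\le n-1}}\frac{\Lambda(f)}{N(f)}\sum_{Q\in\calP_n}\Bigl(\frac{Q}{\mathscr P_E f}\Bigr)+O(nq^{n/2}).
\]
Every prime factor of $\mathscr P_E f$ has degree $<n$, so $Q\nmid\mathscr P_E f$ for all $Q\in\calP_n$; hence $\bigl(\tfrac{Q}{\mathscr P_E f}\bigr)\ne0$ and one more reciprocity step rewrites it as $(-1)^{\frac{(q-1)n\deg(\mathscr P_E f)}{2}}\chi_g(Q)$, where $g$ is the squarefree kernel of $\mathscr P_E f$. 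Now $\sum_{Q\in\calP_n}\chi_g(Q)=\pi_q(n)$ when $g=1$, i.e.\ when $\mathscr P_E f$ is a perfect square, and otherwise $\chi_g$ is nontrivial, its $L$-function has degree $\ll\deg g$ with all inverse zeros of modulus $q^{1/2}$, and $\sum_{Q\in\calP_n}\chi_g(Q)\ll(\deg g+n)q^{n/2}/n$. Since $\mathscr P_E$ is squarefree and $f=P^r$ is a prime power, $\mathscr P_E f$ is a perfect square precisely for $E=\{P\}$ with $r$ odd (then $\deg(\mathscr P_E f)=(r+1)d_P$ is even, the sign is $+1$, and $\prod_{P'\in E}\eta_{P'}=\eta_P$) and for $E=\emptyset$ with $r$ even. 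Summing the first family over $P\in\calP_{\le m}$ and odd $r$ produces exactly $\frac{\pi_q(n)}{2^{\Pi_q(m)}}\sum_{P\in\calP_{\le m},\,r\ge1\ \mathrm{odd}}(-1)^{\frac{(q-1)nd_P}{2}}\delta_P\,\log(q^{d_P})/(q^{d_P})^r$, the asserted main term; the diagonal family $E=\emptyset$ contributes only the further term $\frac{\pi_q(n)}{2^{\Pi_q(m)}}\sum_{P,\,r\ge2\ \mathrm{even}}\log(q^{d_P})/(q^{d_P})^r=O_q\bigl(\pi_q(n)/2^{\Pi_q(m)}\bigr)$, a bounded multiple of $\#S(n,m,(\delta)_m)$, which is tracked separately (and in the application to the omega results is absorbed after normalizing by $\#S(n,m,(\delta)_m)$).

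\textbf{Error term and main obstacle.} It remains to bound the non-square pairs $(E,f)$. Using $\tfrac{1}{2^{\Pi_q(m)}}\sum_E|\cdot|\le\max_E|\cdot|$, the bound just stated, the identities $\sum_{N(f)\le q^{n-1}}\Lambda(f)/N(f)=n-1$ and $\sum_{N(f)\le q^{n-1}}d_f\,\Lambda(f)/N(f)=\sum_{k\le n-1}k=\binom n2$ (the latter from $\sum_{f\in\calM_k}\Lambda(f)=q^k$), and $\deg\mathscr P(m)=\sum_{j\le m}j\,\pi_q(j)\ll q^m$, the non-square contribution is $\ll(q^m+n)\,q^{n/2}\ll n^2q^{n/2+2m}$. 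The main obstacle is precisely this final estimate: it must be uniform over the $2^{\Pi_q(m)}$ subsets $E$. The reason it holds is that each inner character sum $\sum_{Q\in\calP_n}\chi_g(Q)$ is controlled purely in terms of $\deg g\le\deg\mathscr P(m)+n-1$, which grows only polynomially in $q^m+n$ rather than in $2^{\Pi_q(m)}$, so the normalizing factor $1/2^{\Pi_q(m)}$ exactly compensates the number of subsets and no exponential loss occurs.
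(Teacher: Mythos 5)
Your approach is the same as the paper's: expand the indicator of membership in $S(n,m,(\delta)_m)$ as a character sum over subsets $E \subseteq \calP_{\leq m}$ (equivalently over $f \mid \mathscr{P}(m)$), expand $-L'(1,\chi_Q)/L(1,\chi_Q)$ as a short Dirichlet polynomial over prime powers, interchange, and split into perfect-square and non-square terms, using the Weil bound on the latter. The cosmetic differences are your truncation at degree $n-1$ via Proposition~\ref{Proposition_rthPower_LogDerivative} rather than the paper's truncation at $\rho(n,m)=2nd_{\mathscr{P}(m)}$ via Ihara's formula (both are fine; your choice has the side benefit that $d_P<n$ so the edge case $P=Q$ never occurs), and your double reciprocity, which is logically equivalent to the paper's single reciprocity applied to $\chi_Q(P^r)$.

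The significant difference is that you correctly identify \emph{two} square families, $E=\{P\}$ with $r$ odd and $E=\emptyset$ with $r$ even, whereas the paper's proof asserts ``$P^r f$ is a square if and only if $f = P$ and $r$ is odd.'' That claim is false: $f=1$, $r$ even also gives a square. Your diagonal family $E=\emptyset$ contributes
$$
\frac{\pi_q(n)}{2^{\Pi_q(m)}}\sum_{P,\ r\geq 2\ \mathrm{even}}\frac{\log(q^{d_P})}{(q^{d_P})^r}+O_q(1)
= \frac{\log q}{q-1}\cdot\frac{\pi_q(n)}{2^{\Pi_q(m)}}+O_q(1),
$$
which is a bounded multiple of $\#S(n,m,(\delta)_m) \asymp q^n/(n2^{\Pi_q(m)})$. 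In the parameter regime of Section~8 one has $2^{\Pi_q(m)}\leq (q^n)^{1/2-5\varepsilon}$ and $q^m\ll n\log n$, and then $\pi_q(n)/2^{\Pi_q(m)} \gg q^{n(1/2+5\varepsilon)}/n$, which is far larger than $n^2 q^{n/2+2m}$. So this term is not covered by the proposition's stated error, and your bookkeeping is the correct one; the paper's proof has a gap exactly where it discards the $f=1$, $r$ even family.

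Where you are too quick is the closing parenthetical that this extra term is ``absorbed after normalizing by $\#S(n,m,(\delta)_m)$ in the application.'' Tracing it through the paper's inequality \eqref{Latter_Inequality}: with the diagonal family included one gets $\sum_{Q \in S}\omega L'(1,\chi_Q)/L(1,\chi_Q) - A\,\#S = -\omega\frac{\log q}{q-1}\#S + O(\cdots)$ where $A$ is the odd-$r$ sum, so the decisive quantity becomes $\bigl(\varepsilon - \omega\frac{\log q}{q-1}\bigr)\#S$. For $\omega=+1$ this is negative whenever $\varepsilon < \log q/(q-1)$, which, since the proof fixes $\varepsilon<1/20$, happens for every odd prime power $q$ up to roughly $90$, and the counting argument produces nothing. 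So the extra term is not harmlessly normalized away; it perturbs the constant in Theorem~\ref{Theorem_Omega_Result} and must be tracked explicitly. In short: your proof is more careful than the paper's, but what both arguments actually prove is the proposition with an additional $\frac{\log q}{q-1}\,\pi_q(n)/2^{\Pi_q(m)}$ on the right-hand side, not the statement as printed.
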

\begin{proof}
	For each $f \in \calM$, let $\delta_f := \prod_{P \mid f} \delta_P$. For a given $Q \in \calP_n$, one has
	\begin{equation}\label{Equation_sum_Snepsilon}
		\sum_{f \mid \mathscr{P}(m)} \delta_f \chi_f(Q) = \prod_{P \in \calP_{\leq m}} \left( 1 + \delta_P \chi_P(Q) \right) = \begin{cases}
			2^{\Pi_q(m)} &\text{ if } Q \in S(n, m, (\delta)_m),\\
			0 &\text{ otherwise}.
		\end{cases}
	\end{equation}
	As shown in \cite{lumley_2019}*{Lemma 6.1}, for large enough $n \geq q^{m^{1/2}}$, we have
	\begin{equation}\label{Equation_Cardinality_Snm}
		\# S(n, m, (\delta)_m) = \frac{q^n}{2^{\Pi_q(m)}n} + O_q\left( q^{\frac{n}{2} + m}\right).
	\end{equation}
	
	For any real number $y > 1$ and any $Q \in \calP_n$ it follows from \cite{ihara_2008}*{Equation (6.8.4)} that 
	\begin{equation}\label{Ihara_Approx}
	-\frac{L'(1,\chi_Q)}{L(1,\chi_Q)} = \sum_{f \in \calM_{\leq \lfloor \log_q(y) \rfloor}} \frac{\chi_Q(f)\Lambda(f)}{N(f)} + O_q\left( \frac{\log(q^n)}{y^{\frac{1}{2}}}\right).
	\end{equation}
	
	Now, let $y := q^{2 n d_{\mathscr{P}(m)}}$ so that $\rho(n,m) := \log_q(y) = 2 nd_{\mathscr{P}(m)}$. From \eqref{Equation_Cardinality_Snm} we have
	\begin{equation}\label{Bound_Error_Ihara}
		\# S(n, m, (\delta)_m) \frac{\log(q^n)}{q^{n d_{\mathscr{P}(m)}}} \ll_q \frac{1}{q^{n (d_{\mathscr{P}(m)}-1)} 2^{\Pi_q(m)}}.
	\end{equation}
	
	By \eqref{Equation_sum_Snepsilon}, the quadratic reciprocity law, and $(-1)^{\frac{(q-1)n r d_P}{2}} = (-1)^{\frac{(q-1)n d_P}{2}}$ for $r \geq 1$ odd,
	\begin{equation}\label{Sum_Characters}
		\sum_{Q \in S(n, m, (\delta)_m)} \sum_{h \in \calM_{\leq \rho(n,m)}} \frac{\chi_Q(h) \Lambda(h)}{N(h)} = \frac{1}{2^{\Pi_q(m)}} \sum_{f \mid \mathscr{P}(m)} \delta_f \sum_{\substack{P \in \calP, r \geq 1 \\ r d_P\leq \rho(n,m)} } (-1)^{\frac{(q-1)n d_P}{2}} \frac{ \log(q^{d_P})}{(q^{d_P})^r} \sum_{Q \in \calP_n} \left( \frac{P^r f}{Q} \right).
	\end{equation}

	We now separately compute the contribution for squares and non-squares $P^rf$'s in \eqref{Sum_Characters}. Since $\mathscr{P}(m)$ is square-free, $f$ is too. Therefore, $P^r f$ is a square if and only if $f = P$ and $r$ is odd. Thus, the contribution of the squares is
	\begin{equation}\label{Equation_Square_Contribution_Main_Term}
		\frac{\pi_q(n)}{2^{\Pi_q(m)}} \sum_{P \in \calP_{\leq m}} (-1)^{\frac{(q-1)n d_P}{2}} \delta_P \log(q^{d_P}) \sum_{\substack{r \geq 1 \text{ odd} \\ r d_P \leq \rho(n,m)}} \frac{1}{(q^{d_P})^r}.
	\end{equation}
	Now, observe that for any $P \in \calP_{\leq m}$ we have
	$$
	\sum_{\substack{r \geq 1 \text{ odd}\\ r d_P > \rho(n,m)}} \frac{1}{(q^{d_P})^r} \leq \sum_{r = \left\lfloor \frac{\rho(n,m)}{d_P}+1 \right\rfloor}^\infty \frac{1}{(q^{d_P})^r} \ll_q \frac{1}{q^{\rho(n,m)+d_P}}.
	$$
	Therefore, using \eqref{Prime_Number_Theorem} and \eqref{Equation_Sum_risone}, we can rewrite \eqref{Equation_Square_Contribution_Main_Term} as
	\begin{align*}
		&\frac{\pi_q(n)}{2^{\Pi_q(m)}}\left( 
		\sum_{\substack{P \in \calP_{\leq m} \\ r \geq 1 \text{ odd}}} (-1)^{\frac{(q-1)n d_P}{2}} \delta_P \frac{\log(q^{d_P})}{(q^{d_P})^r}   + O_q\left( \frac{1}{q^{\rho(n,m)}}\sum_{P \in \calP_{\leq m}}\frac{d_P}{q^{d_P}}\right) \right)\nonumber\\
		&= \frac{\pi_q(n)}{2^{\Pi_q(m)}}
		\sum_{\substack{P \in \calP_{\leq m} \\ r \geq 1 \text{ odd}}} (-1)^{\frac{(q-1)n d_P}{2}} \delta_P  \frac{\log(q^{d_P})}{(q^{d_P})^r} + O_q\left( \frac{m}{n} \cdot \frac{1}{2^{\Pi_q(m)}q^{n(2 d_{\mathscr{P}(m)}-1)}} \right).\numberthis\label{Error_Omega_Squares}
	\end{align*}
	
Now, suppose $P^r f$ is not a square. The same argument as in \cite{lumley_2019}*{p.269} yields
	$$
	\sum_{Q \in \calP_n} \left( \frac{ P^r f}{Q} \right) \ll (r d_P + d_f - 1) q^{n/2} \ll_q \rho(n,m) q^{n/2}.
	$$
	From \eqref{Prime_Number_Theorem}, $\rho(n,m) \leq 2 n \sum_{j=1}^m j \pi_q(j) \ll_q n q^m$. This, together with a computation similar to Lemma  \ref{Lemma_Sum_allr}, imply that the contribution of the non-squares is
	\begin{equation}\label{Error_Term_Odd_Contribution_Omega}
	\ll_q \frac{\rho(n,m) q^{\frac{n}{2}}}{2^{\Pi_q(m)}} \sum_{f \mid \calP(m)} \sum_{\substack{P \in \calP, r \geq 1 \\ r d_P \leq \rho(n,m)}} \frac{d_P}{\left( q^{d_P} \right)^r}	\ll_q n^2 q^{\frac{n}{2} + 2m}.
	\end{equation}
The result follows by combining \eqref{Ihara_Approx}, \eqref{Bound_Error_Ihara}, \eqref{Sum_Characters}, \eqref{Error_Omega_Squares} and \eqref{Error_Term_Odd_Contribution_Omega}.
\end{proof}

We finally come to the proof of Theorem \ref{Theorem_Omega_Result}.

\begin{proof}[~Proof of Theorem \ref{Theorem_Omega_Result}]
	Let $\omega \in \{-1,1\}$. For each real number $\varepsilon > 0$, set
	$$
	G_{\varepsilon,\omega} := \left\{ Q \in S(n, m, (\delta)_m) : \omega \frac{L'(1,\chi_Q)}{L(1,\chi_Q)} \geq -\omega\sum_{\substack{P \in \calP_{\leq m} \\ r \geq 1 \text{ odd}}} (-1)^{\frac{(q-1) n d_P}{2}} \delta_P \frac{\log(q^{d_P})}{\left(q^{d_P}\right)^r} - \varepsilon \right\}.
	$$
	Consider
	\begin{equation}\label{Equation_Decomposition}
	\sum_{Q \in S(n, m, (\delta)_m)} \frac{L'(1,\chi_Q)}{L(1,\chi_Q)} = \sum_{Q \in G_{\varepsilon,\omega}} \frac{L'(1,\chi_Q)}{L(1,\chi_Q)} + \sum_{Q \notin G_{\varepsilon,\omega}} \frac{L'(1,\chi_Q)}{L(1,\chi_Q)}.
	\end{equation}
	For any integer $n \geq 1$, it follows from \eqref{Lemma_Ihara_Upper_Bound} that there is a constant $\theta_q > 0$ such that
	\begin{equation}\label{Equation_Ihara}
		\omega\frac{L'(1,\chi_Q)}{L(1,\chi_Q)} <  \theta_q\log(\log(q^n)).
	\end{equation}
	
	If $Q \notin G_{\varepsilon,\omega}$ we have
	\begin{equation}\label{Equation_Outside}
		\omega\frac{L'(1,\chi_Q)}{L(1,\chi_Q)} < -\omega\sum_{\substack{P \in \calP_{\leq m} \\ r \geq 1 \text{ odd}}} (-1)^{\frac{(q-1)n d_P}{2} n d_P} \delta_P \frac{\log(q^{d_P})}{\left(q^{d_P}\right)^r} - \varepsilon.
	\end{equation}
	Employing \eqref{Equation_Ihara}
	and \eqref{Equation_Outside} in \eqref{Equation_Decomposition}, we have
	\begin{align*}
		\sum_{Q \in S(n, m, (\delta)_m)} \left(\omega\frac{L'(1,\chi_Q)}{L(1,\chi_Q)}\right)
		&< \theta_q\log(\log(q^n)) \sum_{Q \in G_{\varepsilon,\omega}} 1\\
		&+ \left(-\omega\sum_{\substack{P \in \calP_{\leq m} \\ r \geq 1 \text{ odd}}} (-1)^{\frac{(q-1)n d_P}{2} n d_P} \delta_P \frac{\log(q^{d_P})}{\left(q^{d_P}\right)^r} - \varepsilon\right) \sum_{Q \notin G_{\varepsilon,\omega}} 1.
	\end{align*}
	
	For each $P \in \calP_{\leq m}$, choose $\delta_P = \omega (-1)^{\frac{(q-1)n d_P}{2}+1}$. Then, for $\varepsilon > 0$ small enough, we have
	$$
	\sum_{Q \in S(n, m, (\delta)_m)} \left(\omega\frac{L'(1,\chi_Q)}{L(1,\chi_Q)}\right)  < \theta_q\log(\log(q^n)) \sum_{Q \in G_{\varepsilon,\omega}} 1 + \left( \sum_{\substack{P \in \calP_{\leq m} \\ r \geq 1 \text{ odd}}}  \frac{\log(q^{d_P})}{\left( q^{d_P} \right)^r} - \varepsilon \right) \#S(n, m, (\delta)_m) 
	$$
	and so
	\begin{align*}
		\sum_{Q \in G_{\varepsilon,\omega}} 1 &> \frac{1}{\theta_q\log(\log(q^n))}\left( \sum_{Q \in S(n, m, (\delta)_m)} \left(\omega \frac{L'(1,\chi_Q)}{L(1,\chi_Q)}\right) - \sum_{\substack{P \in \calP_{\leq m} \\ r \geq 1 \text{ odd}}}  \frac{\log(q^{d_P})}{\left(q^{ d_P}\right)^r} \#S(n, m, (\delta)_m) \right)\numberthis\label{Latter_Inequality}\\
		&+ \frac{\varepsilon}{\theta_q\log(\log(q^n))}\#S(n, m, (\delta)_m).
	\end{align*}
	Assuming $n \geq q^{m^{1/2}}$, we successively apply Proposition \ref{Proposition_Intermediate_Omega_Step}, \eqref{Equation_Cardinality_Snm} and \eqref{Prime_Number_Theorem} in \eqref{Latter_Inequality} and get
	\begin{align*}
		\sum_{Q \in G_{\varepsilon,\omega}} 1
		&> \frac{\varepsilon }{\theta_q n\log(\log(q^n))}\frac{q^n}{2^{\Pi_q(m)}} +
		O_{q,\varepsilon}\left(\frac{1}{n\log(\log(q^n))}\frac{q^\frac{n}{2}}{2^{\Pi_q(m)}} \sum_{\substack{P \in \calP_{\leq m} \\ r \geq 1 \text{ odd}}}  \frac{\log(q^{d_P})}{\left( q^{d_P}\right)^r}\right) \\
		&+ O_{q,\varepsilon}\left( \frac{n^2 q^{\frac{n}{2} + 2m}}{\log(\log(q^n))}\right) + O_{q,\varepsilon}\left( \frac{q^{\frac{n}{2}+m}}{\log(\log(q^n))} \sum_{\substack{P \in \calP_{\leq m} \\ r \geq 1 \text{ odd}}} \frac{\log(q^{d_P})}{\left( q^{d_P}\right)^r} \right) + O_{q,\varepsilon}\left( \frac{ q^{\frac{n}{2}+m}}{\log(\log(q^n))} \right).
	\end{align*}
	Then using Lemma \ref{Lemma_Sum_allr}, we find
	\begin{equation}\label{Bounding_Below_Sum_of_1}
	\sum_{Q \in G_{\varepsilon,\omega}} 1 > \frac{\varepsilon \log(q)}{\theta_q\log(q^n)\log(\log(q^n))}\frac{q^n}{2^{\Pi_q(m)}} + O_{q,\varepsilon}\left( \frac{q^{\frac{n}{2}+2m}\log^2(q^n) }{\log(\log(q^n))}\right).
	\end{equation}
	
	We now suppose that
	\begin{equation}\label{Equation_Conditions_on_m}
		q^m \leq  \log(q^n) \text{ and } 2^{\Pi_q(m)} \leq \left( q^n \right)^{\frac{1}{2}-5\varepsilon}.
	\end{equation}
	Assuming $m$ large, the first condition implies that $n \geq q^{m^{1/2}}$. Since $\log(q^n) \ll_\varepsilon (q^n)^\varepsilon$ for any $\varepsilon > 0$, then, $q^{2m} 2^{\Pi_q(m)} \log^2(q^n)  \leq \left( q^n \right)^{1/2-5\varepsilon} \log^4(q^n) \ll_{q,\varepsilon} \left( q^n \right)^{1/2-\varepsilon}$, and so
	\begin{equation}\label{Equation_Uniformizing_Error_Term}
		\frac{q^{\frac{n}{2} + 2m}\log^2(q^n)}{\log(\log(q^n))} \ll_{q,\varepsilon} \frac{\left( q^n \right)^{1-\varepsilon}}{2^{\Pi_q(m)}}.
	\end{equation}
	Hence, employing the second assumption of \eqref{Equation_Conditions_on_m} and  \eqref{Equation_Uniformizing_Error_Term} in \eqref{Bounding_Below_Sum_of_1}, we can write
	\begin{equation}\label{Equation_Counting_Gdeltomega}
	\sum_{Q \in G_{\varepsilon,\omega}} 1 > \frac{\varepsilon \log(q)}{\theta_q\log(q^n)\log(\log(q^n))} \frac{q^n}{2^{\Pi_q(m)}} + O_{q,\varepsilon}\left( \frac{\left( q^n \right)^{1-\varepsilon}}{2^{\Pi_q(m)}} \right) \gg_{q,\varepsilon} \left( q^n \right)^{\frac{1}{2} + 4\varepsilon}.
	\end{equation}
	
	From \cite{lumley_2019}*{Lemma 2.1}, given $\varepsilon > 0$, we can choose $m \geq 1$ large enough such that
	\begin{equation}\label{Equation_degree_up_to_m}
		\Pi_q(m) \leq \frac{\zeta_q(2)(1 + \varepsilon)q^m}{\log(q^m)}.
	\end{equation}
	In light of \eqref{Equation_degree_up_to_m}, the condition $2^{\Pi_q(m)} \leq \left( q^n \right)^{\frac{1}{2}-5\varepsilon}$ of \eqref{Equation_Conditions_on_m} holds if we choose any large enough integer $m \geq 1$ satisfying
	$$
	\frac{q^m}{\log(q^m)} \leq  \frac{(1-10\varepsilon)\log(q^n)}{2\log(2)\zeta_q(2)(1+\varepsilon)}.
	$$
	
	Choosing $0 < \varepsilon < 1/20$ and then a constant $\alpha > 42\log(2)\zeta_q(2)$, guarantees there is an integer $m \geq 1$ such that
	\begin{equation}\label{Equation_Interval_for_qm}
		\frac{\log(q^n)\log(\log(q^n))}{q\left(2 \log(2) \zeta_q(2)(1+\varepsilon)+ \alpha \varepsilon\right)} < q^m < \frac{\log(q^n)\log(\log(q^n))}{2 \log(2) \zeta_q(2)(1+\varepsilon)+ \alpha \varepsilon}.
	\end{equation}
	
	Let $m \geq 1$ satisfying \eqref{Equation_Interval_for_qm}. For any $Q \in G_{\varepsilon,\omega}$ Lemma \ref{Lemma_Lower_Bound} gives
	\begin{equation}\label{Equation_Almost_Final_Step}
		\omega\frac{L'(1,\chi_Q)}{L(1,\chi_Q)} \geq \sum_{\substack{P \in \calP_{\leq m} \\ r \geq 1 \text{ odd}}} \frac{\log(q^{d_P})}{\left(q^{d_P}\right)^r} - \varepsilon
		\geq \sum_{P \in \calP_{\leq m}} \frac{\log(q^{d_P})}{q^{d_P}}  - \varepsilon
		> (m - 2.61)\log(q) - \varepsilon.
	\end{equation}
	Note that the lower bound of \eqref{Equation_Almost_Final_Step} is positive since $0 < \varepsilon < 1/20 < 0.39$. Applying \eqref{Equation_Interval_for_qm} to \eqref{Equation_Almost_Final_Step} gives
	\begin{align*}
		\omega\frac{L'(1,\chi_Q)}{L(1,\chi_Q)}	&> \left(m - 2.61\right)\log(q) - \varepsilon\\
		&> \log(\log(q^n)) + \log(\log(\log(q^n))) - 2.61\log(q)\\
		&-\log\left(2\log(2)\zeta_q(2)(1+\varepsilon)\right) - \log\left( 1 + \frac{\alpha \varepsilon}{2\log(2)\zeta_q(2)(1+\varepsilon)} \right).
	\end{align*}
	Since $\log(1+x) \leq x$ if $x > -1$, then
	\begin{align*}
	\omega\frac{L'(1,\chi_Q)}{L(1,\chi_Q)}	
		&> \log(\log(q^n)) + \log(\log(\log(q^n))) - 2.61\log(q) - \log(2\log(2)\zeta_q(2))\numberthis\label{Equation_Last_Inequality}\\
		&-\log(1+\varepsilon) - \frac{\alpha \varepsilon}{2\log(2)\zeta_q(2)(1+\varepsilon)}.
	\end{align*}
	
	From \eqref{Equation_Counting_Gdeltomega}, \eqref{Equation_Last_Inequality} and specializing $\omega$ to $-1$ and $1$, we conclude the proof.
\end{proof}

\textbf{Acknowledgment}

F\'elix Baril Boudreau thanks Hector Pasten for asking about the geometric meaning of $L'(1,\chi_D)/L(1,\chi_D)$ during the 2023 CMS Winter Meeting in Montr\'eal, Dinesh Thakur for useful conversations on the Chowla-Selberg formula and Taguchi heights of Drinfeld modules, and Fabien Pazuki for mentioning their result \cite{Angles_Armana_Bosser_Pazuki_2024}*{Proposition 4.18} on the logarithmic Weil height of the j-invariant of Drinfeld modules. The authors also thank Alexandra Florea for answering questions on her work, Youness Lamzouri for the encouragement to work out  omega results and Chantal David for useful feedback on the paper.

\begin{bibdiv}
\begin{biblist}
\bib{akbary_hamieh_2024}{article}{
				author = {Akbary, Amir},
				author = {Hamieh, Alia},
				title = {Value distribution of logarithmic derivatives of quadratic twists of automorphic {$L$}-functions},
				journal = {Q. J. Math.},
				volume = {75},
				date = {2024},
				number = {1},
				pages = {97--137},
				review = {\MR{4732947}},
				doi = {10.1093/qmath/haad042}
}
\bib{Andreatta_Eyal_Howard_Madapusi-Pera_2018}{article}{
author = {Andreatta, Fabrizio},
author = {Goren, Eyal Z.},
author = {Howard, Benjamin},
author = {Madapusi Pera, Keerthi},
title = {Faltings heights of abelian varieties with complex multiplication},
journal = {Ann. of Math. (2)},
volume = {187},
date = {2018},
number = {2},
pages = {391--531},
review = {\MR{3744856}},
doi = {10.4007/annals.2018.187.2.3}
}
\bib{Angles_Armana_Bosser_Pazuki_2024}{article}{
author = {Angl\`es, Bruno},
author = {Armana, C\'ecile},
author = {Bosser, Vincent},
author = {Pazuki, Fabien},
title = {Drinfeld singular moduli, hyperbolas, units}
journal = {arXiv:2404.01075v2 [math.NT}
doi = {10.48550/arXiv.2404.01075}
}
			\bib{altug_tsimerman_2014}{article}{
				author = {Altu\u{g}, Salim Ali},
				author = {Tsimerman, Jacob},
				title = {Metaplectic {R}amanujan conjecture over function fields with applications to quadratic forms},
				journal = {Int. Math. Res. Not. IMRN},
				date = {2014},
				number = {13},
				pages = {3465--3558},
				issn = {1073-7928, 1687-0247},
				review = {\MR{3229761}},
				doi = {10.1093/imrn/rnt047}
			}
			\bib{andrade_keating_2012}{article}{
				author = {Andrade, Julio C\'esar},
				author = {Keating, Jonathan P.},
				title = {The mean value of $L(1/2,\chi)$ in the hyperelliptic ensemble},
				journal={J. Number Theory},
				volume = {132},
				date={2012},
				number={3},
				pages={2793-2816},
				review={\MR{2965192}},
				doi={10.1016/j.jnt.2012.05.017}
			}
			\bib{Apostol_1951}{article}{
				author = {Apostol, T. M.},
				title = {On the {L}erch zeta function},
				journal = {Pacific J. Math.},
				volume = {1},
				date = {1951},
				pages = {161--167},
				review={\MR{43843}},
				url = {http://projecteuclid.org/euclid.pjm/1103052188}
			}
			\bib{billingsley_2012}{book}{
				author = {Billingsley, Patrick},
				title = {Probability and measure},
				edition= {Anniversary},
				series= {Wiley Series in Probability and Statistics},
				note = {With a foreword by Steve Lalley and a brief biography of
					Billingsley by Steve Koppes},
				publisher = {John Wiley \& Sons, Inc., Hoboken, NJ},
				date = {2012},
				pages = {xviii+624},
				review={\MR{2893652}},
			}
			\bib{bui_florea_2018}{article}{
				author = {Bui, H. M.},
				author = {Florea, Alexandra},
				title = {Hybrid Euler-Hadamard product for quadratic Dirichlet $L$-functions in function fields},
				journal={Proc. Lond. Math. Soc. (3)},
				volume = {117},
				date={2018},
				number={1},
				pages={65--99},
				issn={0024-6115,1460-244X},
				review={\MR{3830890}},
				doi={10.1112/plms.12123}
			}
			\bib{colmez_1993}{article}{
				author = {Colmez, Pierre},
				title = {P\'eriodes Des Vari\'et\'es Ab\'eliennes \`a Multiplication Complexe},
				journal = {Ann. Math.},
				volume = {138},
				date = {1993},
				number = {3},
				pages = {625--683},
				review = {\MR{1247996}},
				doi = {10.2307/2946559 }
			}
			
			\bib{David_Denis_1999}{article}{
			author = {David, Sinnou},
			author = {Denis, Laurent},
			title = {Isog\'enie minimale entre modules de {D}rinfel\cprime d},
			journal = {Math. Ann.},
			volume = {315},
			date = {1999},
			number = {1},
			pages = {97--140},
			review = {\MR{1717545}},
			doi = {10.1007/s002080050319}
			}
			\bib{faltings_1983}{article}{
				author = {Faltings, G.},
				title = {Endlichkeitss\"atze f\"ur abelsche {V}ariet\"aten \"uber
					{Z}ahlk\"orpern},
				journal = {Invent. Math.},
				volume = {73},
				date = {1983},
				number = {3},
				pages = {349--366},
				issn = {0020-9910,1432-1297},
				review = {\MR{718935}},
				doi = {10.1007/BF01388432}
			}
			\bib{folland_1999}{book}{
				author = {Folland, Gerald B.},
				title = {Real analysis},
				subtitle = {Modern techniques and their applications,
					A Wiley-Interscience Publication},
				series = {Pure and Applied Mathematics (New York)},
				edition = {Second},
				year = {1999},
				pages = {xvi+386},
				isbn = {0-471-31716-0},
				review={\MR{1681462}}
			}
			\bib{Granville_Soundararajan_2003}{article}{
			author = {Granville, A.},
			author = {Soundararajan, K.},
			title = {The distribution of values of {$L(1,\chi_d)$}},
			journal = {Geom. Funct. Anal.},
			volume = {13},
			date = {2003},
			number = {5},
			pages = {992--1028},
			review = {\MR{2024414}},
			doi = {10.1007/s00039-003-0438-3}
			}
			\bib{hamieh_mcclenagan_2022}{article}{
				author={Hamieh, Alia},
				author={McClenagan, Rory},
				title={The distribution of values of $\frac {L'}{L} (1/2 + \epsilon,
					\chi_D)$},
				journal={J. Number Theory},
				volume={238},
				date={2022},
				pages={1044--1062},
				issn={0022-314X},
				review={\MR{4430130}},
				doi={10.1016/j.jnt.2021.10.013},
			}
			\bib{Hashimoto_Iijima_Kurokawa_Masato_2004}{article}{
			author = {Hashimoto, Yasufumi},
			author = {Iijima, Yasuyuki},
			author = {Kurokawa, Nobushige},
			author = {Masato, Wakayama},
			title = {Euler's constants for the Selberg and the Dedekind zeta functions},
			journal = {Bull. Belg. Math. Soc. Simon Stevin},
			date = {2004},
			pages = {493--516},
			volume = {11},
			number = {4},
			review = {\MR{2115723}}
			doi = {10.36045/bbms/1102689119}
			}
			\bib{ihara_2006}{article}{
				author = {Ihara, Yasutaka},
				title = {On the Euler-Kronecker constants of global fields and primes with small norms},
				journal = {Progr. Math.},
				subtitle={In Algebraic geometry and number theory. In honor of Vladimir Drinfeld's 50th birthday. Edited by Victor Ginzburg},
				volume = {253},
				publisher = {Birkh\"auser Boston, Inc., Boston, MA},
				date = {2006},
				pages={407-451},
				review={\MR{2263195}},
				DOI = {10.1007/978-0-8176-4532-8\_5},
			}
			\bib{ihara_2008}{article}{
				author = {Ihara, Yasutaka},
				title = {On ``{$M$}-functions'' closely related to the distribution of
					{$L'/L$}-values},
				journal = {Publ. Res. Inst. Math. Sci.},
				volume = {44},
				date = {2008},
				number = {3},
				pages = {893--954},
				issn = {0034-5318,1663-4926},
				review = {\MR{2451613}},
				url = {http://projecteuclid.org/euclid.prims/1216238306},
			}
			\bib{kowalski_2021}{book}{
				author = {Kowalski, Emmanuel},
				title = {An introduction to probabilistic number theory},
				series={Cambridge Studies in Advanced Mathematics},
				volume={192},
				publisher={Cambridge University Press, Cambridge},
				date={2021},
				pages={xiv+255},
				isbn={978-1-108-84096-5},
				review={\MR{4274079}},
				doi={10.1017/9781108888226},
			}
			\bib{lamzouri_2015}{article}{
				author = {Lamzouri, Youness},
				title = {The distribution of Euler-Kronecker constants of quadratic fields},
				journal={J. Math. Anal. Appl.},
				volume = {432},
				date={2015},
				number={2},
				pages={632-653},
				review={\MR{3378382}},
				doi={10.1007/978-1-4757-6046-0}
			}
			\bib{lamzouri_lester_radziwill_2019}{article}{
				author={Lamzouri, Youness},
				author={Lester, Stephen},
				author={Radziwi\l\l, Maksym},
				title={Discrepancy bounds for the distribution of the Riemann
					zeta-function and applications},
				journal={J. Anal. Math.},
				volume={139},
				date={2019},
				number={2},
				pages={453--494},
				issn={0021-7670},
				review={\MR{4041109}},
				doi={10.1007/s11854-019-0063-1},
			}
			\bib{lamzouri_languasco_2023}{article}{
			author = {Lamzouri, Youness}, author = {Languasco, Alessandro},
			title = {Small values of {$|L'/L(1,\chi)|$}},
			journal = {Exp. Math.},
			volume = {32},
			date = {2023},
			number = {2},
			pages = {362--377},
			review = {\MR{4592953}},
			doi = {10.1080/10586458.2021.1927255}
			}
			\bib{loeve_1977}{book}{
				author = {Lo\`eve, Michel},
				title = {Probability theory. {I}},
				series = {Graduate Texts in Mathematics},
				volume = {Vol. 45},
				edition = {Fourth},
				publisher = {Springer-Verlag, New York-Heidelberg},
				date = {1977},
				pages = {xvii+425},
				review={\MR{651017}},
				doi={10.1007/978-1-4684-9464-8}
			}
			\bib{lumley_2019}{article}{
				author={Lumley, Allysa},
				title={Complex moments and the distribution of values of $L(1,\chi_D)$
					over function fields with applications to class numbers},
				journal={Mathematika},
				volume={65},
				date={2019},
				number={2},
				pages={236--271},
				issn={0025-5793},
				review={\MR{3884656}},
				doi={10.1112/s0025579318000396},
			}
			\bib{Mourtada_Murty_2013}{article}{
				author={Mourtada, M.},
				author={Kumar Murty, V.},
				title={Omega theorems for $\frac{L'}L(1,\chi_D)$},
				journal={Int. J. Number Theory},
				volume={9},
				date={2013},
				number={3},
				pages={561--581},
				issn={1793-0421},
				review={\MR{3043601}},
				doi={10.1142/S1793042112501485},
			}
			\bib{obus_2013}{article}{
			author = {Obus, Andrew},
			title = {On {C}olmez's product formula for periods of {CM}-abelian
					varieties},
			journal = {Math. Ann.},
			volume = {356},
			date = {2013},
			number = {2},
			pages = {401--418},
			review = {\MR{3048601}},
			doi = {10.1007/s00208-012-0855-4}
			}
			\bib{papikian_2023}{book}{
				author = {Papikian, Mihran},
				title = {Drinfeld modules},
				series = {Graduate Texts in Mathematics},
				volume = {296},
				publisher = {Springer, Cham},
				date = {2023},
				pages = {xxi+526},
				isbn = {978-3-031-19706-2; 978-3-031-19707-9},
				review={\MR{4592575}},
				doi = {10.1007/978-3-031-19707-9},
			}
			\bib{Rennie_Dobson_1969}{article}{
			author = {B.C. Rennie},
			author = {A.J. Dobson},
			title = {On stirling numbers of the second kind},
			journal = {J. Comb. Theory},
			volume = {7},
			number = {2},
			pages = {116-121},
			date = {1969},
			issn = {0021-9800},
			doi = {10.1016/S0021-9800(69)80045-1}
		}
			\bib{rosen_2002}{book}{
				author = {Rosen, Michael},
				title = {Number theory in function fields},
				series={Graduate Texts in Mathematics},
				volume = {210},
				publisher={Springer-Verlag, New York},
				date={2002},
				pages={xii+358},
				review={\MR{1876657}},
				doi={10.1007/978-1-4757-6046-0}
			}
			\bib{silverman_2009}{book}{
				author = {Silverman, Joseph H.},
				title = {The arithmetic of elliptic curves},
				series = {Graduate Texts in Mathematics},
				volume = {106},
				edition = {Second},
				publisher = {Springer, Dordrecht},
				journal = {Progr. Math.},
				date = {2009},
				pages = {xx+513},
				review={\MR{2514094}},
				doi = {10.1007/978-0-387-09494-6},
			}
			\bib{taguchi_1993}{article}{
				author = {Taguchi, Yuichiro},
				title = {Semi-simplicity of the {G}alois representations attached to
					{D}rinfeld modules over fields of ``infinite
					characteristics''},
				journal = {J. Number Theory},
				volume = {44},
				date = {1993},
				number = {3},
				pages = {292--314},
				issn = {0022-314X},
				review =  {\MR{1233291}},
				doi = {10.1006/jnth.1993.1055}
			}
			\bib{tenenbaum_1995}{book}{
				author = {Tenenbaum, G\'{e}rald},
				title = {Introduction to analytic and probabilistic number theory},
				series = {Cambridge Studies in Advanced Mathematics},
				volume = {46},
				edition = {Translated from French},
				publisher = {Cambridge University Press, Cambridge},
				date = {1995},
				pages = {xvi+448},
				isbn = {0-521-41261-7},
				review = {\MR{1342300}}
			}
			\bib{wei_2017}{article}{
				author = {Wei, Fu-Tsun},
				title = {Kronecker limit formula over global function fields},
				journal = {Amer. J. Math.},
				volume = {139},
				date = {2017},
				number = {4},
				pages = {1047--1084},
				review = {\MR{3689323}},
				doi={10.1353/ajm.2017.0027}
			}
			\bib{wei_2020}{article}{
				author = {Wei, Fu-Tsun},
				title = {On {K}ronecker terms over global function fields},
				journal = {Invent. Math.},
				volume = {220},
				date = {2020},
				number = {3},
				pages = {847--907},
				review = {\MR{4094971}},
				doi = {10.1007/s00222-019-00944-8}
			}
		\bib{yuan_zhang_2018}{article}{
		author = {Yuan, Xinyi},
		author = {Zhang, Shou-Wu},
		title = {On the averaged {C}olmez conjecture},
		journal = {Ann. of Math. (2)},
		volume = {187},
		date = {2018},
		number = {2},
		pages = {533--638},
		review = {\MR{3744857} and 2023 erratum \MR{4635306}},
		doi = {10.4007/annals.2018.187.2.4}
		}
		\end{biblist}
	\end{bibdiv}
\end{document}